\newtheorem {thm}{Theorem}
\newtheorem* {thm*}{Theorem}
\newtheorem {cor}[thm]{Corollary}
\newtheorem* {cor*}{Corollary}
\newtheorem {lem}[thm]{Lemma}
\newtheorem {prop}[thm]{Proposition}
\newtheorem* {prop*}{Proposition}
\theoremstyle{definition}
\newtheorem {rem}[thm]{Remark}
\newtheorem {defi}[thm]{Definition}
\newtheorem {exa}[thm]{Example}
\numberwithin{thm}{section}
\DeclareMathOperator{\Ann}{Ann}
\DeclareMathOperator{\Aut}{Aut}
\DeclareMathOperator{\End}{End}
\DeclareMathOperator{\Frac}{Frac}
\DeclareMathOperator{\Gal}{Gal}
\DeclareMathOperator{\Hom}{Hom}
\DeclareMathOperator{\GL}{GL}
\DeclareMathOperator{\SL}{SL}
\DeclareMathOperator{\Mat}{Mat}
\DeclareMathOperator{\id}{id}
\DeclareMathOperator{\tors}{tors}
\DeclareMathOperator{\kumm}{div}
\DeclareMathOperator{\im}{Im}
\DeclareMathOperator{\rk}{rk}
\DeclareMathOperator{\Ent}{Ent}
\newcommand{\Q}{\mathbb{Q}}
\newcommand{\R}{\mathbb{R}}
\newcommand{\NN}{\mathbb{N}}
\newcommand{\Z}{\mathbb{Z}}
\newcommand{\G}{\mathbb{G}}
\newcommand{\Kbar}{\overline{K}}
\newcommand{\Abar}{\overline{A}}
\newcommand{\set}[1]{\left\{ #1 \right\}}
\renewcommand{\geq}{\geqslant}
\renewcommand{\leq}{\leqslant}
\begin{document}

\title{Radical Entanglement for Elliptic Curves}
\author{Sebastiano Tronto}
\address[]{Department of Mathematics, University of Luxembourg, 6 av.\@ de la Fonte, 4364 Esch-sur-Alzette, Luxembourg}
\email{sebastiano.tronto@uni.lu}

\begin{abstract}
  Let $G$ be a commutative connected algebraic group over a number field $K$,
  let $A$ be a finitely generated and torsion-free subgroup of $G(K)$ of
  rank $r>0$ and, for $n>1$, let $K\left(n^{-1}A\right)$ be the smallest
  extension of $K$ inside an algebraic closure $\Kbar$ over which all the
  points $P\in G(\Kbar)$ such that $nP\in A$ are defined. We denote by $s$
  the unique non-negative integer such that $G(\overline K)[n]\cong (\Z/n\Z)^s$
  for all $n\geq 1$. We prove that, under certain conditions, the ratio between
  $n^{rs}$ and the degree $\left[K\left(n^{-1}A\right):K(G[n])\right]$ is
  bounded independently of $n> 1$ by a constant that depends only on the
  $\ell$-adic Galois representations associated with $G$ and on some arithmetic
  properties of $A$ as a subgroup of $G(K)/G(K)_{\tors}$. In particular we 
  extend the main theorems of \cite{lt} about elliptic curves to the case of
  arbitrary rank.
\end{abstract}

\maketitle

\section{Introduction}

\subsection{Setting}

Let $K$ be a number field and fix an algebraic closure $\overline{K}$ of $K$.
If $G$ is a commutative connected algebraic group over $K$ and $A$ is a finitely generated and torsion-free subgroup of $G(K)$, for any positive integer $n$ we may consider the field $K\left(n^{-1}A\right)$, that is the smallest extension of $K$ inside $\Kbar$ containing the coordinates of all points $P\in G(\Kbar)$ such that $nP\in A$. This is a Galois extension of $K$ containing the $n$-th torsion field $K(G[n])$ of $G$.

If $G=\mathbb{G}_m$ is the multiplicative group, such extensions are studied by classical Kummer theory. The more general case of an extension of an abelian variety by a torus is treated in Ribet's foundational paper \cite{ribet}. Under certain assumptions, for example if $G$ is the product of an abelian variety and a torus and $A$ has rank $1$, it is known that the ratio
\begin{align}
\label{eqn:firstRatio}
  \frac{n^{s}}{\left[K\left(n^{-1}A\right):K(G[n])\right]}
\end{align}
where $s$ is the unique positive integer such that $G(\Kbar)[n]\cong (\Z/n\Z)^s$
for all $n\geq 1$,
is bounded independently of $n$ (see also \cite[Th\'eor\`eme 5.2]{bertrand} and \cite[Lemme 14]{hindry}).

In \cite{lt} Lombardo and the author were able to give an effective bound for the ratio \eqref{eqn:firstRatio} if $G=E$ is an elliptic curve with $\End_K(E)=\Z$ and $A=\langle \alpha\rangle$ has rank $1$. Moreover, a uniform bound in the case $K=\Q$, under some necessary assumptions on the divisibility of $\alpha$ in $E(K)/E(K)_{\tors}$, was given.

The bounds given in \cite{lt} essentially depend on three properties of $E$ and $\alpha$:
\begin{enumerate}[(1)]
\item The finitess of the divisibility of $\alpha$ in $E(K)/E(K)_{\tors}$;
\item Properties of the $\ell$-adic Galois representations associated with $E$, for every prime $\ell$;
\item The finiteness of the exponent of $H^1(\Gal(K(E(\Kbar)_{\tors})\mid K),E(\Kbar)_{\tors})$.
\end{enumerate}

The goal of the present paper is twofold: firstly, we use the properties of $r$-extensions of abelian groups introduced by Palenstijn in \cite{pmaster} and \cite{palenstijn} to generalize the methods of \cite{lt} to groups $A$ of arbitrary finite rank and any commutative connected algebraic group $G$ that satisfies the same properties mentioned above. 
The result we obtain is the following (see Theorem \ref{thm:GeneralBoundEntanglement}):

\begin{thm}
\label{thm:main1intro}
Let $G$ be a commutative connected algebraic group over a number field $K$ and let $A\subseteq G(K)$ be a finitely generated and torsion-free subgroup of rank $r>0$. Let $s$ be the unique non-negative integer such that $G[n]\cong (\Z/n\Z)^s$ for all $n\geq 1$. Let $H$ denote, after a choice of basis, the image of the adelic Galois representation associated with $G$ over $K$
\begin{align*}
\Gal(\Kbar\mid K)\to\GL_s(\hat\Z).
\end{align*}
For every prime $\ell$, let $H_\ell$ denote the image of $H$ under the projection $\GL_s(\hat\Z)\to\GL_s(\Z_\ell)$ and denote by $\Z_\ell[H_\ell]$ the closed $\Z_\ell$-subalgebra of $\Mat_{s\times s}(\Z_\ell)$ generated by $H_\ell$. Assume that
\begin{enumerate}[(1)]
\item There is an integer $d_A\geq 1$ such that
\begin{align*}
d_A\cdot\{P\in G(K)\mid \exists n\in\NN_{\geq 1}:\,nP\in A\}\subseteq A+G(K)_{\tors}\,.
\end{align*}
\item There is an integer $N\geq 1$ such that $\Z_\ell[H_\ell]\supseteq N\Mat_{s\times s}(\Z_\ell)$ for every prime $\ell$.
\item There is an integer $M\geq 1$ such that the exponent of $H^1(\Gal(K_\infty\mid K),G(\Kbar)_{\tors})$ divides $M$, where $K_{\infty}=K(G(\Kbar)_{\tors})$.
\end{enumerate}
Then for every $n\geq 1$ the ratio
\begin{align*}
\frac{n^{rs}}{\left[K\left(n^{-1}A\right):K(G[n])\right]}
\end{align*}
divides $(d_ANM)^{rs}$.
\end{thm}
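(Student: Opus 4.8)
The plan is to rewrite the ratio as the order of a Kummer cokernel and then bound that cokernel by using the three hypotheses one at a time. First I would reduce to the case $n=\ell^{k}$ a prime power: since $G[n]=\bigoplus_{\ell}G[\ell^{v_\ell(n)}]$, the group $\Hom(A,G[n])$ and the torsion Galois data decompose over the primes (the bounded entanglement between torsion fields of different primes being absorbed into the constant $M$), so it suffices to bound the $\ell$-primary part of the ratio by a quantity independent of $k$ and then take the product over $\ell$. Next, fix a basis $\alpha_{1},\dots,\alpha_{r}$ of $A$ and points $\beta_{i}\in G(\Kbar)$ with $n\beta_{i}=\alpha_{i}$. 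The Kummer cocycle construction $\sigma\mapsto(\alpha_{i}\mapsto\sigma(\beta_{i})-\beta_{i})$ gives an injective homomorphism
\[
\kappa_{n}\colon\Gal\!\big(K(n^{-1}A)\mid K(G[n])\big)\hookrightarrow\Hom(A,G[n])\cong(\Z/n\Z)^{rs},
\]
which is equivariant for $\Gamma_{n}:=\Gal(K(G[n])\mid K)$ acting trivially on $A\subseteq G(K)$ and naturally on $G[n]$; one checks that $\#\coker\kappa_{n}$ is exactly the ratio in the statement. So it suffices to show that $\coker\kappa_{n}$ is a quotient of $(\Z/d_{A}NM\Z)^{rs}$, i.e.\ that it is generated by at most $rs$ elements and killed by $d_{A}NM$.

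\textbf{Removing the divisibility defect (hypothesis (1)).}
Let $\bar A:=\{P\in G(K):\exists\,m\geq1,\ mP\in A\}$; by hypothesis (1) this is finitely generated of rank $r$, it contains $A$, it is divisibly closed in $G(K)$, and $d_{A}\bar A\subseteq A+G(K)_{\tors}$. Comparing the Kummer maps of $A$ and of $\bar A$ through the inclusion $K(n^{-1}A)\subseteq K(n^{-1}\bar A)$ and the relation $d_{A}\bar A\subseteq A+G(K)_{\tors}$, one reduces the problem to bounding $\coker\kappa_{n}^{\bar A}$ up to a further factor dividing $d_{A}^{rs}$; the advantage of replacing $A$ by $\bar A$ is that there is no longer any obstruction coming from $A$ failing to be saturated. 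It then remains to show $\coker\kappa_{n}^{\bar A}$ is a quotient of $(\Z/NM\Z)^{rs}$.

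\textbf{Untwisting the torsion (hypothesis (3)) and exploiting the Galois image (hypothesis (2)).}
Since $G(\Kbar)$ is divisible, the long exact cohomology sequence attached to $0\to G[n]\to G(\Kbar)\xrightarrow{\,n\,}G(\Kbar)\to0$ gives a connecting map $\delta\colon\bar A\to H^{1}(K,G[n])$ that realises the Kummer extension of $\bar A$, and restriction to $\Gal(\Kbar\mid K(G[n]))$ turns $\delta$ into $\kappa_{n}^{\bar A}$ up to the finite Galois quotient. The discrepancy between working over $K$ and over $K(G[n])$ is measured, via inflation--restriction, by $H^{1}(\Gal(K(G[n])\mid K),G[n])$, which is killed by $M$ in view of hypothesis (3) (using $K_{\infty}\supseteq K(G[n])$ and the divisibility of $G(\Kbar)_{\tors}$); so we may work over $K(G[n])$ at the cost of a factor $M$. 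Passing now to the $\ell$-adic limit, the image $W$ of the Kummer map is a $\Z_{\ell}[H_{\ell}]$-submodule of $\Hom(\bar A\otimes\Z_{\ell},\,T_{\ell}G)\cong(\bar A\otimes\Z_{\ell})^{\ast}\otimes_{\Z_{\ell}}T_{\ell}G$, and the linear independence of $\alpha_{1},\dots,\alpha_{r}$ in $\bar A$ forces $W$ to be non-degenerate in the $\bar A$-direction; making this precise for $r>1$ is exactly where I would invoke Palenstijn's theory of $r$-extensions \cite{palenstijn}. Since $\Z_{\ell}[H_{\ell}]\supseteq\ell^{v_\ell(N)}\Mat_{s\times s}(\Z_{\ell})$ and $\Mat_{s\times s}(\Z_{\ell})$ carries any primitive vector onto all of $\Z_{\ell}^{\,s}$, such a $W$ contains $\ell^{v_\ell(N)}\cdot(\bar A\otimes\Z_{\ell})^{\ast}\otimes T_{\ell}G$; reducing modulo $\ell^{k}$ shows that $\coker\kappa_{n}^{\bar A}$ is killed by $\ell^{v_\ell(NM)}$ on each of its at most $rs$ coordinates, uniformly in $k$. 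Combining the three reductions, $\coker\kappa_{n}$ is a quotient of $(\Z/d_{A}NM\Z)^{rs}$, which gives the claimed divisibility.

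\textbf{The main obstacle.}
The crux is the last paragraph. One must formulate and prove the ``non-degeneracy in the $\bar A$-direction'' of the Kummer module in a form that is uniform in the prime-power exponent $k$, and then dovetail Palenstijn's $r$-extension machinery with the cohomological input so that the loss per coordinate is exactly $d_{A}NM$ and not larger; extracting from $\Z_{\ell}[H_{\ell}]\supseteq\ell^{v_\ell(N)}\Mat_{s\times s}(\Z_{\ell})$ a bound that does not deteriorate as $k\to\infty$ is the genuinely delicate point. By contrast, the reduction to prime powers, the passage to the saturation $\bar A$, and the inflation--restriction computation are comparatively routine.
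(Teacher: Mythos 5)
Your outline assigns the three hypotheses essentially the roles they play in the paper, but two steps do not work as stated, and the second is precisely the heart of the theorem.

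First, the reduction to prime powers over $K(G[n])$ goes the wrong way. Writing $n=\prod_\ell \ell^{k_\ell}$, the field $K(n^{-1}A)$ is the compositum of the $K(\ell^{-k_\ell}A)$, so $[K(n^{-1}A):K(G[n])]$ is \emph{at most} the product of the corresponding prime-power degrees; hence the ratio $n^{rs}/[K(n^{-1}A):K(G[n])]$ is \emph{at least} the product of the prime-power ratios, and bounding each $\ell$-part separately does not bound the whole. The cross-prime entanglement is not ``absorbed into $M$'' for free. The paper avoids this by working over $K_\infty=K(G_{\tors})$: the adelic Kummer image $V(A)\subseteq\Mat_{s\times r}(\hat\Z)$ is a closed subgroup of a product of pro-$\ell$ groups, hence literally equals $\prod_\ell V_\ell$ (so the Kummer layer over $K_\infty$ has no cross-prime entanglement), and Lemma \ref{lemma:entBoundsDegrees} transports the bound from $K_\infty$ back down to $K(G[n])$.

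Second, and more seriously, the final paragraph — which you yourself flag as the main obstacle — is where the actual argument lives, and the sketch you give (``non-degeneracy in the $\bar A$-direction'' plus ``$\Mat_{s\times s}(\Z_\ell)$ carries any primitive vector onto all of $\Z_\ell^s$'') would not yield the stated factor. The missing mechanism is a duality argument: hypotheses (1) and (3) together show (Proposition \ref{prop:JointKernelInH1}) that the joint kernel $\bigcap_{f\in V(A)}\ker f$, viewed inside $(\Q/\Z)^r$, is contained in the $d_AM$-torsion; Pontryagin duality (Proposition \ref{prop:HigherDimensionalPontryagin}) then converts this into the statement that the $\Mat_{s\times s}(\hat\Z)$-\emph{module generated by} $V(A)$ contains $d_AM\,\Mat_{s\times r}(\hat\Z)$. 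Only then does hypothesis (2) enter, to descend from the generated module back to $V(A)$ itself: since $V(A)$ is a closed $H$-module it is automatically a module over the closed algebra $\Z_\ell[H_\ell]$ (Lemma \ref{lem:topmodules}), whence $V(A)\supseteq \Z_\ell[H_\ell]\cdot V(A)\supseteq \ell^{v_\ell(N)}\Mat_{s\times s}(\Z_\ell)\cdot V(A)\supseteq \ell^{v_\ell(d_ANM)}\Mat_{s\times r}(\Z_\ell)$ at each prime. Your proposal has the algebra condition acting directly on the Kummer image to produce fullness, skipping the duality step that justifies why the generated module is large in the first place; without it there is no uniform-in-$k$ bound. (Relatedly, hypothesis (3) is used in the paper not via inflation--restriction over $K(G[n])$ but to show that a division point rational over $K_\infty$ forces near-divisibility in $G(K)$, which is what feeds the joint-kernel computation; your saturation to $\bar A$ is then unnecessary, as $d_A$ enters directly there.)
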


The first condition of Theorem \ref{thm:main1intro} is always satisfied if $G$ is an abelian variety or $G=\G_m$ (see Example \ref{exa:divisibilityParam}). We call such an integer $d_A$ a \emph{divisibility parameter} for $A$ in $G(K)$.  One has $d_A=1$ if, for example, the group $G(K)$ is finitely generated and torsion-free and $A=G(K)$.

Notice that if a set of generators for $A$ is known, modulo the torsion
subgroup of $G(K)$, in terms of a $\Z$-basis of
$G(K)/G(K)_{\tors}$, one can compute a divisibility parameter $d_A$. See section
\ref{subsec:dParam}.

Our second goal is to apply Theorem \ref{thm:main1intro} to some specific cases.
In particular, we generalize the results of \cite{lt} to the case of arbitrary rank. Theorems \ref{thm:main2intro} and \ref{thm:main3intro} below follow from Theorems \ref{thm:mainEC1}, \ref{thm:mainEC1cm} and \ref{thm:mainEC2} and Lemma \ref{lemma:entBoundsDegrees}.

\begin{thm}
\label{thm:main2intro}
Let $E$ be an elliptic curve over a number field $K$ such that $\End_K(E)=\Z$. Let $A$ be a finitely generated and torsion-free subgroup of $E(K)$ of rank $r>0$. 
There is an effectively computable integer $N>1$, depending only on $E$ and $K$, such that for every $n\geq 1$
\begin{align*}
\frac{n^{2r}}{\left[K\left(n^{-1}A\right):K(E[n])\right]}\quad \text{divides} \quad \left(d_AN\right)^{2r}
\end{align*}
where $d_A$ is a divisibility parameter for $A$ in $E(K)$.
\end{thm}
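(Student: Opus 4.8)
The plan is to obtain Theorem~\ref{thm:main2intro} as a specialization of the general Theorem~\ref{thm:main1intro} to $G=E$. Since $E[n]\cong(\Z/n\Z)^2$ for all $n$, here $s=2$ and $n^{rs}=n^{2r}$. Hypothesis~(1) of Theorem~\ref{thm:main1intro} holds automatically: $E$ is an abelian variety, so by Example~\ref{exa:divisibilityParam} there is a divisibility parameter $d_A$ for $A$ in $E(K)$, and this is exactly the $d_A$ occurring in the statement. It therefore suffices to produce, effectively and depending only on $E$ and $K$, integers $N'\geq 1$ and $M\geq 1$ verifying hypotheses~(2) and~(3); Theorem~\ref{thm:main1intro} then gives that $n^{2r}/[K(n^{-1}A):K(E[n])]$ divides $(d_AN'M)^{2r}$, and one takes $N:=\max(2,N'M)$ so that this quantity divides $(d_AN)^{2r}$ and $N>1$.

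For hypothesis~(2) one analyzes the $\ell$-adic images $H_\ell\subseteq\GL_2(\Z_\ell)$. Outside a finite and effectively determined set of ``exceptional'' primes, the reduction of $H_\ell$ modulo $\ell$ already spans $\Mat_{2\times 2}(\mathbb{F}_{\ell})$ as an $\mathbb{F}_{\ell}$-algebra --- by Serre's open image theorem when $\End_{\overline K}(E)=\Z$, and by the structure of the normalizer of a Cartan subgroup when $E$ has complex multiplication over $\overline K$ but not over $K$ (so that $H_\ell$ is not contained in the Cartan) --- and then a Nakayama-type argument gives $\Z_\ell[H_\ell]=\Mat_{2\times 2}(\Z_\ell)$, so one may take $v_\ell(N')=0$. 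For each of the finitely many remaining primes $\ell$ one still has $\Z_\ell[H_\ell]\otimes_{\Z_\ell}\Q_\ell=\Mat_{2\times 2}(\Q_\ell)$, because $H_\ell$ is open in $\GL_2(\Z_\ell)$ or open in the normalizer of a Cartan subgroup (and not contained in the Cartan), and in either case an open subgroup already spans the full matrix algebra over $\Q_\ell$; hence $\Z_\ell[H_\ell]\supseteq\ell^{k_\ell}\Mat_{2\times 2}(\Z_\ell)$ for some $k_\ell\geq 0$, and $N':=\prod_\ell\ell^{k_\ell}$ over these primes completes the verification. Effectivity comes from an explicit form of Serre's theorem bounding both the exceptional primes and the indices $k_\ell$ in terms of $E$ and $K$.

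For hypothesis~(3) one needs an effective bound $M$ on the exponent of $H^1(\Gal(K_\infty\mid K),E(\overline K)_{\tors})$, with $K_\infty=K(E(\overline K)_{\tors})$. This is precisely the third of the three ingredients already isolated in \cite{lt}, and its finiteness --- together with an explicit value --- follows from the largeness of the torsion representation established for hypothesis~(2): an inflation--restriction argument reduces the computation to cohomology of the image $H\subseteq\GL_2(\hat\Z)$ with coefficients in $(\Q/\Z)^2$, which is annihilated by a bounded integer via a Sah-type lemma applied to a central (scalar) element of $H$, up to the contribution of the finitely many exceptional primes, which can again be bounded effectively.

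The main obstacle is exactly the effectivity required in hypothesis~(2): one needs a quantitatively explicit version of Serre's open image theorem, that is, an effective upper bound --- in terms of $E$ and $K$ only --- on the finitely many primes $\ell$ for which $H_\ell$ fails to span $\Mat_{2\times 2}(\Z_\ell)$ and on the corresponding exponents $k_\ell$. By contrast, the reduction to Theorem~\ref{thm:main1intro}, hypothesis~(1), the Nakayama and Sah-type arguments, and the passage from entanglement statements to degree statements (Lemma~\ref{lemma:entBoundsDegrees}) are all comparatively formal.
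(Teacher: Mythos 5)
Your proposal is correct and follows essentially the same route as the paper: reduce to the general entanglement bound (Theorem~\ref{thm:main1intro}), get $d_A$ from the Mordell--Weil theorem, verify the matrix-algebra hypothesis via effective open-image results (Serre in the non-CM case, normalizers of Cartan subgroups when $E$ has CM over $\Kbar$ but not over $K$), and kill $H^1(\Gal(K_\infty\mid K),E_{\tors})$ with Sah's lemma applied to a central scalar. The paper merely packages the two cases as separate theorems (Theorems~\ref{thm:mainEC1} and~\ref{thm:mainEC1cm}) and outsources the explicit algebra computations at exceptional primes to \cite[Proposition 4.12]{lt}, but the substance is the same.
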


\begin{thm}
\label{thm:main3intro}
There is a universal constant $C\geq 1$ such that for every elliptic curve $E$ over $\Q$, for every torsion-free subgroup $A$ of $E(\Q)$ and for every $n\geq 1$
\begin{align*}
  \frac{n^{2\rk(A)}}{\left[\Q\left(n^{-1}A\right):\Q(E[n])\right]}\quad \text{divides} \quad \left(d_AC\right)^{2\rk(A)}
\end{align*}
where $d_A$ is a divisibility parameter for $A$ in $E(\Q)$.
\end{thm}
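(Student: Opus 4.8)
The plan is to obtain Theorem~\ref{thm:main3intro} from Theorem~\ref{thm:main1intro}, applied with $G=E$ and $K=\Q$, by checking that the integers $N$ and $M$ of hypotheses (2) and (3) of that theorem can be chosen independently of the elliptic curve $E/\Q$. We may assume $r:=\rk(A)>0$, since for $A=0$ the ratio is $1$. Since $E$ is an abelian variety, hypothesis (1) holds with some divisibility parameter $d_A$ by Example~\ref{exa:divisibilityParam}, and $E[n]\cong(\Z/n\Z)^2$ for every $n$, so $s=2$. Granting uniform choices of $N$ and $M$, Theorem~\ref{thm:main1intro} yields that $n^{2r}/\bigl[\Q(n^{-1}A):\Q(E[n])\bigr]$ divides $(d_ANM)^{2r}$ for every $n\geq 1$, and the statement follows with $C:=NM$.

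For the uniform $M$ one must bound, independently of $E$, the exponent of $H^1(\Gal(\Q_\infty\mid\Q),E(\qbar)_{\tors})$ with $\Q_\infty=\Q(E(\qbar)_{\tors})$. For elliptic curves over $\Q$ these cohomology groups are trivial for all but finitely many primes and, at each remaining prime, are killed by a bounded power of it (the uniform form of ingredient (3) of \cite{lt}); one then takes $M$ to be the least common multiple of the finitely many local bounds.

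The uniform $N$ is the heart of the matter: we need $N\geq 1$ with $\Z_\ell[H_\ell]\supseteq\ell^{v_\ell(N)}\Mat_{2\times 2}(\Z_\ell)$ for every prime $\ell$, uniformly over $E/\Q$, where $H_\ell=\rho_{E,\ell}(\Gal(\qbar\mid\Q))$. The key point is that $V_\ell E=T_\ell E\otimes_{\Z_\ell}\Q_\ell$ is an irreducible $\Q_\ell[\Gal(\qbar\mid\Q)]$-module for every $E/\Q$ and every $\ell$: a stable $\Z_\ell$-line in $T_\ell E$ would produce $\Q$-rational cyclic isogenies of degree $\ell^n$ for all $n$, impossible for non-CM curves by the determination of the rational points of the modular curves $X_0(\ell^n)$ (Mazur and Kenku), and impossible for CM curves because then the imaginary quadratic CM field is not contained in $\Q$ and the action of the corresponding quadratic subgroup of $\Gal(\qbar\mid\Q)$ rules out all candidate lines coming from the Cartan subgroup. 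By the Jacobson density theorem $\Q_\ell[H_\ell]=\Mat_{2\times 2}(\Q_\ell)$, so $\Z_\ell[H_\ell]$ is a $\Z_\ell$-order in $\Mat_{2\times 2}(\Q_\ell)$ and there is a least integer $v_\ell(N)\geq 0$ with $\ell^{v_\ell(N)}\Mat_{2\times 2}(\Z_\ell)\subseteq\Z_\ell[H_\ell]$. For all primes $\ell$ outside an explicit finite set, the mod-$\ell$ image of $E$ is surjective in the non-CM case (Serre's uniformity over $\Q$, now known through the determination of rational points on the relevant modular curves) and is the full normalizer of a Cartan subgroup in the CM case; in either case its $\mathbb{F}_\ell$-linear span is all of $\Mat_{2\times 2}(\mathbb{F}_\ell)$, whence $\Z_\ell[H_\ell]=\Mat_{2\times 2}(\Z_\ell)$ by Nakayama's lemma and $v_\ell(N)=0$. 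For the finitely many remaining primes one uses the classification of the possible mod-$\ell$ and $\ell$-adic images of elliptic curves over $\Q$ (a finite list of conjugacy classes; for CM curves, moreover, only finitely many $j$-invariants occur and the images are known up to quadratic twist), together with the fact that none of these images lies in a Borel subgroup of $\GL_2(\Z_\ell)$, to bound $v_\ell(N)$ at each; one sets $N=\prod_\ell\ell^{v_\ell(N)}$.

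I expect the main obstacle to be this uniform choice of $N$. For CM curves the image $H_\ell$ lies inside the normalizer of a Cartan subgroup, of infinite index in $\GL_2(\Z_\ell)$, so one genuinely has to use that hypothesis (2) of Theorem~\ref{thm:main1intro} concerns the associative $\Z_\ell$-algebra generated by $H_\ell$ and not $H_\ell$ itself; and the universal irreducibility of $V_\ell E$ rests on the deep theorems of Mazur and Kenku on rational isogenies. With irreducibility and these uniformity inputs in hand, what remains is a finite case analysis organised by the known classifications of Galois images of elliptic curves over $\Q$.
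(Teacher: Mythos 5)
Your overall architecture matches the paper's: Theorem \ref{thm:main3intro} is deduced from the general bound (Theorem \ref{thm:GeneralBoundEntanglement}, i.e.\ Theorem \ref{thm:main1intro}) by exhibiting uniform choices of $N$ and $M$ over $\Q$, and your treatment of $M$ (a uniform exponent bound for $H^1(\Gal(\Q_\infty\mid\Q),E_{\tors})$, which is \cite[Theorem 1.3]{lt}) is exactly what the paper does. You also correctly identify the uniform $N$ as the crux.

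However, your justification of the uniform $N$ rests on two inputs that are not theorems. First, Serre's uniformity over $\Q$ is \emph{not} ``now known'': the case of mod-$\ell$ image contained in the normalizer of a non-split Cartan subgroup remains open for general $\ell$. Second, there is no established complete finite classification of the possible mod-$\ell$ and $\ell$-adic images of elliptic curves over $\Q$ on which to run your ``finite case analysis'' at the remaining primes; for a fixed small $\ell$, the Jacobson-density argument gives some $v_\ell(N)$ for each individual curve, but nothing in your argument makes it uniform in $E$. The paper's route avoids both issues: for non-CM curves it invokes \cite[Corollary 3.13]{lt}, which rests on Arai's theorem (\cite[Theorem 1.2]{arai}) giving, for each fixed $\ell$, a parameter of maximal growth $n_\ell$ with $H_\ell(E)\supseteq I+\ell^{n_\ell}\Mat_{2\times 2}(\Z_\ell)$ \emph{uniformly in $E/\Q$} (whence $\Z_\ell[H_\ell]\supseteq\ell^{n_\ell}\Mat_{2\times 2}(\Z_\ell)$ directly, with $n_\ell=0$ outside a finite set); for CM curves it uses the finiteness of CM $j$-invariants over $\Q$ together with the Cartan-algebra computation of Theorem \ref{thm:mainEC1cm} and \cite[Proposition 4.12]{lt}. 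Note also that full surjectivity is not the right target anyway: the hypothesis only concerns the algebra $\Z_\ell[H_\ell]$, and for large $\ell$ one gets $\Z_\ell[H_\ell]=\Mat_{2\times 2}(\Z_\ell)$ from irreducibility (Mazur) plus the fact that the image is not contained in a Cartan subgroup, exactly the kind of argument you sketch with Nakayama --- but the finitely many remaining primes genuinely require Arai's uniform stabilization result, not a classification of images. As written, your proof has a gap at this point.
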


\subsection{Notation}

If $A$ is an abelian group and $n$ is a positive integer we denote by $A[n]$ the subgroup of the elements of $A$ of order dividing $n$. We denote by $A_{\tors}$ the subgroup consisting of all elements of $A$ of finite order. We denote by $\rk(A)$ the \emph{rank} of $A$, that is the dimension of $A\otimes_{\Z}\Q$ as a $\Q$-vector space.

If $R$ is a commutative ring, then we denote by $\Mat_{n\times m}(R)$ the $R$-module of $n\times m$ matrices with entries in $R$, which we regard as an $R$-algebra if $n=m$.
If at least one between $n$ and $m$ is zero then $\Mat_{n\times m}(R)$ is the trivial ring (or trivial $R$-algebra if $n=m=0$).
For $n>0$ we denote by $\GL_n(R)$ the group of invertible $n\times n$ matrices with entries in $R$.

For any prime number $\ell$ and any non-zero integer $n$ we denote by $v_\ell(n)$ the $\ell$-adic valuation of $n$. We denote by $\Z_\ell$ the ring of $\ell$-adic integers and by $\hat\Z$ the ring of profinite integers, which we identify with the product $\prod_\ell\Z_\ell$.

If $K$ is a number field and $\Kbar$ is a fixed algebraic closure of $K$, we denote by $\zeta_n$ a primitive $n$-th root of unity in $\Kbar$, for any positive integer $n$. If $G$ is any algebraic group over $K$ and $L$ is any field extension of $K$, we denote by $G(L)$ the group of $L$-points of $G$. If $S$ is a subset of $G(\Kbar)$, we denote by $K(S)$ the subfield of $\Kbar$ whose elements are fixed by
\begin{align*}
H=\left\{g\in \Gal(\Kbar\mid K)\mid g(P)=P\quad\forall\, P\in S\right\}.
\end{align*}
If $G$ is embedded in an affine or projective space (notice that, as a
consequence of Chevalley's structure theorem, any algebraic group over a
field is quasi-projective) then $K(S)$ coincides with the field generated by
$K$ and any choice of affine coordinates of all points $P\in S$.

\subsection{Structure of the paper}

After some necessary group-theoretic preliminaries in Section~ \ref{sec:preliminaries}, we investigate in Section \ref{sec:sExtensions} the theory of $s$-extensions of abelian groups introduced by Palenstijn. Much of the content of that section can be found, with little differences, in \cite{pmaster}.

We then move on to prove some $\hat\Z$-linear algebra results in Section \ref{sec:LinearAlgebra}, and finally develop our theory of entanglement for commutative algebraic groups in Section \ref{sec:GeneralTheory}. In Section \ref{sec:ellipticNoCM} we apply this theory to the case of elliptic curves without complex multiplication.

\subsection{Acknowledgements}
I am grateful to my advisors Antonella Perucca and Peter Bruin for their constant support during the preparation of this paper. I am also very grateful to Hendrik Lenstra and Peter Stevenhagen for giving me some of the main ideas for this work.

\section{Group-theoretic preliminaries}
\label{sec:preliminaries}

We collect here some basic group-theoretic results that we will need throughout this paper.

\subsection{Pontryagin duality}

Let $G$ be a locally compact Hausdorff topological abelian group. Let $S^1=\R/\Z$ with the usual topology. The group  $\Hom(G,S^1)$ of continuous homomorphisms from $G$ to $S^1$ endowed with the compact-open topology is itself a locally compact abelian group, and it is called the \emph{group of characters} or the \emph{(Pontryagin) dual} of $G$ (see \cite[Chapter~6]{pontryagin}). We will denote it by $G^\wedge$.

\begin{exa}
\label{exa:dualOfQmodZ}
Consider $\Q/\Z$ as a topological group with the discrete topology. We have $(\Q/\Z)^\wedge\cong \hat\Z$. To see this, notice first that for every positive integer $n$ there is a natural isomorphism
\begin{align*}
\Hom\left(\frac{\frac{1}{n}\Z}{\Z},\Q/\Z\right)\cong \Z/n\Z
\end{align*}
given by sending a homomorphism $\varphi:\frac{1}{n}\Z/\Z\to \Q/\Z$ to the unique $d\in \Z/n\Z$ such that $\varphi\left(\frac{1}{n}\right)=\frac{d}{n}$.
Now we have
\begin{align*}
\Hom(\Q/\Z,S^1)&=\Hom(\Q/\Z,\Q/\Z)\cong\\
&\cong \Hom\left(\varinjlim_n\frac{\frac{1}{n}\Z}{\Z},\Q/\Z\right)\cong\\
&\cong \varprojlim_n\Hom\left(\frac{\frac{1}{n}\Z}{\Z},\Q/\Z\right)\cong\\
&\cong \varprojlim_n \Z/n\Z.
\end{align*}
The maps forming this last projective system are just the natural projections, since for $n\mid m$ the restriction of 
\begin{align*}
\varphi:\Z/m\Z&\to\Q/\Z\\
\frac{1}{m}&\mapsto \frac{d}{m}
\end{align*}
to $\Z/n\Z$ maps $\frac{1}{n}$ to $\frac{d}{n}$. So we get $\Hom(\Q/\Z,S^1)\cong \hat \Z.$
\end{exa}

\begin{rem}
\label{rem:HigherDimPontryagin}
In Section \ref{sec:LinearAlgebra} we will need a higher-dimensional analogue of Example \ref{exa:dualOfQmodZ}. By the previous example we easily deduce that, for $r,s\geq 1$, the group $\Hom((\Q/\Z)^r,(\Q/\Z)^s)$ can be identified with $\Mat_{s\times r}(\hat\Z)$. This can be seen directly on the finite level as follows: let
\begin{align*}
\begin{array}{cccc}
\varphi:&\left(\dfrac{\frac{1}{n}\Z}{\Z}\right)^r&\to&\left(\dfrac{\frac{1}{n}\Z}{\Z}\right)^s\\
& \left(\frac{1}{n},0,\dots,0\right)&\mapsto& \left(\frac{d_{11}}{n},\frac{d_{21}}{n},\dots,\frac{d_{s1}}{n}\right)\\
& \left(0,\frac{1}{n},\dots,0\right)&\mapsto& \left(\frac{d_{12}}{n},\frac{d_{22}}{n},\dots,\frac{d_{s2}}{n}\right)\\
& \vdots & & \vdots\\
& \left(0,0,\dots,\frac{1}{n}\right)&\mapsto& \left(\frac{d_{1r}}{n},\frac{d_{2r}}{n},\dots,\frac{d_{sr}}{n}\right)
\end{array}
\end{align*}
be a group homomorphism. The matrix $D_\varphi=(d_{ij})\in \Mat_{s\times r}(\Z/n\Z)$ completely describes the homomorphism $\varphi$, and the map $\varphi\mapsto D_\varphi$ is easily checked to be a group isomorphism between $\Hom((\frac{1}{n}\Z/\Z)^r,(\frac{1}{n}\Z/\Z)^s)$ and $\Mat_{s\times r}(\Z/n\Z)$. Passing to the limit in $n$ we obtain a description of the natural isomorphism $\Hom((\Q/\Z)^r,(\Q/\Z)^s)\cong \Mat_{s\times r}(\hat\Z)$.

Furthermore, if $r=s$ the map $\varphi\mapsto D_\varphi$ is a ring homomorphism from $\End((\Q/\Z)^s)$ to $\Mat_{s\times s}(\hat\Z)$. This allows us to identify $\Aut((\Q/\Z)^s)=\End((\Q/\Z)^s)^{\times}$ with $\GL_s(\hat\Z)$.
\end{rem}

\begin{thm}[{Pontryagin duality, see {\cite[Theorems 39 and 40]{pontryagin}}}]
\label{thm:Pontryagin}
The functor $\Hom(-,S^1)$ that maps $G$ to its dual $G^\wedge$ is an anti-equivalence of the category of locally compact Hausdorff topological abelian groups with itself. Moreover $(G^\wedge)^\wedge$ is naturally isomorphic to $G$.

This anti-equivalence induces an inclusion-reversing bijection between the closed subgroups of any locally compact topological abelian group $G$ and those of $G^\wedge$, given by
\begin{align*}
\begin{array}{ccc}
  \set{\text{closed subgroups of }G} & \longleftrightarrow & \set{\text{closed subgroups of }G^\wedge}\\[0.4em]
  U &\longmapsto & \Ann U:=\set{f\in G^\wedge\mid f(u)=0\,\forall\, u\in U}\\[0.4em]
\set{g\in G\mid f(g)=0\,\forall f\in V}=:\Ann V& \longmapsfrom & V
\end{array}
\end{align*}
Moreover, $G$ is discrete if and only if $G^\wedge$ is compact, and $G$ is discrete and torsion if and only if $G^\wedge$ is profinite.
\end{thm}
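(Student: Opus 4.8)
This is the classical Pontryagin duality theorem, and a complete proof is the content of \cite[Chapter~6]{pontryagin}; here I only sketch the structure of such a proof. The plan is to reduce everything to a short list of building blocks and then bootstrap by exactness of the duality functor. First one computes the dual in the basic cases, namely $\R^\wedge\cong\R$, $\Z^\wedge\cong S^1$, $(S^1)^\wedge\cong\Z$ and $(\Z/n\Z)^\wedge\cong\Z/n\Z$ (this is essentially the Fourier transform, resp.\ Fourier series, computation, in the spirit of Example~\ref{exa:dualOfQmodZ}), and observes that $\Hom(-,S^1)$ turns finite products into finite products. The structural input that allows one to pass from these cases to the general one is the theorem that every locally compact Hausdorff abelian group $G$ is isomorphic to $\R^n\times G_0$ where $G_0$ contains a compact open subgroup, and that such a $G_0$ fits into an extension of a discrete group by a compact group; together with the exactness discussed below, this reduces the whole theorem to the cases where $G$ is compact or discrete.

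In the discrete case, $G^\wedge=\Hom(G,S^1)$ with the compact-open topology coincides with the topology of pointwise convergence, hence is a closed subgroup of the compact group $(S^1)^G$ and is therefore compact; if moreover $G$ is torsion then every character lands in $\Q/\Z\subset S^1$ and, writing $G=\varinjlim_i G_i$ over its finite subgroups, one gets $G^\wedge=\varprojlim_i G_i^\wedge$, an inverse limit of finite groups, i.e.\ a profinite group. Dually the dual of a compact group is discrete and that of a profinite group is discrete and torsion, which gives the last sentence of the statement. The delicate point is to show there are \emph{enough} characters: for $G$ compact abelian this is the Peter--Weyl theorem (equivalently, the continuous characters form an orthonormal basis of $L^2(G,\mathrm{Haar})$, or Gelfand duality applied to the commutative C*-algebra $C(G)$), which implies characters separate points of $G$. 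Granting this, the canonical evaluation map $\iota_G\colon G\to (G^\wedge)^\wedge$ is injective with dense image, and a compactness/openness argument promotes it to a topological isomorphism when $G$ is compact or discrete; the general case then follows from the structure theorem and the five lemma, proving that $\iota$ is a natural isomorphism $\mathrm{id}\cong(-)^{\wedge\wedge}$. In particular $(-)^\wedge$ is its own quasi-inverse and hence an anti-equivalence.

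For the subgroup correspondence, the key fact is that $S^1$ is divisible, so $\Hom(-,S^1)$ is exact on the relevant categories: a short exact sequence $0\to U\to G\to G/U\to 0$ with $U$ a closed subgroup dualizes to $0\to (G/U)^\wedge\to G^\wedge\to U^\wedge\to 0$, and one identifies the image of $(G/U)^\wedge$ in $G^\wedge$ with $\Ann U$. Thus $U\mapsto\Ann U$ is well defined, visibly inclusion-reversing, and injective; applying duality a second time and using the isomorphism $\iota_G$ shows $\Ann\Ann U=U$, so it is a bijection with inverse $V\mapsto\Ann V$. The main obstacle, as indicated, is the analytic core — the structure theorem for locally compact abelian groups and the existence of enough characters on a compact abelian group (Haar measure and Peter--Weyl) — since everything downstream is formal once exactness of $\Hom(-,S^1)$ and double duality are in hand. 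We note that for the applications in this paper only the discrete/profinite part of the statement is actually used, and there the argument is elementary, as the computation in Example~\ref{exa:dualOfQmodZ} already illustrates.
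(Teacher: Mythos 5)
The paper does not prove this theorem: it is imported verbatim from the literature, with the citation \cite[Theorems 39 and 40]{pontryagin} serving in place of a proof. Your sketch is a correct outline of the standard argument one finds there (and in Morris, Hewitt--Ross, etc.): duals of the building blocks $\R$, $\Z$, $S^1$, $\Z/n\Z$, the structure theorem reducing to the compact and discrete cases, Peter--Weyl to produce enough characters, double duality, and the annihilator correspondence via exactness of $\Hom(-,S^1)$. The one place where the sketch is slightly glib is the claim that exactness of $\Hom(-,S^1)$ on closed subgroups ``follows from divisibility of $S^1$'': divisibility plus Zorn only gives the algebraic extension of a character from a closed subgroup, and one must separately ensure the extension can be taken continuous (this is usually deduced from the compact and discrete cases via the structure theorem, or from double duality itself). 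Since you explicitly isolate the analytic core as the hard part and the paper only ever uses the discrete/profinite instance (as in Example \ref{exa:dualOfQmodZ} and Proposition \ref{prop:HigherDimensionalPontryagin}), this is an acceptable level of detail for a cited classical theorem.
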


\subsection{Relative automorphism groups}

In this section we establish some basic results on relative automorphism groups of abelian groups, that is the groups containing those automorphisms that restrict to the identity on a given subgroup. 

If $A$ is an abelian group and $B,C$ are abelian groups containing $A$ as a subgroup, then we denote by $\Hom_A(B,C)$ the set of homomorphisms $B\to C$ that restrict to the identity on $A$. Similarly we define the ring of endomorphisms $\End_A(B)$. We also denote by $\Aut_A(B)$ the group of all automorphisms of $B$ that restrict to the identity on $A$. We call any element of $\Aut_A(B)$ an $A$-automorphism of $B$.

\begin{lem}
\label{lem:pushoutMap}
Let $M$ and $N$ be abelian groups and let $A$ and $B$ be subgroups of $M$. If $f:A\to N$ and $g:B\to N$ are group homomorphisms such that $f_{|A\cap B}=g_{|A\cap B}$, then there exists a unique map $\varphi:A+B\to N$ such that $\varphi_{|A}=f$ and $\varphi_{|B}=g$.
\end{lem}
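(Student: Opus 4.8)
The plan is to construct $\varphi$ by the obvious formula and then check that it is well defined, additive, and unique. Concretely, I would define $\varphi\colon A+B\to N$ by setting
\begin{align*}
\varphi(a+b):=f(a)+g(b)\qquad\text{for }a\in A,\ b\in B.
\end{align*}
Every element of $A+B$ can be written in the form $a+b$ with $a\in A$ and $b\in B$, so this prescription covers all of $A+B$; the only thing that needs work is that it does not depend on the chosen decomposition.

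The key step is well-definedness. Suppose $a+b=a'+b'$ with $a,a'\in A$ and $b,b'\in B$. Then $a-a'=b'-b$, and this common element lies in $A\cap B$. Applying the hypothesis $f_{|A\cap B}=g_{|A\cap B}$ to it gives $f(a-a')=g(b'-b)$, hence, using that $f$ and $g$ are homomorphisms, $f(a)-f(a')=g(b')-g(b)$, i.e.\ $f(a)+g(b)=f(a')+g(b')$. Thus $\varphi$ is well defined. That $\varphi$ is a group homomorphism is then immediate: for $a_1+b_1,a_2+b_2\in A+B$ one has $(a_1+b_1)+(a_2+b_2)=(a_1+a_2)+(b_1+b_2)$ with $a_1+a_2\in A$ and $b_1+b_2\in B$, so $\varphi((a_1+b_1)+(a_2+b_2))=f(a_1+a_2)+g(b_1+b_2)=\varphi(a_1+b_1)+\varphi(a_2+b_2)$. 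Taking $b=0$ shows $\varphi_{|A}=f$ and taking $a=0$ shows $\varphi_{|B}=g$.

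Finally, uniqueness is forced by additivity: if $\psi\colon A+B\to N$ is any homomorphism with $\psi_{|A}=f$ and $\psi_{|B}=g$, then for $a\in A$, $b\in B$ we get $\psi(a+b)=\psi(a)+\psi(b)=f(a)+g(b)=\varphi(a+b)$, so $\psi=\varphi$. I do not expect any real obstacle here; the statement is the standard pushout property of abelian groups, and the only subtlety is the routine compatibility check on $A\cap B$, which is exactly where the hypothesis is used.
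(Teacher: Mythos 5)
Your proof is correct and is essentially the same as the paper's, which simply observes that the statement is the universal property of $A+B$ as the pushout of $A\cap B\hookrightarrow A$ and $A\cap B\hookrightarrow B$; you have just verified that universal property explicitly (well-definedness via the compatibility on $A\cap B$, additivity, and uniqueness). No gaps.
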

\begin{proof}
This is just a rephrasing of the universal property of $A+B$ as the pushout of $A\cap B\hookrightarrow A$ and $A\cap B\hookrightarrow B$.
\end{proof}

\begin{defi}
\label{def:normal}
Let $A\subseteq B\subseteq M$ be abelian groups. We say that $B$ is \emph{$A$-normal in $M$} if the restriction to $B$ of every element of $\Aut_A(M)$ maps $B$ surjectively to itself.

If $B'\subseteq M$ is a subgroup not necessarily containing $A$, then we say that $B'$ is \emph{$A$-normal in $M$} if the following two conditions hold:
\begin{enumerate}[(1)]
\item The group $B'$ is $(A\cap B')$-normal in $A+B'$ and
\item The group $A+B'$ is $A$-normal in $M$.
\end{enumerate}
\end{defi}

\begin{rem}
The choice of the word \emph{normal} in the above definition is in analogy with the case of field extensions in Galois theory.
\end{rem}

\begin{rem}
\label{rem:NormalOverBiggerBaseGroup}
Let $A\subseteq B\subseteq C\subseteq M$ be abelian groups. If $C$ is $A$-normal in $M$, then $C$ is also $B$-normal in $M$. If $B$ is $A$-normal in $C$ and $C$ is $A$-normal in $M$, then $B$ is $A$-normal in $M$.
\end{rem}

If $A\subseteq B\subseteq M$ are abelian groups, then $B$ is $A$-normal in $M$ if and only if the restriction map $\Aut_A(M)\to \Hom_A(B,M)$ factors via $\Aut_A(B)$. In this situation we call this map $\Aut_A(M)\to \Aut_A(B)$ the \emph{natural restriction map}.

\begin{lem}
\label{lem:autFixIntersection}
Let $M$ be an abelian group and let $A,B\subseteq M$ be subgroups of $M$. Assume that $B$ is $A$-normal in $A+B$.
Then the natural restriction map $\Aut_{A\cap B}(A+B)\to\Aut_{A\cap B}(B)$ induces an isomorphism $\Aut_A(A+B)\cong\Aut_{A\cap B}(B)$. 
\end{lem}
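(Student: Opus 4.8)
The plan is to analyse the natural restriction map $\rho\colon\Aut_{A\cap B}(A+B)\to\Aut_{A\cap B}(B)$ (which exists by the hypothesis that $B$ is $A$-normal in $A+B$), first cutting down the source to the subgroup $\Aut_A(A+B)$, and then showing that $\rho$ restricted to this subgroup is both injective and surjective onto all of $\Aut_{A\cap B}(B)$.

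For injectivity: suppose $\sigma\in\Aut_A(A+B)$ satisfies $\rho(\sigma)=\id_B$, i.e.\ $\sigma$ is the identity on $B$. Since $\sigma$ is also the identity on $A$ (it lies in $\Aut_A(A+B)$), and every element of $A+B$ is a sum $a+b$ with $a\in A$, $b\in B$, we get $\sigma(a+b)=\sigma(a)+\sigma(b)=a+b$, so $\sigma=\id$. (This uses nothing beyond additivity.)

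For surjectivity: given $\tau\in\Aut_{A\cap B}(B)$, I want to extend it to an $A$-automorphism of $A+B$. Apply Lemma~\ref{lem:pushoutMap} with $M=A+B$, $N=A+B$, the two subgroups being $A$ and $B$, and the two homomorphisms being the inclusion $A\hookrightarrow A+B$ and $\tau\colon B\to B\subseteq A+B$; these agree on $A\cap B$ precisely because $\tau$ fixes $A\cap B$ pointwise. This yields a unique homomorphism $\varphi_\tau\colon A+B\to A+B$ with $\varphi_\tau|_A=\id_A$ and $\varphi_\tau|_B=\tau$. It remains to check $\varphi_\tau$ is an automorphism: applying the same construction to $\tau^{-1}$ gives $\varphi_{\tau^{-1}}$, and by the uniqueness clause of Lemma~\ref{lem:pushoutMap} the composites $\varphi_{\tau^{-1}}\circ\varphi_\tau$ and $\varphi_\tau\circ\varphi_{\tau^{-1}}$ must both equal $\id_{A+B}$ (each restricts to $\id$ on $A$ and on $B$, hence agrees with the identity by uniqueness). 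So $\varphi_\tau\in\Aut_A(A+B)$ and $\rho(\varphi_\tau)=\tau$. Finally one notes $\rho$ indeed maps $\Aut_A(A+B)$ into $\Aut_{A\cap B}(B)$ since $A\cap B\subseteq A$ is fixed, so the map $\Aut_A(A+B)\to\Aut_{A\cap B}(B)$ is well defined; it is a group homomorphism because it is a restriction of $\rho$.

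I do not anticipate a serious obstacle here: the statement is essentially a formal consequence of the pushout description of $A+B$ recorded in Lemma~\ref{lem:pushoutMap}. The only point requiring a little care is verifying that the extended map $\varphi_\tau$ is invertible rather than merely an endomorphism, and for that the uniqueness half of Lemma~\ref{lem:pushoutMap} does all the work, as sketched above.
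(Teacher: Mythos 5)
Your proof is correct and follows essentially the same route as the paper: injectivity because an automorphism fixing both $A$ and $B$ pointwise fixes $A+B$, and surjectivity by extending $\tau$ and $\id_A$ via Lemma~\ref{lem:pushoutMap}. The only (minor) difference is in checking that the extension is invertible: the paper verifies surjectivity and injectivity of $\varphi_\tau$ directly, while you construct $\varphi_{\tau^{-1}}$ and invoke the uniqueness clause of Lemma~\ref{lem:pushoutMap} to identify the composites with the identity — both arguments work.
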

\begin{proof}
The inclusion $\Aut_A(A+B)\hookrightarrow\Aut_{A\cap B}(A+B)$ composed with the natural restriction yields a group homomorphism $\rho:\Aut_A(A+B)\to \Aut_{A\cap B}(B)$, which is injective because $\ker \rho=\Aut_{A+B}(A+B)=1$.

Let $\sigma\in \Aut_{A\cap B}(B)$ and let $\tilde\sigma:A+B\to A+B$ be the homomorphism obtained by applying Lemma \ref{lem:pushoutMap} to $\sigma$ and $\id_A$. This map is clearly surjective, since every element of $A$ and every element of $B$ are in its image. If $\tilde\sigma(a+b)=0$ for some $a\in A$ and some $b\in B$, then $\sigma(b)=-a\in A\cap B$, which implies that $b\in A\cap B$ and thus $a+b=0$. So $\tilde\sigma$ is injective, thus an automorphism. We conclude that $\rho$ is an isomorphism.
\end{proof}

\subsection{Projective limits of exact sequences}

\begin{rem}
\label{rem:exactSequenceAction}
Let 
\begin{align*}
1\to A\to G\to H\to 1
\end{align*}
be an exact sequence of groups, and assume that $A$ is abelian. Then there is a natural left action of $H$ on $A$, defined as follows.

Let $h\in H$ and consider any lift $\tilde h\in G$ of $h$. Then the action of $h$ on $a\in A$ is defined as
\begin{align*}
\tilde{h}a\tilde h^{-1}
\end{align*}
where we see $a$ as an element of $G$ via the inclusion map. This definition does not depend on the choice of the lift $\tilde h$, because if $\hat h$ is a different lift of $h$ then $\hat h = \tilde h b$ for some $b\in A$, and we have $\hat h a\hat h^{-1}=\tilde hbab^{-1}\tilde h^{-1}=\tilde ha\tilde h^{-1}$. Moreover we have that $\tilde{h}a\tilde{h}^{-1}$ is mapped to $1$ in $H$, so this clearly defines an action of $H$ on $A$.
\end{rem}

The following result is fairly standard, so we state it without proof.

\begin{lem}
\label{lem:generalTopologyExactSequenceLimit}
Let $I$ be a partially ordered set. For every $i\in I$ let $\mathcal{A}_i$ denote an exact sequence of profinite topological groups
\begin{align*}
1\to A_i'\to A_i \to A_i''\to 1
\end{align*}
such that $A_i'$ and $A_i''$ have the subspace and quotient topology with respect to $A_i$, respectively. For every $i\leq j$ let $\rho_{ij}:\mathcal{A}_j\to \mathcal{A}_i$ be a map of exact sequences such that $\{(\mathcal{A})_{i\in I},(\rho_{ij})_{i,j\in I}\}$ is a projective system. Let $\{\mathcal{A},(\pi_i)_{i \in I}\}$ be the limit of this projective system, where $\mathcal{A}$ is
\begin{align*}
1\to A'\to A\to A''\to 1\,.
\end{align*}
Then the subspace topology on $A'$ and the quotient topology on $A''$ coincide with their respective limit topology.
\end{lem}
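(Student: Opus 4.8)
The plan is to reduce the statement to a routine check about the compatibility of three topologies, using the fact that both ``subspace topology'' and ``quotient topology'' commute with projective limits when everything in sight is a topological group. First I would unwind the definition of the limit: $A$ is the closed subgroup of $\prod_{i\in I}A_i$ consisting of compatible tuples, $A'$ is the closed subgroup consisting of compatible tuples lying in $\prod_i A_i'$, and $A''=\varprojlim A_i''$, with the maps $A\to A''$ induced by the surjections $A_i\to A_i''$. The subspace topology on $A'\subseteq A$ is, by transitivity of subspace topologies, the same as the subspace topology inherited from $\prod_i A_i$, which in turn is the subspace topology inherited from $\prod_i A_i'$ (since each $A_i'$ carries the subspace topology from $A_i$); but that last one is exactly the limit topology on $\varprojlim A_i'=A'$. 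So the claim about $A'$ is immediate once the inclusions and limits are disentangled, and I would present it as such.

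For $A''$ the argument is slightly more delicate because quotient topologies do not in general interact well with infinite products, so I would instead argue directly with the universal property. The limit topology on $A''=\varprojlim A_i''$ is the coarsest topology making all the projections $\pi_i'':A''\to A_i''$ continuous. I would show this coincides with the quotient topology on $A/A'$, i.e.\ with the final topology induced by the surjection $q:A\to A''$. One inclusion is easy: $q$ is continuous for the limit topology on $A''$ because each composite $\pi_i''\circ q$ equals (the map $A\to A_i$ followed by $A_i\to A_i''$), which is continuous; hence the quotient topology is finer than or equal to the limit topology. For the reverse inclusion I would take an open set $V\subseteq A''$ in the quotient topology, so $q^{-1}(V)$ is open in $A$, and show $V$ is open in the limit topology; here I expect to use that $q$ is an open map. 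That $q$ is open is the point where the hypothesis that each $A_i''$ has the quotient topology with respect to $A_i$ gets used, together with the description of a basis of the limit topology on $A$ by cylinder sets $\pi_i^{-1}(W_i)$: the image under $q$ of such a basic open set should be expressible as $(\pi_i'')^{-1}$ of the image of $W_i$ in $A_i''$, which is open since $A_i\to A_i''$ is open.

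The main obstacle will be precisely the openness of $q:A\to A''$, or equivalently verifying that the two topologies on $A''$ agree on a neighbourhood basis of the identity; one must be careful that the projection maps $\pi_i:A\to A_i$ need not be surjective, so the image of a basic cylinder set $\pi_i^{-1}(W_i)$ need not simply be $W_i\cap(\text{image of }\pi_i)$ pushed down. I would handle this by working with the images $\pi_i(A)\subseteq A_i$ and their images in $A_i''$, noting that the composite $A\xrightarrow{q}A''\xrightarrow{\pi_i''}A_i''$ factors through $\pi_i(A)\to A_i''$, and that the relevant images are still open because $A_i\to A_i''$ is an open map and restriction of an open quotient map to a subgroup behaves well enough for our purposes. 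Once openness of $q$ is in hand, the equality of the quotient topology and the limit topology on $A''$ follows formally, and combined with the $A'$ computation this completes the proof.
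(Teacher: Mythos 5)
Your treatment of $A'$ is exactly the paper's: both pass through the observation that $\prod_{i}A_i'$ carries the subspace topology from $\prod_i A_i$ and then use transitivity of subspace topologies, so that part is fine. For $A''$ you take a mildly different route from the paper --- the paper starts from an arbitrary open $W\subseteq\prod_i A_i$ with $W\cap A=q^{-1}(U)$ and pushes it forward along the open product map $\prod_i A_i\to\prod_i A_i''$, whereas you work with cylinder sets and try to prove directly that $q\colon A\to A''$ is open --- but both arguments turn on the same point, and that is where your proposal has a genuine gap.

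The gap is in the identity $q(\pi_i^{-1}(W_i))=(\pi_i'')^{-1}(p_i(W_i))$ (writing $p_i\colon A_i\to A_i''$), and the obstruction is not quite the one you flag. Given $x\in A''$ with $\pi_i''(x)=p_i(w)$ for some $w\in W_i$, a lift $a\in A$ of $x$ satisfies only $\pi_i(a)\in w+A_i'$; to move $\pi_i(a)$ into $W_i$ without changing $q(a)$ you must modify $a$ by an element of $A'$ having a \emph{prescribed} image in $A_i'$, and the map $A'\to A_i'$ need not be surjective. ``Restriction of an open quotient map to a subgroup behaves well enough'' is not an argument, and is false in general. What is true is the saturated version $q\bigl(\pi_i^{-1}(W_i+A_i')\cap A\bigr)=(\pi_i'')^{-1}(p_i(W_i))\cap A''$, but then one must still show that the saturated cylinders $\pi_i^{-1}(W_i+A_i')\cap A$ form a neighbourhood basis inside a given saturated open set $q^{-1}(U)$, which again requires controlling the image of $A'$ in $A_i'$. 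So the argument does not close ``formally''; it needs an extra input such as surjectivity of the maps $A'\to A_i'$ (which holds in the paper's application by Proposition \ref{prop:ResIsSurj}) or compactness --- in the profinite setting $A$ is compact and $A''$ Hausdorff, so the continuous bijection $A/A'\to A''$ is automatically a homeomorphism, settling the $A''$ claim in one line. To be fair, the paper's own proof is equally terse at the corresponding step (it asserts that the projection of $W$ meets $A''$ exactly in $U$, which hides the same lifting problem), so your outline is faithful to the intended argument; but as written the crucial step is asserted rather than proved.
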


\section{\texorpdfstring{$s$}{s}-extensions of abelian groups}
\label{sec:sExtensions}

In this section we are going to revisit the theory of certain kinds of extensions of abelian groups that were first introduced by Palenstijn in his master thesis \cite{pmaster}.
These extensions arise naturally when considering the so-called \emph{division
points} of a certain subgroup $A$ of the rational points of a commutative
algebraic group. In particular, the automorphism groups of these extension
provide a framework to study the Galois groups of field extensions generated by
division points.

\subsection{General definitions and first results}

Fix a positive integer $s$.

\begin{defi}
\label{def:sExt}
Let $A$ be a finitely generated abelian group. An \emph{$s$-extension} of $A$ is an abelian group $B$ containing $A$ such that:
\begin{enumerate}[(1)]
\item $B/A$ is torsion;
\item the torsion subgroup of $B$ is isomorphic to a subgroup of $(\Q/\Z)^s$.
\end{enumerate}
\end{defi}

\begin{rem}
\label{rem:necCondAdmitsSExtension}
A necessary (and sufficient) condition for a finitely generated abelian group $A$ to admit an $s$-extension is that the torsion subgroup $A_{\tors}$ of $A$ can be embedded in $(\Q/\Z)^s$.
\end{rem}

\begin{defi}
Let $A$ be a finitely generated abelian group. For every $s$-extension $B$ of $A$, every $a\in A$ and every positive integer $n$ we call any $b\in B$ such that $nb=a$ an \emph{$n$-division point} of $a$ (in $B$). We denote by
\begin{align*}
n_B^{-1}a:=\set{b\in B\mid nb=a}
\end{align*}
the set of $n$-division points of $a$. We omit the subscript $B$ from $n_B^{-1}$ if this is clear from the context.
We also denote by
\begin{align*}
B_n:=\set{b\in B\mid nb\in A}=\bigcup_{a\in A}n_B^{-1}a
\end{align*}
the set of all $n$-division points of elements of $A$, which is again an $s$-extension of $A$. Notice that for $n\mid m$ we have $B_n\subseteq B_m$ and that $B=\bigcup_{n\geq 1}B_n$. 
\end{defi}

\begin{rem}
\label{rem:nDivisionPointsInBijectionWithNtorsion}
Assume that $n^{-1}a$ is not empty. For any fixed $b_0\in n_B^{-1}a$, the map
\begin{align*}
\begin{array}{ccc}
n_B^{-1}a & \to & B[n]\\
b & \mapsto & b-b_0
\end{array}
\end{align*}
is a bijection.
\end{rem}

The following lemmas will be used in what follows, in particular in Section \ref{subsec:Aut}.

\begin{lem}
\label{lem:preservesDivisionPoints}
Let $B$ and $C$ be two $s$-extensions of a finitely generated abelian group $A$ and let $\varphi:B\to C$ be a group homomorphism that is the identity on $A$. For every $a\in A$ and every $b\in n_B^{-1}a$ we have $\varphi(b)\in n_C^{-1}a$. In particular, we have $\varphi(B_n)\subseteq C_n$.
\end{lem}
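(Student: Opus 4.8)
The plan is to observe that this is a direct computation with the defining equation of a division point. Given $a\in A$ and $b\in n_B^{-1}a$, by definition we have $nb=a$ in $B$. Applying the homomorphism $\varphi$ and using that it is the identity on $A$ (so in particular $\varphi(a)=a$), I would compute
\begin{align*}
n\varphi(b)=\varphi(nb)=\varphi(a)=a\,,
\end{align*}
which says precisely that $\varphi(b)\in n_C^{-1}a$. This is the entire content of the first assertion.

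For the ``in particular'' statement, I would simply unwind the definition $B_n=\bigcup_{a\in A}n_B^{-1}a$ and apply the first part term by term: for each $a\in A$ we have $\varphi\!\left(n_B^{-1}a\right)\subseteq n_C^{-1}a$, hence
\begin{align*}
\varphi(B_n)=\varphi\Bigl(\bigcup_{a\in A}n_B^{-1}a\Bigr)=\bigcup_{a\in A}\varphi\!\left(n_B^{-1}a\right)\subseteq\bigcup_{a\in A}n_C^{-1}a=C_n\,.
\end{align*}

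There is essentially no obstacle here; the only thing to be slightly careful about is that the sets $n_B^{-1}a$ may be empty for some $a$ (if $a$ has no $n$-division point in $B$), but this causes no problem since the empty set maps into anything, and the union formula still holds. No appeal to the finite generation of $A$ or to the structure of the torsion subgroup is needed for this lemma; those hypotheses are only there because they are part of the standing definition of an $s$-extension.
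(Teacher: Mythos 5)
Your proof is correct and follows exactly the same one-line computation as the paper ($n\varphi(b)=\varphi(nb)=\varphi(a)=a$); the extra unwinding of the union for the ``in particular'' part is fine but the paper leaves it implicit.
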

\begin{proof}
It is enough to notice that $n\varphi(b)=\varphi(nb)=\varphi(a)=a$.
\end{proof}

\begin{lem}
\label{lem:InjOnTors}
Let $B$ and $C$ be two $s$-extensions of a finitely generated abelian group $A$ and let $\varphi:B\to C$ be a group homomorphism that is the identity on $A$. The kernel of $\varphi$ is contained in $B_{\tors}$. Moreover, if for every prime $\ell$ the restriction of $\varphi$ to $B[\ell]$ is injective, then $\varphi$ is injective.
\end{lem}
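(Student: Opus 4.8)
The plan is to treat the two assertions separately, both by elementary manipulations with division points.

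For the first claim, I would take $b\in\ker\varphi$ and use that $B$ is an $s$-extension of $A$, so $B/A$ is torsion: there is a positive integer $n$ with $nb\in A$. Since $\varphi$ restricts to the identity on $A$, this gives $nb=\varphi(nb)=n\varphi(b)=0$, hence $b\in B_{\tors}$. So $\ker\varphi\subseteq B_{\tors}$.

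For the second claim I would argue by contradiction. Assume the restriction of $\varphi$ to $B[\ell]$ is injective for every prime $\ell$, and suppose there is a nonzero $b\in\ker\varphi$. By the first part $b$ has finite order $m>1$; choose a prime $\ell\mid m$ and set $b'=(m/\ell)b$. Then $\ell b'=mb=0$, so $b'\in B[\ell]$, and $b'\neq 0$ because $b$ has order exactly $m$ while $m/\ell<m$. But $\varphi(b')=(m/\ell)\varphi(b)=0$, contradicting injectivity of $\varphi$ on $B[\ell]$. Hence $\ker\varphi=0$ and $\varphi$ is injective.

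I do not expect any real obstacle here: the only inputs are that $B/A$ is torsion (to land inside $B_{\tors}$) and the trivial fact that every nonzero element of a torsion abelian group has a nonzero multiple of prime order. Note in particular that condition (2) in Definition~\ref{def:sExt} (that $B_{\tors}$ embeds into $(\Q/\Z)^s$) is not needed for this lemma.
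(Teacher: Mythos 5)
Your proof is correct and follows essentially the same route as the paper: the first part is the paper's argument with Lemma \ref{lem:preservesDivisionPoints} unwound into the direct computation $nb=\varphi(nb)=n\varphi(b)=0$, and the second part (passing to a nonzero multiple of prime order inside the kernel) is exactly the paper's argument. Your remark that the embedding condition on $B_{\tors}$ is not used here is accurate.
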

\begin{proof}
Let $b\in\ker\varphi$ and let $n$ be a positive integer such that $nb=a\in A$. By Lemma \ref{lem:preservesDivisionPoints} we have $0\in n^{-1}_Ca$, which implies that $a=0$. In particular, $b$ is torsion.
For the second assertion, assume that $b\neq 0$ and let $\ell$ be a prime dividing the order of $b$. But then $b$ has a multiple of order $\ell$ which is in $\ker\varphi$, a contradiction.
\end{proof}

\begin{lem}
\label{lem:InjSurj}
Let $B$ be an $s$-extension of a finitely generated abelian group $A$ and let
$\varphi:B\to B$ be an endomorphism that is the identity on $A$.
If $\varphi$ is injective, then it is an automorphism.
\end{lem}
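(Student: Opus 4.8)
The plan is to leverage the two preceding lemmas, \ref{lem:InjOnTors} and \ref{lem:preservesDivisionPoints}, together with a structural observation: an $s$-extension $B$ of a finitely generated group $A$ is determined, up to the torsion, by its ``finite layers'' $B_n$, and the torsion of $B_n$ is finite since $B_{\tors}\subseteq(\Q/\Z)^s$ has only finitely many elements of each given order. First I would handle the surjective case. Suppose $\varphi:B\to C$ is surjective but not injective. By Lemma \ref{lem:InjOnTors} the kernel is contained in $B_{\tors}$, so we may choose $0\neq b\in B_{\tors}\cap\ker\varphi$; enlarging if necessary we can assume $b$ has prime order $\ell$, so $\varphi$ is not injective on $B[\ell]$. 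Now $\varphi$ restricts to a surjection $B[\ell^k]\to C[\ell^k]$ for every $k$ (surjectivity onto the torsion of $C$ follows because $C_{\tors}$ is torsion and every torsion element of $C$ lifts, being hit by some element of $B$ which, by Lemma \ref{lem:preservesDivisionPoints} and the argument in Lemma \ref{lem:InjOnTors}, must itself be torsion of the same order — I should double-check this point, but it should go through). Then for large $k$ the finite groups $B[\ell^k]$ and $C[\ell^k]$ both stabilize to the $\ell$-primary part of $B_{\tors}\cong C_{\tors}$, hence have the same finite cardinality, so a surjection between them with nontrivial kernel is impossible.

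For the injective case, suppose $\varphi:B\to C$ is injective. The hypothesis $B_{\tors}\cong C_{\tors}$ is crucial here: I would argue that $\varphi(B_{\tors})$ is a subgroup of $C_{\tors}$ with, for each prime $\ell$ and each $k$, the same cardinality $|B[\ell^k]|=|C[\ell^k]|$ (using that $B[\ell^k]$ injects into $C[\ell^k]$ and both are finite of equal size, since $B_{\tors}$ and $C_{\tors}$ are abstractly isomorphic subgroups of $(\Q/\Z)^s$). A subgroup of a group that has the same finite cardinality in each torsion layer must be the whole group, so $\varphi(B_{\tors})=C_{\tors}$, i.e.\ $\varphi$ is surjective onto the torsion. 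To get full surjectivity, take any $c\in C$; then $nc=a\in A$ for some $n\geq1$. Pick any $b_0\in n_B^{-1}a$ (such a $b_0$ exists because $B$ is an $s$-extension, so $B_n$ surjects onto $A$ under multiplication by $n$ — this is part of the definition, $B=\bigcup B_n$ with $nB_n\subseteq A$, though I want to be careful that every $a\in A$ actually has an $n$-division point; this is really where ``$B/A$ torsion'' combined with the structure as an $s$-extension is used). Then $\varphi(b_0)\in n_C^{-1}a$ by Lemma \ref{lem:preservesDivisionPoints}, so $c-\varphi(b_0)\in C[n]\subseteq C_{\tors}$, which lies in the image of $\varphi$ by the first part; hence $c\in\im\varphi$.

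The main obstacle I anticipate is the surjectivity-onto-torsion step and the existence of $n$-division points in $B$: the definition of $s$-extension only says $B/A$ is torsion, so I need to confirm that for each $a\in A$ and each $n$ there is some $b\in B$ with $nb=a$, which is \emph{not} automatic from the definition and may require the additional running hypothesis that $B$ is ``divisible enough'' — rereading the section, I suspect the intended reading is that $B_n$ as defined does surject onto $A$ via multiplication-by-$n$ for the $s$-extensions under consideration, or else the statement should be applied only in that context. If instead the cleanest route is available, I would phrase the whole proof via the $\ell$-primary layers simultaneously: for each prime $\ell$, $\varphi$ induces a map $B[\ell^\infty]\to C[\ell^\infty]$ between the $\ell$-primary torsion subgroups, both of which are (abstractly) the same subgroup of $(\Q_\ell/\Z_\ell)^s$, hence have the same finite cardinality at every finite level; an injective or surjective self-endomorphism-like map between such cofinal towers of finite groups of matching sizes is forced to be bijective, and then Lemma \ref{lem:InjOnTors} (for injectivity) bootstraps this to all of $B$ via the division-point argument above. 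Everything else is routine diagram-chasing using the two cited lemmas.
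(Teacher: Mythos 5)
Your treatment of the injective case is, at its core, the same as the paper's: the paper maps the fibre $n_B^{-1}a$ injectively into $n_C^{-1}a$ and uses Remark \ref{rem:nDivisionPointsInBijectionWithNtorsion} to see that these are finite sets of the same cardinality $\#B[n]=\#C[n]$, so the restriction of $\varphi$ to the fibre is onto; your detour through $\varphi(B_{\tors})=C_{\tors}$ reaches the same conclusion. The obstacle you flag --- that nothing in Definition \ref{def:sExt} guarantees $n_B^{-1}a\neq\emptyset$ --- is genuine: the paper's proof silently assumes it (the remark only computes $\#n_B^{-1}a$ when the fibre is nonempty), and the statement as literally written fails for $A=B=\Z$, $C=\tfrac12\Z$, $\varphi$ the inclusion. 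In every application in the paper one has $B=C$, so the element $c$ itself witnesses $n_B^{-1}a\neq\emptyset$ and the argument closes; you were right to isolate this as the load-bearing point rather than wave it through.

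The surjective case, however, contains two steps that fail and cannot be repaired. First, $\varphi$ does \emph{not} restrict to a surjection $B[\ell^k]\to C[\ell^k]$: if $\varphi(b')=c$ with $\ell^kc=0$, you only learn that $\ell^kb'\in\ker\varphi\subseteq B_{\tors}$, so $b'$ is torsion of possibly strictly larger order. Second, $B[\ell^k]$ does not stabilize as $k\to\infty$ unless the $\ell$-primary part of $B_{\tors}$ is finite, which is exactly not the case of interest (for a full $s$-extension it is $(\Q_\ell/\Z_\ell)^s$). These gaps are not fixable, because the surjective half of the lemma is false as stated: take $A=\Z\oplus 0\subseteq B=C=\Z\oplus\Q/\Z$ and $\varphi(n,x)=(n,2x)$; this is surjective, restricts to the identity on $A$, and has kernel of order $2$. (The paper's ``the other case can be proven similarly'' glosses over the same difficulty; only the injective half, and only with $B=C$, is ever invoked later in the paper.)
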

\begin{proof}
Assume first that $\varphi$ is injective and let $b\in B$. Let $n$ be a positive
integer such that $nb=a\in A$. By Lemma \ref{lem:preservesDivisionPoints}
we have $\varphi(n^{-1}a)\subseteq n^{-1}a$. Since $n^{-1}a$ is finite there must
be some $b'\in n^{-1}a$ such that $\varphi(b')=b$, hence $\varphi$ is surjective.
\end{proof}

The following proposition gives a criterion to verify if an $s$-extension is normal in the sense of Definition \ref{def:normal}.
\begin{prop}
\label{prop:NormalityCriterion}
Let $B$ be an $s$-extension of a finitely generated abelian group $A$ and let $C\subseteq B$ be a subgroup. If $\Hom_{A\cap C}(C,B)\subseteq\Hom_{A\cap C}(C,C)$, then $C$ is $A$-normal in $B$.

Moreover, under the same assumptions, for every $A\subseteq A'\subseteq C\subseteq B'\subseteq B$ we have that $C$ is $A'$-normal in $B'$.
\end{prop}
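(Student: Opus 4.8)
The plan is to verify the two conditions of Definition~\ref{def:normal} directly from the hypothesis $\Hom_{A\cap C}(C,B)\subseteq\Hom_{A\cap C}(C,C)$, using the pushout machinery of Lemma~\ref{lem:pushoutMap}. First I would establish that $C$ is $(A\cap C)$-normal in $A+C$: given $\psi\in\Aut_A(A+C)$, its restriction $\psi_{|C}$ is an element of $\Hom_{A\cap C}(C,A+C)$. To use the hypothesis I need this restriction to land in $\Hom_{A\cap C}(C,B)$, which is immediate since $A+C\subseteq B$; then the hypothesis forces $\psi_{|C}\in\Hom_{A\cap C}(C,C)$, i.e. $\psi(C)\subseteq C$. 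Applying the same argument to $\psi^{-1}$ gives $\psi^{-1}(C)\subseteq C$, hence $\psi(C)=C$, which is exactly the first defining condition.

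Next I would show $A+C$ is $A$-normal in $B$. Let $\psi\in\Aut_A(B)$. I want $\psi(A+C)\subseteq A+C$; since $\psi$ fixes $A$ it suffices to show $\psi(C)\subseteq A+C$, and in fact I claim $\psi(C)\subseteq C$. The restriction $\psi_{|C}$ is an element of $\Hom_{A\cap C}(C,B)$ (it fixes $A\cap C$ because it fixes all of $A$), so by hypothesis $\psi_{|C}\in\Hom_{A\cap C}(C,C)$, giving $\psi(C)\subseteq C\subseteq A+C$. Running this for $\psi^{-1}$ as well yields $\psi(A+C)=A+C$. Combining the two parts, $C$ is $A$-normal in $B$ by Definition~\ref{def:normal}.

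For the moreover part, fix $A\subseteq A'\subseteq C\subseteq B'\subseteq B$. The key observation is that the hypothesis is \emph{monotone} in the relevant way: since $A\cap C\subseteq A'\cap C=A'$ (as $A'\subseteq C$), any homomorphism fixing $A'$ a fortiori fixes $A\cap C$, so $\Hom_{A'}(C,B')=\Hom_{A'\cap C}(C,B')\subseteq\Hom_{A\cap C}(C,B)$, and by hypothesis this is contained in $\Hom_{A\cap C}(C,C)$. But a homomorphism $C\to C$ fixing $A'$ is in $\Hom_{A'\cap C}(C,C)$, so altogether $\Hom_{A'\cap C}(C,B')\subseteq\Hom_{A'\cap C}(C,C)$. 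This is precisely the hypothesis of the proposition with $(A,B)$ replaced by $(A',B')$, so the first part (already proved) applies verbatim to conclude that $C$ is $A'$-normal in $B'$.

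The only mild subtlety — the step I would be most careful about — is the bookkeeping of intersections: one must check that $A'\cap C=A'$ and that restrictions of $A$-fixing (resp.\ $A'$-fixing) automorphisms genuinely fix $A\cap C$ (resp.\ $A'\cap C$), so that they legitimately lie in the Hom-sets to which the hypothesis applies. There is no analytic or structural obstacle; the content is entirely in correctly matching up the base groups, and the fact that $s$-extensions are involved is not used at all beyond what is needed to make the statement meaningful.
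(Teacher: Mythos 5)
Your proposal follows essentially the same route as the paper: both verify the two conditions of Definition~\ref{def:normal} directly from the hypothesis $\Hom_{A\cap C}(C,B)\subseteq\Hom_{A\cap C}(C,C)$. One slip to correct: to establish condition (1) you must show that every element of $\Aut_{A\cap C}(A+C)$ — not merely of the smaller group $\Aut_A(A+C)$ — carries $C$ into $C$. Your argument only ever uses that $\psi$ fixes $A\cap C$, so it applies verbatim to the larger group, but as written you are checking a strictly weaker statement than the definition requires. The one genuine difference from the paper is how the containment $\psi(C)\subseteq C$ is upgraded to the restriction being an automorphism of $C$: the paper invokes Lemma~\ref{lem:InjSurj} (the restriction is injective between $s$-extensions with the same torsion, hence an isomorphism), whereas you run the containment argument for $\psi^{-1}$ as well and conclude $\psi(C)=C$ from $C=\psi(\psi^{-1}(C))\subseteq\psi(C)$. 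Your variant is slightly more elementary — it uses neither the $s$-extension structure nor Lemma~\ref{lem:InjSurj} — which also means the reduction in the \emph{moreover} part goes through without worrying about whether $B'$ is an $s$-extension of the (possibly not finitely generated) group $A'$. Your treatment of the \emph{moreover} part correctly fleshes out the paper's one-line remark that $\Hom_{A'\cap C}(C,B')\subseteq\Hom_{A\cap C}(C,B)$.
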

\begin{proof}
First of all, notice that $C$ is an $s$-extension of $A\cap C$ and that $A+C$ is an $s$-extension of $A$. Let now $\sigma\in \Aut_{A\cap C}(A+C)$ and consider its restriction $\sigma_C:C\to A+C$. We then have
\begin{align*}
\sigma_C\in \Hom_{A\cap C}(C,A+C)\subseteq \Hom_{A\cap C}(C,B)\subseteq\Hom_{A\cap C}(C,C).
\end{align*}
Moreover $\sigma_C$ is injective, thus an automorphism by Lemma \ref{lem:InjSurj}. This shows that $C$ is $(A\cap C)$-normal in $A+C$.

To see that $A+C$ is $A$-normal in $B$, let $\tau\in \Aut_A(B)$ and consider its restriction $\tau_{A+C}:A+C\to B$. Since $\tau$ is the identity on $A$ and the image of its restriction to $C$ is contained in $C$ by assumption, we have that the image of $\tau_{A+C}$ is contained in $A+C$. Since $\tau$ is injective, by applying Lemma \ref{lem:InjSurj} we see that $\tau_{A+C}$ is an $A$-automorphism of $A+C$, so we conclude that $A+C$ is $A$-normal in $B$. Thus $C$ is $A$-normal in $B$.

The second assertion follows from the first by noticing that $\Hom_{A'\cap C}(C,B')$ is contained in $\Hom_{A\cap C}(C,B)$.
\end{proof}

\begin{exa}
\label{exa:criterion}
Let $B$ be an $s$-extension of a finitely generated abelian group $A$. Proposition \ref{prop:NormalityCriterion} can be applied in the following cases:
\begin{enumerate}[(1)]
\item Let $C$ be either $B_{\tors}$ or $B[n]$ for some positive integer $n$. Then 
  the image of every group homomorphism from $C$ to $B$ is contained in $C$, so
  in particular $\Hom_{A\cap C}(C,B)\subseteq \Hom_{A\cap C}(C,C)$.
\item If $C=B_n$ for some positive integer $n$, then by Lemma \ref{lem:preservesDivisionPoints} we have $\Hom_A(B_n,B)\subseteq \Hom_A(B_n,B_n)$ and hence $\Hom_{A\cap B_n}(B_n,B)\subseteq \Hom_{A\cap B_n}(B_n,B_n)$.
\end{enumerate}
\end{exa}

\subsection{Automorphisms of $s$-extensions}
\label{subsec:Aut}

We now study the automorphisms of an $s$-extension that are the identity on the base group.
Recall that if $B$ is an abelian group and $A\subseteq B$ is a subgroup we denote by $\Aut_A(B)$ the group of all automorphisms of $B$ that restrict to the identity on $A$.

Fix for the remainder of this section a finitely generated abelian group $A$.

The following result is a generalization of \cite[Lemma 1.8]{palenstijn}, and the proof is essentially the same. We include it here for the sake of completeness.

\begin{prop}
\label{prop:ResIsSurj}
Let $B$ be an $s$-extension of $A$ and let $C\subseteq B$ be a subgroup. If $C$ is $A$-normal in $B$, the image of the restriction map $\Aut_A(B)\to\Hom_{A\cap C}(C,B)$ is $\Aut_{A\cap C}(C)$.
\end{prop}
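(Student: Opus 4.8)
The plan is to show that the restriction map $\rho\colon\Aut_A(B)\to\Hom_{A\cap C}(C,B)$ has image exactly $\Aut_{A\cap C}(C)$. That the image is contained in $\Aut_{A\cap C}(C)$ is essentially a restatement of the hypothesis that $C$ is $A$-normal in $B$: by Definition \ref{def:normal}, every element of $\Aut_A(B)$ maps $A+C$ to itself and restricts to an automorphism of $A+C$, and then by the first clause of that definition its further restriction to $C$ lands in $C$, and since it is injective it is an automorphism of $C$ by Lemma \ref{lem:InjSurj} (after noting $C$ is an $s$-extension of $A\cap C$). So the work is in surjectivity: given $\sigma\in\Aut_{A\cap C}(C)$, I must extend it to an $A$-automorphism of $B$.

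The key steps are the following. First, use Lemma \ref{lem:pushoutMap} applied to $\sigma\colon C\to A+C$ and $\id_A\colon A\to A+C$, which agree on $A\cap C$, to obtain a homomorphism $\tilde\sigma\colon A+C\to A+C$; by the argument in Lemma \ref{lem:autFixIntersection} (or directly) $\tilde\sigma$ is an $A$-automorphism of $A+C$, since it is surjective and its kernel is trivial. Thus it suffices to extend an $A$-automorphism of $A+C$ to an $A$-automorphism of $B$; renaming, we may assume $A\subseteq C\subseteq B$ with $C$ an $s$-extension of $A$ that is $A$-normal in $B$, and $\sigma\in\Aut_A(C)$ is given. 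The second and central step is a Zorn's lemma argument: consider the poset of pairs $(D,\tau)$ where $C\subseteq D\subseteq B$ is an intermediate $s$-extension and $\tau\in\Hom_A(D,B)$ extends $\sigma$, ordered by extension. A chain has an upper bound given by the union (using that $B=\bigcup_n B_n$ and that any element of $B$ lies in some finitely generated-over-$A$ stage, so unions of $s$-extensions inside $B$ are again $s$-extensions). Let $(D,\tau)$ be maximal. By Lemma \ref{lem:preservesDivisionPoints} and Lemma \ref{lem:InjOnTors}, $\tau$ is injective; I claim $D=B$. If not, pick $b\in B\setminus D$; then $b$ has some finite order modulo $D$, and the third step is the actual extension lemma: one must extend $\tau$ to $D+\langle b\rangle$.

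The main obstacle — and the heart of the proof — is this last extension step, and it is exactly where the hypothesis that $\tors(B)$ embeds in $(\Q/\Z)^s$ (built into the definition of $s$-extension) is used. Writing $nb=d\in D$ for minimal $n>1$, one needs an element $c\in B$ with $\tau$ extendable, i.e.\ one needs to choose $\tau(b)$ to be an element $y\in B$ with $ny=\tau(d)$ and compatible with all relations $mb\in D$; the obstruction to finding such $y$ lives in a quotient of $\tors(B)$, and divisibility of $(\Q/\Z)^s$ (injectivity as a $\Z$-module) guarantees the relevant map is surjective so that a valid $y$ exists. Concretely, I would argue: the element $\tau(d)$ has an $n$-division point $b_0$ in $B$ (because $d$ has the $n$-division point $b$ in $B\subseteq B$, and $\tau(d)=$ \dots — more carefully, one reduces to extending across a prime-power step and uses that $B_{\tors}\cong C_{\tors}$ and the bijection of Remark \ref{rem:nDivisionPointsInBijectionWithNtorsion} to match up division points); then adjust $b_0$ by a torsion element to kill the discrepancy on the subgroup of relations, which is possible because $B[n]\hookrightarrow(\Z/n\Z)^s$ is an injective object in the relevant category, equivalently because any homomorphism into $(\Q/\Z)^s$ from a subgroup extends. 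Once $\tau$ extends to $D+\langle b\rangle$, this contradicts maximality, so $D=B$; finally $\tau\in\Hom_A(B,B)$ is injective with $B_{\tors}\cong B_{\tors}$, hence an isomorphism by Lemma \ref{lem:InjSurj}, giving the desired $A$-automorphism of $B$ restricting to $\sigma$. Composing back through the identification $\Aut_A(A+C)\cong\Aut_{A\cap C}(C)$ of Lemma \ref{lem:autFixIntersection} finishes the general case.
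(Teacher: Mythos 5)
Your reduction to the case $A\subseteq C$ via Lemma \ref{lem:pushoutMap} and Lemma \ref{lem:autFixIntersection}, and the subsequent Zorn's lemma setup, match the paper's proof exactly. The gap is in the one-element extension step, which is where all the content of the proposition lives, and it is twofold. First, injectivity is never actually maintained. Your poset consists of pairs $(D,\tau)$ with $\tau\in\Hom_A(D,B)$, and you assert that $\tau$ is injective ``by Lemma \ref{lem:preservesDivisionPoints} and Lemma \ref{lem:InjOnTors}''; but Lemma \ref{lem:InjOnTors} only places $\ker\tau$ inside $D_{\tors}$, and yields injectivity only if $\tau$ is injective on every $B[\ell]$ — which is exactly what must be arranged, not assumed (a map in $\Hom_A(D,B)$ can perfectly well kill torsion). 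The paper builds injectivity into the poset and then devotes the first half of its extension argument precisely to this issue: it first extends over all elements of prime order, using the counting argument $\#\tilde B[\ell]<\#B[\ell]$ to choose an image $y\in B[\ell]$ \emph{outside} the current image, and later verifies that the chosen division point $y$ is not already in $\im\tilde\varphi$ before invoking Lemma \ref{lem:pushoutMap}. Without some such argument your final map is only an $A$-endomorphism of $B$, and Lemma \ref{lem:InjSurj} does not apply.

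Second, your mechanism for producing the new image $y$ — injectivity of $(\Q/\Z)^s$ as a $\Z$-module, or the claim that $B[n]$ is an injective object — does not work, because the target of the extension is $B$ (equivalently $B_{\tors}$), which is merely \emph{a subgroup of} $(\Q/\Z)^s$ and need not be divisible. Baer's criterion lets you extend a homomorphism into $(\Q/\Z)^s$, but nothing forces that extension to land in $B_{\tors}$; likewise $B[n]\subseteq(\Z/n\Z)^s$ is generally not injective as a $\Z/n\Z$-module (e.g.\ $\Z/\ell\subseteq\Z/\ell^2$). So the existence of a division point $y\in n_B^{-1}\tau(d)$, which you flag but do not establish, is not supplied by your divisibility argument. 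The paper instead arranges $B[\ell]\subseteq\tilde B$ for all primes $\ell$ first and then works one prime step at a time with $b=\ell x$; you need either that route or the additional hypothesis that $B$ is a full $s$-extension (where $B_{\tors}\cong(\Q/\Z)^s$ really is divisible) for your argument to go through.
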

\begin{proof}
By Lemma \ref{lem:autFixIntersection} we have $\Aut_A(A+C)\cong\Aut_{A\cap C}(C)$ via the restriction map, so it is enough to show that the restriction $\Aut_A(B)\to \Aut_A(A+C)$, which exists because $A+C$ is $A$-normal in $B$, is surjective. Thus we may assume that $A\subseteq C$.

In view of Lemma \ref{lem:InjSurj} it is enough to prove that every $\varphi\in \Aut(C)$ can be extended to an injective homomorphism $B\to B$. Consider the set of pairs $(M,\phi)$, where $M$ is a subgroup of $B$ containing $C$ and $\phi:M\to B$ is an injective homomorphism extending $\varphi$, ordered by inclusion
\begin{align*}
(M,\phi)\subseteq (M',\phi') \qquad \iff \qquad M\subseteq M' \quad \text{and} \quad \phi_{|M}'=\phi.
\end{align*}
By Zorn's Lemma this ordered set admits a maximal element $(\tilde B,\tilde\varphi)$ and we need to show that $\tilde B=B$.
We prove this by contradiction, assuming that there exists $x\in B\setminus \tilde B$ and proving that we can then extend $\tilde{\varphi}$ to an injective map $\langle \tilde B,x\rangle\to B$.

Assume first that the order of $x$ is a prime number $\ell$. An element of $\tilde B$ mapping to $B[\ell]$ must be in $\tilde B[\ell]$ because $\tilde\varphi$ is injective.
Since $x\in B[\ell]\setminus \tilde B[\ell]$ we have $\#\tilde B[\ell]< \#B[\ell]$, so there must be $y\in B[\ell]\setminus \{0\}$ that is not in the image of $\tilde\varphi$. Using Lemma \ref{lem:pushoutMap} we can then extend $\tilde\varphi$ to $\langle \tilde B,x\rangle$ by letting $\tilde\varphi(x):=y$. The map we obtain is still injective, so we may assume that $\tilde B$ contains all elements of prime order of $B$.

Let now $k$ be the smallest positive integer such that $kx\in \tilde B$. Up to replacing $x$ with a suitable multiple, we may assume that $k=\ell$ is a prime number. Let $b=\ell x\in  \tilde B$. The fact that  $B[\ell]\subseteq \tilde B$ implies that $\ell^{-1}_Bb\subseteq B\setminus \tilde B$.

Consider now $\tilde\varphi(b)\in B$ and let $y\in \ell_B^{-1}\tilde\varphi(b)$. If $y\in \im(\tilde\varphi)$, then there is $z\in\tilde B$ such that $\tilde{\varphi}(z)=y$, thus $\tilde{\varphi}(\ell z)=\ell y=\tilde\varphi(b)$ and so $\ell z=b$, a contradiction. Since $\tilde B\cap \langle x\rangle=\langle \ell x\rangle$ and $\tilde\varphi(\ell x)=\ell y$, using again Lemma \ref{lem:pushoutMap} we can extend $\tilde \varphi$ to $\langle \tilde B,x\rangle$ by letting $\tilde \varphi(x):=y$. By Lemma \ref{lem:InjOnTors}, the homomorphism $\langle \tilde B,x\rangle\to B$ that we obtain is still injective.

We conclude that $\tilde B=B$, thus the restriction map $\Aut_A(B)\to \Aut_A(C)$ is surjective.
\end{proof}

\begin{prop}
\label{prop:AutIsHom}
Let $B$ be an $s$-extension of $A$.
There is a canonical isomorphism
\begin{align*}
\varphi:\Aut_{A+B_{\tors}}(B)\cong \Hom(B/(A+B_{\tors}),B_{\tors})
\end{align*}
which sends any $\sigma\in \Aut_{A+B_{\tors}}(B)$ to the group homomorphism $[b]\mapsto \sigma(b)-b$.
\end{prop}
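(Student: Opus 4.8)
The plan is to write down the candidate map explicitly, check it is well-defined and a homomorphism, and then produce an inverse. First I would verify well-definedness: for $\sigma \in \Aut_{A+B_{\tors}}(B)$ and $b \in B$, the element $\sigma(b)-b$ lies in $B_{\tors}$. Indeed, $b$ has some positive multiple $nb = a \in A$; by Lemma \ref{lem:preservesDivisionPoints} applied with $C = B$, we get $\sigma(b) \in n_B^{-1}a$, so $n(\sigma(b)-b) = a - a = 0$, hence $\sigma(b)-b \in B[n] \subseteq B_{\tors}$. Moreover, if $b \in A + B_{\tors}$ then $\sigma(b) = b$, so the assignment $b \mapsto \sigma(b)-b$ factors through $B/(A+B_{\tors})$; call the resulting map $f_\sigma \colon B/(A+B_{\tors}) \to B_{\tors}$. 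Additivity of $f_\sigma$ is immediate: $(\sigma(b+b')-(b+b')) = (\sigma(b)-b)+(\sigma(b')-b')$. So $\varphi(\sigma) := f_\sigma$ is a well-defined element of $\Hom(B/(A+B_{\tors}), B_{\tors})$.

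Next I would check that $\varphi$ is a group homomorphism. For $\sigma, \tau \in \Aut_{A+B_{\tors}}(B)$ and $b \in B$, write $\tau(b) = b + t$ with $t = f_\tau([b]) \in B_{\tors}$; then $\sigma(\tau(b)) = \sigma(b) + \sigma(t) = \sigma(b) + t$ since $\sigma$ fixes $B_{\tors}$ pointwise. Hence $(\sigma\tau)(b) - b = (\sigma(b)-b) + t = f_\sigma([b]) + f_\tau([b])$, so $\varphi(\sigma\tau) = \varphi(\sigma) + \varphi(\tau)$.

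For injectivity: if $f_\sigma = 0$ then $\sigma(b) = b$ for all $b$, i.e.\ $\sigma = \id$. For surjectivity, given $f \in \Hom(B/(A+B_{\tors}), B_{\tors})$, define $\sigma \colon B \to B$ by $\sigma(b) = b + f([b])$. This is additive by the same computation as above (now using that $f([b+b']) = f([b]) + f([b'])$), and it restricts to the identity on $A + B_{\tors}$. It remains to see $\sigma$ is bijective; the cleanest route is to observe $\sigma$ is a homomorphism of $s$-extensions of $A$ (it is the identity on $A$) and then invoke Lemma \ref{lem:InjSurj}: it suffices to check injectivity. If $\sigma(b) = 0$, then $b = -f([b]) \in B_{\tors}$, so $[b] = 0$ in $B/(A+B_{\tors})$, hence $f([b]) = 0$ and $b = 0$; thus $\sigma$ is injective, and by Lemma \ref{lem:InjSurj} an isomorphism, so $\sigma \in \Aut_{A+B_{\tors}}(B)$ with $\varphi(\sigma) = f$. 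Finally, $\varphi$ and the map $f \mapsto \sigma$ are mutually inverse by construction, so $\varphi$ is an isomorphism.

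I do not expect any serious obstacle here: the statement is essentially a Galois-cohomology-style identification of the relative automorphism group with a $\Hom$-group, and the only point that genuinely uses the $s$-extension hypotheses is the well-definedness of $\sigma(b)-b$ as a torsion element (via Lemma \ref{lem:preservesDivisionPoints}) and the bijectivity of the constructed $\sigma$ (via Lemma \ref{lem:InjSurj}); everything else is formal. The one thing to be careful about is that $\sigma(t) = t$ for $t \in B_{\tors}$ is used repeatedly and relies precisely on $\sigma$ being the identity on $A + B_{\tors} \supseteq B_{\tors}$, not merely on $A$.
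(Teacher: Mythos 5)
Your proposal is correct and follows essentially the same route as the paper: well-definedness via Lemma \ref{lem:preservesDivisionPoints}, the homomorphism property via $\sigma(t)=t$ for $t\in B_{\tors}$, and surjectivity by constructing $\sigma_\psi(b)=b+\psi([b])$ and invoking Lemma \ref{lem:InjSurj} after checking injectivity. No gaps.
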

\begin{proof}
Let $\sigma\in \Aut_{A+B_{\tors}}(B)$. By Lemma \ref{lem:preservesDivisionPoints} we can define a map
\begin{align*}
\varphi_{\sigma}:B/(A+B_{\tors})&\longrightarrow B_{\tors}\\
[b]&\longmapsto \sigma(b)-b
\end{align*}
which is clearly a group homomorphism.
We claim that the map
\begin{align*}
\varphi:\Aut_{A+B_{\tors}}(B)&\longrightarrow \Hom(B/(A+B_{\tors}),B_{\tors})\\
\sigma&\longmapsto\varphi_\sigma
\end{align*}
is also group homomorphism. To see this, let $\sigma,\tau\in \Aut_{A+B_{\tors}}(B)$. Notice that, since $\tau(b)-b\in B_{\tors}$ for every $b\in B$, we have $\sigma(\tau(b)-b)=\tau(b)-b$. Then we have
\begin{align*}
\varphi_{\sigma\tau}([b])&=\sigma(\tau(b))-b=\\
&=\sigma(\tau(b))-b+\tau(b)-b-\sigma(\tau(b)-b)=\\
&=\tau(b)-b+\sigma(b)-b=\\
&=\varphi_\sigma([b])+\varphi_\tau([b])
\end{align*}
which proves our claim.

The homomorphism $\varphi$ is injective, because if $\varphi_\sigma=0$ then $\sigma(b)=b$ for all $b\in B$. To see that $\varphi$ is surjective, for any $\psi\in \Hom(B/(A+B_{\tors}), B_{\tors})$ let
\begin{align*}
\sigma_\psi:B&\longrightarrow B\\
b&\longmapsto b+\psi([b])
\end{align*}
which is clearly a group homomorphism that is the identity on $A+B_{\tors}$. It is also injective, because if $b+\psi([b])=0$ then $b=-\psi([b])$ must be a torsion point, hence $-b=\psi([b])=\psi(0)=0$. By Lemma \ref{lem:InjSurj}, we have $\sigma_\psi\in\Aut_{A+B_{\tors}}(B)$ and clearly $\varphi_{\sigma_{\psi}}=\psi$, so $\varphi$ is surjective.
We conclude that $\varphi$ is an isomorphism.
\end{proof}

Combining the previous results, we obtain a fundamental exact sequence that provides our framework for the study of Kummer extensions.

\begin{prop}[{\cite[Corollary 3.12 and Corollary 3.18]{pmaster}}]
\label{prop:SequenceAut}
Let $B$ be an $s$-extension of $A$. There is an exact sequence
\begin{align*}
0\to \Hom\left(\frac{B}{A+B_{\tors}},B_{\tors}\right)\to \Aut_{A}(B)\to\Aut_{A_{\tors}}(B_{\tors})\to 1\,.
\end{align*}
Moreover, the group $\Aut_{A_{\tors}}(B_{\tors})$ acts on $\Hom\left(B/(A+B_{\tors}),B_{\tors}\right)$ by composition.
\end{prop}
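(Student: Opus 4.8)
The plan is to obtain the asserted short exact sequence as the one induced by the restriction map to the torsion subgroup, assembling the two structural results just proved. First I would note that $B_{\tors}$ is $A$-normal in $B$ by Example \ref{exa:criterion}(1), so the natural restriction map $\Aut_A(B)\to\Aut_{A\cap B_{\tors}}(B_{\tors})$ is defined; since $A\subseteq B$ we have $A\cap B_{\tors}=A_{\tors}$, so its target is $\Aut_{A_{\tors}}(B_{\tors})$. Surjectivity of this map is exactly Proposition \ref{prop:ResIsSurj} applied with $C=B_{\tors}$: the image of $\Aut_A(B)\to\Hom_{A\cap B_{\tors}}(B_{\tors},B)$ is $\Aut_{A\cap B_{\tors}}(B_{\tors})$, and $\Hom_{A\cap B_{\tors}}(B_{\tors},B)=\Hom_{A_{\tors}}(B_{\tors},B_{\tors})$ because every homomorphism out of a torsion group has image in the torsion subgroup.

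Next I would identify the kernel. An element $\sigma\in\Aut_A(B)$ lies in the kernel precisely when it is the identity on $B_{\tors}$ as well as on $A$, i.e.\ exactly when $\sigma\in\Aut_{A+B_{\tors}}(B)$. By Proposition \ref{prop:AutIsHom}, this group is canonically isomorphic to $\Hom(B/(A+B_{\tors}),B_{\tors})$ via $\sigma\mapsto([b]\mapsto\sigma(b)-b)$; recall this is well defined because for any $n$ with $nb\in A$ one has $n(\sigma(b)-b)=\sigma(nb)-nb=0$, so $\sigma(b)-b\in B_{\tors}$, and in fact every element of $\Aut_A(B)$ acts trivially on $B/B_{\tors}$. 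Combining this with the previous paragraph yields exactly the exact sequence
\begin{align*}
0\to \Hom\left(\frac{B}{A+B_{\tors}},B_{\tors}\right)\to \Aut_{A}(B)\to\Aut_{A_{\tors}}(B_{\tors})\to 1\,.
\end{align*}

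For the last assertion: $\Hom(B/(A+B_{\tors}),B_{\tors})$ is an abelian normal subgroup of $\Aut_A(B)$ with quotient $\Aut_{A_{\tors}}(B_{\tors})$, so Remark \ref{rem:exactSequenceAction} furnishes a conjugation action of the quotient on the subgroup, and I would check that under the isomorphism of Proposition \ref{prop:AutIsHom} it becomes post-composition. Concretely, lift $\tau\in\Aut_{A_{\tors}}(B_{\tors})$ to some $\sigma\in\Aut_A(B)$, and let $\rho_\psi(b)=b+\psi([b])$ be the automorphism attached to $\psi\in\Hom(B/(A+B_{\tors}),B_{\tors})$. Then for $b\in B$, writing $b'=\sigma^{-1}(b)$ and using $[b']=[b]$ in $B/(A+B_{\tors})$ (since $\sigma$ is trivial on $B/B_{\tors}$), together with $\psi([b'])\in B_{\tors}$ and $\sigma|_{B_{\tors}}=\tau$, one computes $(\sigma\rho_\psi\sigma^{-1})(b)=\sigma\bigl(b'+\psi([b'])\bigr)=b+\tau(\psi([b]))$, i.e.\ $\sigma\rho_\psi\sigma^{-1}=\rho_{\tau\circ\psi}$. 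Hence the action is $\psi\mapsto\tau\circ\psi$, as claimed.

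The proof is essentially bookkeeping once Propositions \ref{prop:ResIsSurj} and \ref{prop:AutIsHom} are available; the only step requiring genuine care is the last one, verifying that the conjugation action of the exact sequence coincides with post-composition, and the key input there is the elementary observation that every $A$-automorphism of $B$ induces the identity on $B/B_{\tors}$.
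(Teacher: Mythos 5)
Your proposal is correct and follows the same route as the paper: $A$-normality of $B_{\tors}$ via Example \ref{exa:criterion}(1) plus Proposition \ref{prop:ResIsSurj} for surjectivity, Proposition \ref{prop:AutIsHom} for the kernel, and the explicit conjugation computation $\sigma\rho_\psi\sigma^{-1}=\rho_{\tau\circ\psi}$ (using that $\sigma^{-1}(b)-b\in B_{\tors}$) to identify the action with post-composition. No gaps.
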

\begin{proof}
Notice that $B_{\tors}$ is $A$-normal in $B$ by Example \ref{exa:criterion}, so the restriction map $\Aut_{A}(B)\to\Aut_{A_{\tors}}(B_{\tors})$ is surjective by Proposition \ref{prop:ResIsSurj}, and its kernel is $\Aut_{A+B_{\tors}}(B)$. By Proposition \ref{prop:AutIsHom} we have $\Aut_{A+B_{\tors}}(B)\cong \Hom(B/(A+B_{\tors}),B_{\tors})$, so we get the desired exact sequence.

It follows from the existence of the exact sequence above and by 
Remark \ref{rem:exactSequenceAction} that the group $\Aut_{A_{\tors}}(B_{\tors})$
acts naturally on $\Hom\left(B/(A+B_{\tors}),B_{\tors}\right)$ by conjugation.
Let now $\psi\in \Hom\left(B/(A+B_{\tors}),B_{\tors}\right)$ correspond to the automorphism $\sigma_\psi:b\to b+\psi([b])$ via the isomorphism of Proposition \ref{prop:AutIsHom}, and let $\tau \in \Aut_{A_{\tors}}(B_{\tors})$. Let moreover $\tilde\tau$ be any lift of $\tau$ to $\Aut_A(B)$. Then for every $b\in B$ we have
\begin{align*}
(\tilde{\tau}\circ\sigma_\psi\circ\tilde \tau^{-1})(b)&=\tilde\tau\left(\tilde\tau^{-1}(b)+\psi([\tilde\tau^{-1}(b)])\right)=\\
&= b+\tilde\tau\left(\psi([\tilde\tau^{-1}(b)])\right)
\end{align*}
and since $\tilde\tau^{-1}$ fixes $A$, as in the proof of Proposition \ref{prop:AutIsHom} we have that $\tilde\tau^{-1}(b)-b\in B_{\tors}$. It follows that $\psi([\tilde\tau^{-1}(b)])=\psi([b])$, so
\begin{align*}
(\tilde{\tau}\circ\sigma_\psi\circ\tilde \tau^{-1})(b)=b+\tilde\tau(\psi([b]))=b+(\tau\circ\psi)([b]),
\end{align*}
where the last equality follows from the fact that $\psi([b])\in B_{\tors}$. We conclude that the natural action of $\Aut_{A_{\tors}}(B_{\tors})$ on $\Hom\left(B/(A+B_{\tors}),B_{\tors}\right)$ is given by composition.
\end{proof}

\subsection{Profinite structure of automorphism groups}
\label{sec:profinite}

Fix for the remainder of this section a finitely generated abelian group $A$. For any $s$-extension $B$ of $A$ and for any positive integer $n$ we can consider the group $B_n$ and its automorphism group $\Aut_A(B_n)$ which, according to the following proposition, is finite.

\begin{prop}
\label{prop:FiniteAutIsFinite}
Let $B$ be an $s$-extension of $A$ and assume that $B/A$ has finite exponent. Then the automorphism group $\Aut_A(B)$ is finite.
\end{prop}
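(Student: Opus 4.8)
The plan is to reduce the finiteness of $\Aut_A(B)$ to two finiteness statements that we already have tools for: finiteness of the torsion automorphism group $\Aut_{A_{\tors}}(B_{\tors})$, and finiteness of the "Kummer part" $\Hom\bigl(B/(A+B_{\tors}),B_{\tors}\bigr)$. Indeed, by Proposition~\ref{prop:SequenceAut} there is an exact sequence
\begin{align*}
0\to \Hom\!\left(\frac{B}{A+B_{\tors}},B_{\tors}\right)\to \Aut_{A}(B)\to\Aut_{A_{\tors}}(B_{\tors})\to 1,
\end{align*}
so it suffices to show that the outer two groups are finite; then $\Aut_A(B)$ is finite as an extension of a finite group by a finite group.

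For the torsion part: since $B$ is an $s$-extension, $B_{\tors}$ embeds in $(\Q/\Z)^s$, and by Remark~\ref{rem:HigherDimPontryagin} its endomorphisms sit inside $\Mat_{s\times s}(\hat\Z)$. The hypothesis that $B/A$ has finite exponent, say $eB\subseteq A$, forces $B_{\tors}$ to have finite exponent: any torsion $b$ satisfies $eb\in A_{\tors}$, and $A$ is finitely generated so $A_{\tors}$ is a finite group of some exponent $f$, whence $efb=0$. Thus $B_{\tors}$ is a finite abelian group (a finite-exponent subgroup of $(\Q/\Z)^s$ has at most $(ef)^s$ elements), so $\Aut(B_{\tors})$ — and a fortiori $\Aut_{A_{\tors}}(B_{\tors})$ — is finite.

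For the Kummer part: the quotient $B/(A+B_{\tors})$ is torsion-free (any class killed by $n$ lifts to some $b$ with $nb\in A+B_{\tors}$, hence $nb$ has a torsion multiple equal to an element of $A$, so after adjusting $b$ by a torsion element we get $nb\in A$, i.e.\ $b\in B_{\tors}\cdot(\text{divisible part})$; more simply, if $n[b]=0$ then $nb=a+t$ with $a\in A$, $t\in B_{\tors}$, so $n\cdot nb = na + nt$ and iterating with the exponent of $B_{\tors}$ shows a suitable multiple of $b$ lies in $A$, hence $[b]$ is torsion in the torsion-free... ) — I would instead argue directly that it is finitely generated: $B/A$ has finite exponent $e$ and is a quotient of $B$, but since $eB\subseteq A$ and $A$ is finitely generated, $B$ is finitely generated as well (generated by generators of $A$ together with coset representatives of $B/eB$, and $B/eB$ is a finite-exponent quotient of the $s$-extension, hence finite by the same bound as above). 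Therefore $B/(A+B_{\tors})$ is a finitely generated abelian group, and being torsion-free it is free of finite rank $t$. Then $\Hom(B/(A+B_{\tors}),B_{\tors})\cong (B_{\tors})^{t}$ is finite.

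The main obstacle is the bookkeeping that shows $B$ itself is finitely generated (equivalently, that $B/(A+B_{\tors})$ has finite rank) from the hypothesis that $B/A$ has finite exponent — one must combine the finite exponent of $B/A$ with the structural constraint that $B_{\tors}\hookrightarrow(\Q/\Z)^s$ to control both the torsion and the free rank simultaneously; once that is in hand, everything else is immediate from the exact sequence and the already-established description of the groups involved.
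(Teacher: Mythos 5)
Your overall strategy is exactly the paper's: use the exact sequence of Proposition~\ref{prop:SequenceAut} and show the two outer terms are finite, which the paper does by observing that $B_{\tors}$ and $B/(A+B_{\tors})$ are both finite. Your treatment of $B_{\tors}$ is fine. But your handling of the Kummer factor contains a genuine error: $B/(A+B_{\tors})$ is \emph{not} torsion-free. It is a quotient of $B/A$, which is torsion by the very definition of an $s$-extension, so it is a torsion group (e.g.\ for $A=\Z\subseteq B=\tfrac12\Z$ it is $\Z/2\Z$). Consequently the identification $\Hom\bigl(B/(A+B_{\tors}),B_{\tors}\bigr)\cong(B_{\tors})^{t}$ with $t$ a free rank is wrong. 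What you actually need is that $B/(A+B_{\tors})$ is \emph{finite} (finitely generated plus torsion), after which finiteness of the Hom group is immediate because both arguments are finite groups.

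The second, more serious, problem is that your argument for $B$ being finitely generated is circular. You assert that $B/eB$ is finite ``by the same bound as above'', but that bound applied to finite-exponent \emph{subgroups} of $(\Q/\Z)^s$, and $B/eB$ is a quotient of $B$, not such a subgroup: already for $B=A=\Z^{r}$ and $s=0$ it has $e^{r}$ elements. In fact the finiteness of $B/eB$ is essentially equivalent to the finiteness of $B/A$, which is the point at issue. The gap is closed by using the torsion constraint on the kernel side instead: multiplication by $e$ maps $B$ into $A$ (since $eB\subseteq A$) with kernel $B[e]\subseteq\bigl((\Q/\Z)^s\bigr)[e]$, a group with at most $e^{s}$ elements. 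Thus $B$ is an extension of $eB$, a subgroup of the finitely generated group $A$, by the finite group $B[e]$; hence $B$ is finitely generated, $B/(A+B_{\tors})$ is finitely generated and torsion, hence finite, and the rest of your argument goes through.
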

\begin{proof}
In view of Proposition \ref{prop:SequenceAut} it is enough to prove that
$\Hom\left({B}/(A+B_{\tors}),B_{\tors}\right)$ and $\Aut_{A_{\tors}}(B_{\tors})$ are finite. But this follows from the fact that both $B_{\tors}$ and $B/(A+B_{\tors})$ are finite, since $A$ is finitely generated, $B/A$ has finite exponent and $B_{\tors}$ embeds in $(\Q/\Z)^s$.
\end{proof}

Let $B$ be an $s$-extension of $A$. By Proposition \ref{prop:SequenceAut} for every positive $n$ we have an exact sequence
\begin{align*}
0\to \Hom\left(\frac{B_n}{A+B_{n,\tors}},B_{n,\tors}\right)\to \Aut_{A}(B_n)\to\Aut_{A_{\tors}}(B_{n,\tors})\to 1
\end{align*}

and for every $n\mid m$ the restriction maps make the following diagram commute:
\begin{equation*}
  \begin{tikzcd}[column sep=1.9em] 
0\arrow[r]&\Hom\left(\dfrac{B_m}{A+B_{m,\tors}},B_{m,\tors}\right) \arrow[r] \arrow[d]&\Aut_{A}(B_m)\arrow[r] \arrow[d]& \Aut_{A_{\tors}}(B_{m,\tors})\arrow[d]\arrow[r]&1\\
0\arrow[r]&\Hom\left(\dfrac{B_n}{A+B_{n,\tors}},B_{n,\tors}\right) \arrow[r] &\Aut_{A}(B_n)\arrow[r] & \Aut_{A_{\tors}}(B_{n,\tors})\arrow[r]&1
\end{tikzcd}
\end{equation*}
Notice that the rows of this diagram are exact and that every vertical map is surjective by Propostion \ref{prop:ResIsSurj}. In fact, we have
\begin{itemize}
  \item The map on the left is, once we apply Proposition \ref{prop:AutIsHom},
    the restriction map
    \begin{align*}
      \Aut_{A+B_{m,\tors}}(B_m)\to \Aut_{A+B_{n,\tors}}(B_n)
    \end{align*}
    and $A+B_{n,\tors}$ is $A$-normal in $A+B_{m,\tors}$ by Proposition \ref{prop:NormalityCriterion} (notice that the image of any $A$-homomorphism from $A+B_{n,\tors}$ to $A+B_{m,\tors}$ is contained in  $A+B_{n,\tors}$).
\item The group $B_n$ is $A$-normal in $B_m$ by Example \ref{exa:criterion}(2) and Proposition \ref{prop:NormalityCriterion}.
\item The groups $B_{n,\tors}$ and $B_{m,\tors}$ are $s$-extensions of $A_{\tors}$, and $B_{n,\tors}$ is $A_{\tors}$-normal in $B_{m,\tors}$ by Example \ref{exa:criterion}(1) and Proposition \ref{prop:NormalityCriterion}.
\end{itemize}

\begin{prop}
\label{prop:ProjLimit}
Let $B$ be an $s$-extension of $A$.
The groups $\Aut_A(B_n)$ together with the natural restriction maps $\rho_{nm}:\Aut_{A}(B_m)\to \Aut_{A}(B_n)$ for $n\mid m$ form a projective system. The group $\Aut_A(B)$ together with the natural restriction maps $\rho_n:\Aut_A(B)\to \Aut_{A}(B_n)$ is the limit of this projective system.
\end{prop}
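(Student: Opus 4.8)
The plan is to verify the two formal claims — that $\{\Aut_A(B_n),\rho_{nm}\}$ is a projective system and that $(\Aut_A(B),(\rho_n)_n)$ is a cone over it — and then to prove the universal property of the inverse limit in the category of groups. For $n\mid m$ the subgroup $B_n$ is $A$-normal in $B_m$ by Example~\ref{exa:criterion}(2) and Proposition~\ref{prop:NormalityCriterion}, so restriction defines the group homomorphism $\rho_{nm}\colon\Aut_A(B_m)\to\Aut_A(B_n)$; likewise $B_n$ is $A$-normal in $B$ and $\rho_n\colon\Aut_A(B)\to\Aut_A(B_n)$ is defined. Functoriality of restriction gives $\rho_{nn}=\id$, $\rho_{nm}\circ\rho_{m\ell}=\rho_{n\ell}$ whenever $n\mid m\mid\ell$, and $\rho_{nm}\circ\rho_m=\rho_n$; since the positive integers ordered by divisibility are directed, the first claims follow.

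For the universal property I would use the structural facts that each $B_n$ is a subgroup of $B$, that $B=\bigcup_{n\geq1}B_n$, and that for $b\in B$ one has $b\in B_m$ exactly when $m$ is divisible by the (finite) order of $b+A$ in the torsion group $B/A$; in particular any two elements of $B$ lie in a common $B_n$. Given a group $G$ with homomorphisms $f_n\colon G\to\Aut_A(B_n)$ such that $\rho_{nm}\circ f_m=f_n$ for $n\mid m$, define $f\colon G\to\Aut_A(B)$ by letting $f(g)$ send $b\in B$ to $f_n(g)(b)$ for any $n$ with $b\in B_n$. The relations $\rho_{nm}\circ f_m=f_n$ make this independent of the choice of $n$ (if $b\in B_n\cap B_m$ then $b\in B_k$ for $k$ the order of $b+A$ in $B/A$, and $\rho_{kn}(f_n(g))=f_k(g)=\rho_{km}(f_m(g))$), and one checks directly that $f(g)$ is an endomorphism of $B$ that is the identity on $A$, that $f$ is a group homomorphism (as $f(gg')$ and $f(g)\circ f(g')$ agree on each $B_n$), that $\rho_n\circ f=f_n$, and that $f$ is the unique such map (any competitor $f'$ satisfies $f'(g)(b)=(\rho_n f'(g))(b)=f_n(g)(b)$ for $b\in B_n$).

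The one step worth stating carefully — it is the crux rather than an obstacle — is that $f(g)$ is bijective, so that it genuinely lies in $\Aut_A(B)$: it is injective because $f(g)(b)=0$ with $b\in B_n$ forces $f_n(g)(b)=0$ and $f_n(g)$ is injective; it is surjective because any $c\in B$ lies in some $B_n$ and $f_n(g)\in\Aut_A(B_n)$ is onto $B_n$. This is exactly where one uses that the $f_n(g)$ are \emph{automorphisms} of the $B_n$ (ultimately resting on Proposition~\ref{prop:ResIsSurj} and Lemma~\ref{lem:InjSurj}); the remainder is formal manipulation with restriction maps and the exhaustion $B=\bigcup_n B_n$. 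Finally, since each $B_n/A$ has exponent dividing $n$, Proposition~\ref{prop:FiniteAutIsFinite} shows every $\Aut_A(B_n)$ is finite, so the limit — and hence $\Aut_A(B)$ — is naturally a profinite group.
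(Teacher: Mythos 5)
Your proof is correct and follows the same route as the paper: the paper likewise defines the map to $\Aut_A(B)$ by $\varphi(g)(b):=\varphi_n(g)(b)$ for $b\in B_n$ and then asserts that well-definedness and uniqueness are easy to check. You simply spell out the details the paper leaves implicit (normality of $B_n$, agreement on overlaps via a common $B_k$, and bijectivity of the glued map), so there is nothing to object to.
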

\begin{proof}
By Proposition \ref{prop:ResIsSurj} the restriction map $\rho_m:\Aut_A(B)\to \Aut_{A}(B_m)$ is surjective for every $m$. Since for every $n\mid m$ we have $\rho_n=\rho_{nm}\circ\rho_m$, the map $\rho_{nm}$ is surjective as well. These maps are clearly compatible, so they form a projective system.

Let $G$ be any group with a compatible system of maps $\varphi_n:G\to\Aut_{A}(B_n)$. Then we can define a map $\varphi:G\to \Aut_A(B)$ by letting for every $g\in G$ and every $b\in B$
\begin{align*}
\varphi(g)(b):=\varphi_n(g)(b)
\end{align*}
where $n$ is such that $b\in B_n$. It is easy to check that this map is well-defined and that it is the unique map $G\to \Aut_A(B)$ compatible with the projections.
\end{proof}

From the above proposition it follows that the projective limit of these exact sequences is the same exact sequence of Proposition \ref{prop:SequenceAut}:
\begin{align*}
0\to \Hom\left(\frac{B}{A+B_{\tors}},B_{\tors}\right)\to \Aut_{A}(B)\to\Aut_{A_{\tors}}(B_{\tors})\to 1\,.
\end{align*}

Since this sequence is a projective limit we can endow the groups involved with the natural profinite topology by giving each finite group the discrete topology. The maps appearing in the exact sequence above are then continuous and, in particular, $\Hom\left(B/(A+B_{\tors}),B_{\tors}\right)$ and $\Aut_{A_{\tors}}(B_{\tors})$ have the subspace and quotient topology, respectively (see Lemma \ref{lem:generalTopologyExactSequenceLimit}). Notice also that $\Hom\left(B/(A+B_{\tors}),B_{\tors}\right)$, being the kernel of a continuous homomorphism, is a closed normal subgroup of $\Aut_A(B)$.

We have obtained the following refinement of Proposition \ref{prop:SequenceAut}.

\begin{prop}
\label{prop:SequenceAutContinuous}
Let $B$ be an $s$-extension of $A$.
The group $\Aut_A(B)$ together with the natural restriction maps is the projective limit of the finite groups $\Aut_{A}(B_n)$, thus it is a profinite group.
In particular, $\Aut_A(B)$ is a compact Hausdorff topological group.

There is an exact sequence of profinite groups
\begin{align*}
0\to \Hom\left(\frac{B}{A+B_{\tors}},B_{\tors}\right)\to \Aut_{A}(B)\to\Aut_{A_{\tors}}(B_{\tors})\to 1\,.
\end{align*}
Moreover, the group $\Aut_{A_{\tors}}(B_{\tors})$ acts continuously on $\Hom\left(B/(A+B_{\tors}),B_{\tors}\right)$ by composition.
\end{prop}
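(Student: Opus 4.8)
The proof of Proposition~\ref{prop:SequenceAutContinuous} is really an assembly of the ingredients already in place, so the plan is to carefully thread together Propositions~\ref{prop:ProjLimit}, \ref{prop:FiniteAutIsFinite}, \ref{prop:SequenceAut}, and Lemma~\ref{lem:generalTopologyExactSequenceLimit}. First I would recall that by Proposition~\ref{prop:ProjLimit} the group $\Aut_A(B)$, equipped with the natural restriction maps $\rho_n$, is the projective limit of the groups $\Aut_A(B_n)$ over the directed set of positive integers ordered by divisibility; and since each $B_n/A$ has finite exponent (indeed exponent dividing $n$), Proposition~\ref{prop:FiniteAutIsFinite} tells us each $\Aut_A(B_n)$ is a finite group. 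Giving each of these the discrete topology makes $\Aut_A(B)$ a profinite group, hence a compact Hausdorff topological group; this settles the first paragraph of the statement.

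For the exact sequence, I would apply Proposition~\ref{prop:SequenceAut} not only to $B$ but simultaneously to each $B_n$, obtaining for every $n$ the exact sequence
\begin{align*}
0\to \Hom\left(\frac{B_n}{A+B_{n,\tors}},B_{n,\tors}\right)\to \Aut_{A}(B_n)\to\Aut_{A_{\tors}}(B_{n,\tors})\to 1\,,
\end{align*}
and I would invoke the commutative diagram displayed just before Proposition~\ref{prop:ProjLimit}, whose rows are these sequences and whose vertical restriction maps (for $n\mid m$) are surjective by Proposition~\ref{prop:ResIsSurj}, as explained in the three bullet points there. This exhibits $\{\mathcal{A}_n\}$ as a projective system of short exact sequences of finite (hence discrete) topological groups, in which the subgroup and the quotient carry the subspace and quotient topology respectively. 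Taking the limit, Proposition~\ref{prop:ProjLimit} identifies the middle term with $\Aut_A(B)$, while the limits of the outer terms are $\Hom\left(B/(A+B_{\tors}),B_{\tors}\right)$ and $\Aut_{A_{\tors}}(B_{\tors})$ — here I would note $B_{\tors}=\bigcup_n B_{n,\tors}$ and $B/(A+B_{\tors})=\varinjlim_n B_n/(A+B_{n,\tors})$, so that $\Hom$ out of the colimit is the inverse limit of the finite-level $\Hom$ groups, matching the left column, and likewise for the automorphism groups of the torsion parts. Exactness is preserved because projective limits are left exact and surjectivity of the right-hand maps survives the limit thanks to the Mittag-Leffler condition guaranteed by the surjectivity of all the transition maps. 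Lemma~\ref{lem:generalTopologyExactSequenceLimit} then gives that $\Hom\left(B/(A+B_{\tors}),B_{\tors}\right)$ carries the subspace topology and $\Aut_{A_{\tors}}(B_{\tors})$ the quotient topology, so all maps are continuous.

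Finally, for the last sentence: the fact that the action of $\Aut_{A_{\tors}}(B_{\tors})$ on $\Hom\left(B/(A+B_{\tors}),B_{\tors}\right)$ is given by composition is exactly the content of Proposition~\ref{prop:SequenceAut}; what remains is to check continuity. I would argue that the action on each finite level is a map of finite discrete sets, hence continuous, and that these level-wise actions are compatible with the restriction maps, so the action on the limit is continuous — alternatively, observe that the conjugation action of a profinite group on a closed normal subgroup (here $\Hom\left(B/(A+B_{\tors}),B_{\tors}\right)$ inside $\Aut_A(B)$) is automatically continuous, and the action in question factors through this conjugation via the quotient $\Aut_A(B)\to\Aut_{A_{\tors}}(B_{\tors})$, which is continuous by the above. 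The main (and only mildly delicate) obstacle is the bookkeeping in the second paragraph: making sure the three functors $\Hom(\,\cdot\,,\,\cdot\,)$, $\Aut_A(\,\cdot\,)$, $\Aut_{A_{\tors}}(\,\cdot\,)$ genuinely commute with the relevant limits and colimits and that exactness and the topological identifications pass cleanly to the limit; everything else is a direct citation of the earlier results.
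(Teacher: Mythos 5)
Your proposal is correct and follows essentially the same route as the paper, which presents this proposition as a summary of the discussion immediately preceding it and cites exactly the ingredients you assemble (Propositions \ref{prop:FiniteAutIsFinite}, \ref{prop:ProjLimit} and \ref{prop:SequenceAut}, the commutative diagram of finite-level exact sequences with surjective transition maps, and Lemma \ref{lem:generalTopologyExactSequenceLimit}). The only cosmetic difference is that the paper obtains exactness of the limit sequence by noting that it coincides with the sequence of Proposition \ref{prop:SequenceAut} already established for $B$ itself, rather than re-deriving it from left-exactness of projective limits and surjectivity of the transition maps as you do.
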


\subsection{Full $s$-extensions}

In this section we give a characterization of the \emph{maximal $s$-extensions} of \cite[Section 2.2]{pmaster}. We will not prove here the maximality of these extensions in the sense of \cite[Theorem 2.6]{pmaster}, hence the change of name to \emph{full $s$-extensions.} Our motivation for the study of these kind of extensions is that they provide a useful abstraction for the set of points of a commutative algebraic group that have a multiple in a fixed subgroup of rational points, in other words it is ``full'' of all division points. However, the equivalence of the two definitions follows immediately from Proposition \ref{prop:StructureOfB}.

\begin{defi}
Let $A$ be a finitely generated abelian group. An $s$-extension $\Gamma$ of $A$ is called \emph{full} if $\Gamma$ is a divisible abelian group and $\Gamma_{\tors}\cong (\Q/\Z)^s$.
\end{defi}

\begin{rem}
Recall from Remark \ref{rem:necCondAdmitsSExtension} that a necessary condition
for $A$ to admit any $s$-extension is that $A_{\tors}$ can be embedded in
$(\Q/\Z)^s$. This condition is also sufficient for $A$ to admit a full
$s$-extension. To see this, fix an isomorphism $A\cong \Z^{\rk(A)}\oplus T$,
where $T$ is a finite subgroup of $(\Q/\Z)^s$. Then the natural inclusion
$\Z^{\rk(A)}\oplus T\hookrightarrow \Q^{\rk(A)}\oplus (\Q/\Z)^s$ realizes
$\Q^{\rk(A)}\oplus (\Q/\Z)^s$ as a full $s$-extension of $A$.
\end{rem}

\begin{rem}
\label{rem:DirectSum}
Let $\Gamma$ be a full $s$-extension of a finitely generated abelian group $A$. Then $\Gamma_{\tors}\cong (\Q/\Z)^s$ is a divisible abelian group. It follows that the exact sequence
\begin{align*}
0\to \Gamma_{\tors}\to \Gamma\to\Gamma/\Gamma_{\tors}\to 0
\end{align*}
splits (non-canonically), so that $\Gamma\cong (\Gamma/\Gamma_{\tors})\oplus\Gamma_{\tors}\cong (\Gamma/\Gamma_{\tors})\oplus (\Q/\Z)^s$.
\end{rem}

The following proposition shows in particular that a finitely generated abelian group $A$ can have at most one full $s$-extension, up to (a not necessarily unique) isomorphism.

\begin{prop}
\label{prop:StructureOfB}
Let $A$ be a finitely generated abelian group of rank $r>0$ which admits a full $s$-extension $\Gamma$. There is a canonical isomorphism
\begin{align}
\label{eq:CanonicalIsomStructureofB}
\Gamma/\Gamma_{\tors}\overset{\sim}{\to} A\otimes_{\Z}\Q
\end{align}
that sends the subgroup $A/A_{\tors}$ of $\Gamma/\Gamma_{\tors}$ to $\Abar:=\set{a\otimes 1\mid a\in A}$.

Moreover, there is an isomorphism
\begin{align}
\label{eq:nonCanonicalIsomStructureofB}
\Gamma\overset{\sim}{\to} \Q^r\oplus (\Q/\Z)^s
\end{align}
that sends $A$ to $\Z^r\subseteq \Q^r$.
\end{prop}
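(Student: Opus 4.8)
The plan is to first construct the canonical isomorphism \eqref{eq:CanonicalIsomStructureofB} and then deduce the non-canonical one \eqref{eq:nonCanonicalIsomStructureofB} from the splitting observed in Remark \ref{rem:DirectSum}. For the canonical map, recall that $\Gamma/A$ is torsion, so every element of $\Gamma/\Gamma_{\tors}$ is a torsion element over $(A+\Gamma_{\tors})/\Gamma_{\tors}\cong A/A_{\tors}$; in other words $\Gamma/\Gamma_{\tors}$ is a torsion-free divisible abelian group containing $A/A_{\tors}$ as a subgroup with torsion quotient. A torsion-free divisible abelian group is a $\Q$-vector space, and the inclusion $A/A_{\tors}\hookrightarrow \Gamma/\Gamma_{\tors}$ with torsion quotient forces $\Gamma/\Gamma_{\tors}$ to be the divisible hull of $A/A_{\tors}$. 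Since $A/A_{\tors}\cong\Z^r$, the universal property of $A\otimes_\Z\Q$ (together with the fact that every element of $A\otimes_\Z\Q$ is of the form $\frac 1n(a\otimes 1)$) gives a canonical isomorphism $\Gamma/\Gamma_{\tors}\xrightarrow{\sim}A\otimes_\Z\Q$: the composite $A/A_{\tors}\hookrightarrow\Gamma/\Gamma_{\tors}$ is injective with divisible target, hence extends uniquely to $A\otimes_\Z\Q$, and this extension is an isomorphism because both sides have the same $\Q$-dimension $r$ and it is surjective on a generating set. By construction it sends $A/A_{\tors}$ onto $\overline A$.

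For the second assertion, use Remark \ref{rem:DirectSum} to fix a splitting $\Gamma\cong(\Gamma/\Gamma_{\tors})\oplus\Gamma_{\tors}$. Composing the $(\Gamma/\Gamma_{\tors})$-factor with the inverse of \eqref{eq:CanonicalIsomStructureofB} and then with any $\Q$-linear isomorphism $A\otimes_\Z\Q\xrightarrow{\sim}\Q^r$ that carries the lattice $\overline A\cong\Z^r$ onto $\Z^r\subseteq\Q^r$, and identifying $\Gamma_{\tors}\cong(\Q/\Z)^s$ by fullness, we obtain $\Gamma\xrightarrow{\sim}\Q^r\oplus(\Q/\Z)^s$. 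It remains to see that one can arrange the image of $A$ to be exactly $\Z^r$: after the splitting, an element $a\in A$ maps to $(\overline a, t(a))$ for some function $t\colon A\to\Gamma_{\tors}$, and the subtlety is that $t$ need not be zero, so $A$ lands in $\Z^r\oplus(\text{finite subgroup of }(\Q/\Z)^s)$ rather than in $\Z^r$. One fixes this by modifying the splitting: since $A$ is finitely generated and $\Gamma_{\tors}$ is divisible, extend the homomorphism $t$ (defined on $A$, which is free modulo torsion — note $t$ kills $A_{\tors}$ since the splitting restricts correctly there, or adjust on torsion separately) to a homomorphism $\tilde t\colon\Gamma/\Gamma_{\tors}\to\Gamma_{\tors}$ using divisibility of $\Gamma_{\tors}$ and freeness of $A/A_{\tors}$ inside the $\Q$-vector space $\Gamma/\Gamma_{\tors}$, and replace the splitting section $\sigma$ by $\sigma - \tilde t\circ(\text{projection})$; this new section still splits the sequence and now sends each $a\in A$ to $(\overline a,0)$.

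The main obstacle is precisely this last point: reconciling the \emph{non-canonical} splitting of Remark \ref{rem:DirectSum} with the requirement that the image of $A$ be the standard lattice $\Z^r\subseteq\Q^r$ with trivial torsion component. The clean way to do this is to observe that choosing an isomorphism $\Gamma\xrightarrow{\sim}\Q^r\oplus(\Q/\Z)^s$ sending $A$ to $\Z^r$ is the same as choosing a section of $\Gamma\twoheadrightarrow\Gamma/\Gamma_{\tors}$ that contains $\overline A$ in its image together with a basis identification; such a section exists because $\overline A$ is a free subgroup of the $\Q$-vector space $\Gamma/\Gamma_{\tors}$ and we may first lift a $\Z$-basis of $\overline A$ to elements of $A\subseteq\Gamma$ (namely a basis of $A$ modulo $A_{\tors}$), then extend $\Q$-linearly over a $\Q$-basis of $\Gamma/\Gamma_{\tors}$ refining that lattice basis, using divisibility of $\Gamma$. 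This produces a $\Q$-vector subspace $V\subseteq\Gamma$ with $V\cap\Gamma_{\tors}=0$, $V\oplus\Gamma_{\tors}=\Gamma$, and $A\cap V$ containing a full-rank copy of $\overline A$; after adjusting so that $A_{\tors}\subseteq\Gamma_{\tors}$ maps correctly, one gets $A\subseteq V$ identified with $\Z^r$, completing the proof.
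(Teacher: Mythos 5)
Your proof is correct, but it takes a genuinely different route from the paper's in both halves. For the canonical isomorphism \eqref{eq:CanonicalIsomStructureofB}, the paper writes down an explicit map $\psi:\Gamma\to A\otimes_\Z\Q$, $b\mapsto (n_bb)\otimes\frac{1}{n_b}$ with $n_b$ minimal such that $n_bb\in A$, and verifies by hand that it is a homomorphism, that it is surjective (using divisibility of $\Gamma$), and that $\ker\psi=\Gamma_{\tors}$; you instead go in the opposite direction, observing that $\Gamma/\Gamma_{\tors}$ is torsion-free and divisible, hence a $\Q$-vector space that is the divisible hull of the image of $A/A_{\tors}$, and extend the inclusion $\Q$-linearly via the universal property of $A\otimes_\Z\Q$. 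The two maps are inverse to each other, so this is a legitimate, arguably cleaner, packaging of the same content. For \eqref{eq:nonCanonicalIsomStructureofB} your argument is actually \emph{more} careful than the paper's: the paper simply composes with ``any'' splitting $\Gamma\cong(\Gamma/\Gamma_{\tors})\oplus\Gamma_{\tors}$ from Remark \ref{rem:DirectSum}, which, read literally, only places the image of $A$ inside $\Z^r\oplus(\text{finite torsion})$ rather than inside $\Z^r\times\{0\}$; you correctly identify this gap and repair it by correcting the section by a homomorphism $\tilde t$ into the divisible (hence injective) group $\Gamma_{\tors}$, i.e.\ by choosing a splitting adapted to a lift of a $\Z$-basis of $A/A_{\tors}$. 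One small slip: your parenthetical claim that ``$t$ kills $A_{\tors}$ since the splitting restricts correctly there'' is false (for $a\in A_{\tors}$ one has $t(a)=a$), but your hedge ``adjust on torsion separately'' and your final basis-lifting construction avoid relying on it, and the issue is moot in the paper's applications, where $A$ is torsion-free.
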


\begin{proof}
Since $\Gamma/A$ is torsion, for every $b\in \Gamma$ there is an integer $n\geq 1$
such that $nb\in A$. Let $n_b:=\min\{n\in \NN_{\geq1}\mid nb\in A\}$. We define a map
\begin{align*}
\psi:\Gamma&\longrightarrow A\otimes_\Z\Q\\
b&\longmapsto (n_bb)\otimes \frac{1}{n_b}.
\end{align*}
The map $\psi$ is a group homomorphism. To see this, notice first that for every $b\in \Gamma$ and every $n\in\mathbb{N}_{\geq 1}$ such that $nb\in A$ we have $(nb)\otimes\frac{1}{n}=(n_bb)\otimes\frac{1}{n_b}$. Then for every $b,c\in \Gamma$ we have
\begin{align*}
\psi(b+c)=&n_{b+c}(b+c)\otimes\frac{1}{n_{b+c}}=n_bn_c(b+c)\otimes\frac{1}{n_bn_c}=\\=&(n_bn_cb)\otimes\frac{1}{n_bn_c}+(n_bn_cc)\otimes\frac{1}{n_bn_c}=\\=&(n_bb)\otimes\frac{1}{n_b}+(n_cc)\otimes\frac{1}{n_c}=\\
=&\psi(b)+\psi(c).
\end{align*}
The map $\psi$ is also surjective: in fact, let $a\in A$ and $n\in\NN_{\geq 1}$. Since $\Gamma$ is divisible, there must be an element $b\in \Gamma$ such that $nb=a$, and thus $\psi(b)=a\otimes\frac{1}{n}$.

Now we show that the $\ker\psi=\Gamma_{\tors}$. If $b\in \Gamma$ has order $n\geq 1$, then $\psi(b)=(nb)\otimes\frac{1}{n}=0$, showing that $b\in \ker\psi$. On the other hand, if $\psi(b)=(n_bb)\otimes \frac{1}{n_b}=0$, then necessarily $n_bb=0$, so that $b\in \Gamma_{\tors}$. So we get an isomorphism which sends $A/A_{\tors}$ to $\Abar$.

For the second part, since $A$ has rank $r$ we have $A\otimes_{\Z}\Q\cong \Q^r$. It follows from the first part that there is an isomorphism $\Gamma/\Gamma_{\tors}\overset{\sim}{\to}\Q^r$ that sends $A/A_{\tors}$ to $\Z^r\subseteq \Q^r$.
The conclusion follows by combining this with any isomorphism $\Gamma\overset{\sim}{\to}(\Gamma/\Gamma_{\tors})\oplus (\Q/\Z)^s$ (see Remark \ref{rem:DirectSum}).
\end{proof}

\begin{rem}
In Proposition \ref{prop:StructureOfB} the isomorphism \eqref{eq:CanonicalIsomStructureofB} is canonical, while the isomorphism \eqref{eq:nonCanonicalIsomStructureofB} depends on the choice of three isomorphisms: an isomorphism between $A\otimes_{\Z}\Q$ and $\Q^r$ (or, equivalently, a choice of a $\Z$-basis of $A/A_{\tors}$), a splitting isomorphism $\Gamma\cong (\Gamma/\Gamma_{\tors})\oplus \Gamma_{\tors}$ (see Remark \ref{rem:DirectSum}) and an isomorphism $\Gamma_{\tors}\cong (\Q/\Z)^s$.
\end{rem}

\subsection{Automorphisms of full $s$-extensions of torsion-free groups}

For this section, let $A$ be a finitely generated and torsion-free abelian group of rank $r>0$ and let $\Gamma$ be a full $s$-extension of $A$.
Notice that, since $A_{\tors}=0$, we have $\Aut_{A_{\tors}}(\Gamma_{\tors})=\Aut(\Gamma_{\tors})$ and $\Gamma_{n,\tors}=\Gamma[n]$ for every $n>0$. By Proposition \ref{prop:StructureOfB} we can fix an isomorphism
\begin{align*}
\Phi:\Gamma\xrightarrow{\sim} \Q^r\oplus (\Q/\Z)^s
\end{align*}
that maps $A$ onto $\Z^r\subseteq \Q^r$. This induces isomorphisms
\begin{align*}
\Phi_{\kumm}:\dfrac{\Gamma}{A+\Gamma_{\tors}}\xrightarrow{\sim} (\Q/\Z)^r,\qquad \Phi_{\tors}:\Gamma_{\tors}\xrightarrow{\sim} (\Q/\Z)^s.
\end{align*}
Recall from Remark \ref{rem:HigherDimPontryagin} that we have canonical isomorphisms
\begin{align*}
\Aut((\Q/\Z)^s)\cong \GL_s(\hat\Z), \qquad \Hom((\Q/\Z)^r,(\Q/\Z)^s)\cong \Mat_{s\times r}(\hat\Z)
\end{align*}
under which the action of $\Aut((\Q/\Z)^s)$ on $\Hom((\Q/\Z)^r,(\Q/\Z)^s)$ given by composition becomes matrix multiplication on the left. So we get isomorphisms
\begin{align*}
\Phi^*_{\kumm}:\Hom\left(\dfrac{\Gamma}{A+\Gamma_{\tors}},\Gamma_{\tors}\right)\xrightarrow{\sim} \Mat_{s\times r}(\hat\Z), \qquad 
\Phi_{\tors}^*:\Aut(\Gamma_{\tors})\xrightarrow{\sim} \GL_s(\hat\Z).
\end{align*}

On the finite level, these isomorphisms induce, for every $n>0$, isomorphisms
\begin{align*}
\psi_n: \Hom\left(\dfrac{\Gamma_n}{A+\Gamma[n]},\Gamma[n]\right)\xrightarrow{\sim} \Mat_{s\times r}\left({\Z}/{n\Z}\right), \qquad\varphi_n:\Aut(\Gamma[n])\xrightarrow{\sim}  \GL_s\left({\Z}/{n\Z}\right)
\end{align*}
which are compatible with the natural projections, in the sense that for every $n\mid m$ the diagrams
\begin{equation*}
\begin{tikzcd}[column sep=1.9em] 
\Hom\left(\dfrac{\Gamma_m}{A+\Gamma[m]},\Gamma[m]\right)\arrow{r}{\psi_m} \arrow{d} &\Mat_{s\times r}\left({\Z}/{m\Z}\right)\arrow{d}\\
\Hom\left(\dfrac{\Gamma_n}{A+\Gamma[n]},\Gamma[n]\right)\arrow{r}{\psi_n}  &\Mat_{s\times r}\left({\Z}/{n\Z}\right)
\end{tikzcd}
\end{equation*}
and
\begin{equation*}
\begin{tikzcd}[column sep=1.9em] 
\Aut(\Gamma[m])\arrow{r}{\varphi_m} \arrow{d} &\GL_s(\Z/m\Z)\arrow{d}\\
\Aut(\Gamma[n])\arrow{r}{\varphi_n}  &\GL_s(\Z/n\Z)
\end{tikzcd}
\end{equation*}
commute. This shows that the topology with which we endowed our automorphism groups coincides with the natural topology of the $\hat\Z$-matrix rings, as stated in the following proposition.

\begin{prop}
\label{prop:AutFullStruct}
Let $A$ be a finitely generated and torsion free abelian group of rank $r>0$ and let $\Gamma$ be a full $s$-extension of $A$. Consider the group $\Aut_A(\Gamma)$ with the profinite topology described in Section \ref{sec:profinite} and the groups $\Mat_{s\times r}(\hat\Z)$ and $\GL_s(\hat\Z)$ with the topology induced by the profinite topology of $\hat\Z$.

Then every isomorphism of abelian groups
\begin{align*}
\Phi:\Gamma\xrightarrow{\sim} \Q^r\oplus (\Q/\Z)^s
\end{align*}
that maps $A$ onto $\Z^r\subseteq \Q^r$ induces isomorphisms of topological groups
\begin{align*}
\Phi^*_{\kumm}:\Hom\left(\dfrac{\Gamma}{A+\Gamma_{\tors}},\Gamma_{\tors}\right)\xrightarrow{\sim} \Mat_{s\times r}(\hat\Z), \qquad \Phi_{\tors}^*:\Aut(\Gamma_{\tors})\xrightarrow{\sim} \GL_s(\hat\Z)\,.
\end{align*}
Moreover, the action of $\Aut(\Gamma_{\tors})$ on $\Hom\left({\Gamma}/(A+\Gamma_{\tors}),\Gamma_{\tors}\right)$ given by composition is identified under these isomorphisms with matrix multiplication on the left.
\end{prop}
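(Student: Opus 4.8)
The plan is to unwind the definitions: the isomorphism $\Phi$ directly produces the two quotient/restriction isomorphisms $\Phi_{\kumm}$ and $\Phi_{\tors}$, the claimed maps $\Phi^*_{\kumm}$ and $\Phi^*_{\tors}$ are obtained by composing these with the canonical identifications of Remark \ref{rem:HigherDimPontryagin}, and the topological and equivariance assertions then follow by passing to the limit over the finite levels, exactly as sketched in the discussion preceding the statement.

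First I would observe that any group isomorphism preserves the torsion subgroup, so $\Phi$ carries $\Gamma_{\tors}$ onto the torsion subgroup $(\Q/\Z)^s$ of $\Q^r\oplus(\Q/\Z)^s$, while it carries $A$ onto $\Z^r$ by hypothesis; hence it carries $A+\Gamma_{\tors}$ onto $\Z^r\oplus(\Q/\Z)^s$ and induces isomorphisms
\begin{align*}
\Phi_{\kumm}\colon \frac{\Gamma}{A+\Gamma_{\tors}}\xrightarrow{\sim}\frac{\Q^r\oplus(\Q/\Z)^s}{\Z^r\oplus(\Q/\Z)^s}\cong(\Q/\Z)^r,\qquad \Phi_{\tors}\colon\Gamma_{\tors}\xrightarrow{\sim}(\Q/\Z)^s.
\end{align*}
Composing conjugation/transport by these with the identifications $\Aut((\Q/\Z)^s)\cong\GL_s(\hat\Z)$ and $\Hom((\Q/\Z)^r,(\Q/\Z)^s)\cong\Mat_{s\times r}(\hat\Z)$ of Remark \ref{rem:HigherDimPontryagin}, that is $h\mapsto \Phi_{\tors}\circ h\circ\Phi_{\tors}^{-1}$ for $\Aut$ and $f\mapsto \Phi_{\tors}\circ f\circ\Phi_{\kumm}^{-1}$ for $\Hom$, gives the group isomorphisms $\Phi^*_{\tors}$ and $\Phi^*_{\kumm}$.

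For the topology I would restrict everything to the finite level: since $A$ is torsion-free, $\Gamma_{n,\tors}=\Gamma[n]$, and for each $n$ the map $\Phi$ restricts to isomorphisms $\Gamma[n]\to(\tfrac1n\Z/\Z)^s$ and $\Gamma_n/(A+\Gamma[n])\to(\tfrac1n\Z/\Z)^r$, hence (via Remark \ref{rem:HigherDimPontryagin} at level $n$) to the isomorphisms $\varphi_n\colon\Aut(\Gamma[n])\xrightarrow{\sim}\GL_s(\Z/n\Z)$ and $\psi_n\colon\Hom(\Gamma_n/(A+\Gamma[n]),\Gamma[n])\xrightarrow{\sim}\Mat_{s\times r}(\Z/n\Z)$ appearing before the statement. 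Because those identifications are compatible with the projections $\Z/m\Z\to\Z/n\Z$, the two displayed squares commute. Now by Proposition \ref{prop:SequenceAutContinuous} (through Lemma \ref{lem:generalTopologyExactSequenceLimit}) the group $\Aut(\Gamma_{\tors})=\Aut_{A_{\tors}}(\Gamma_{\tors})$ carries the projective limit topology of the $\Aut(\Gamma[n])$ and $\Hom(\Gamma/(A+\Gamma_{\tors}),\Gamma_{\tors})$ the projective limit topology of the $\Hom(\Gamma_n/(A+\Gamma[n]),\Gamma[n])$; on the other side $\GL_s(\hat\Z)=\varprojlim_n\GL_s(\Z/n\Z)$ and $\Mat_{s\times r}(\hat\Z)=\varprojlim_n\Mat_{s\times r}(\Z/n\Z)$ with the limit topologies, by definition of the profinite topology of $\hat\Z$. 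Since the $\varphi_n$ and $\psi_n$ form compatible isomorphisms of these projective systems, they assemble into isomorphisms of the limits that are simultaneously group isomorphisms and homeomorphisms, and these are precisely $\Phi^*_{\tors}$ and $\Phi^*_{\kumm}$.

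Finally, for the equivariance statement: by Proposition \ref{prop:SequenceAutContinuous} the action of $\Aut(\Gamma_{\tors})$ on $\Hom(\Gamma/(A+\Gamma_{\tors}),\Gamma_{\tors})$ is $(\tau,\psi)\mapsto\tau\circ\psi$, which under $\Phi_{\tors}$ and $\Phi_{\kumm}$ becomes $(h,f)\mapsto h\circ f$ on $\Aut((\Q/\Z)^s)\times\Hom((\Q/\Z)^r,(\Q/\Z)^s)$; by Remark \ref{rem:HigherDimPontryagin}, in which $\varphi\mapsto D_\varphi$ is noted to be compatible with composition (a ring homomorphism when $r=s$), the matrix of $h\circ f$ is $D_h\cdot D_f$, so the action is carried to left multiplication of $\GL_s(\hat\Z)$ on $\Mat_{s\times r}(\hat\Z)$, as claimed. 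There is essentially no substantive obstacle here: the only point requiring care is the compatibility of the identifications of Remark \ref{rem:HigherDimPontryagin} with the transition maps, i.e. the commutativity of the two displayed squares, which is the routine diagram chase already recorded in the paragraph before the proposition.
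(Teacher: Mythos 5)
Your proposal is correct and follows essentially the same route as the paper, which presents this argument as the discussion immediately preceding the proposition rather than as a separate proof: the induced isomorphisms $\Phi_{\kumm}$, $\Phi_{\tors}$ combined with the identifications of Remark \ref{rem:HigherDimPontryagin}, compatibility with the finite-level maps $\varphi_n$, $\psi_n$ to match the profinite topologies, and the observation that composition becomes left matrix multiplication. No gaps.
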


\section{Some linear algebra}
\label{sec:LinearAlgebra}

Motivated by the results of the previous sections we will now establish some results of linear algebra over the ring $\hat\Z$. In particular, we are interested in certain properties of $\Mat_{s\times r}(\hat\Z)$ as a left $\Mat_{s\times s}(\hat\Z)$-module.

Fix for this section two non-negative integers $s$ and $r$.

\begin{prop}
\label{prop:HigherDimensionalPontryagin}
Let $R:=\Mat_{s\times s}(\hat\Z)$ and view $M:=\Mat_{s\times r}(\hat\Z)$ as a left $R$-module. Let $V\subseteq M$ be a left $R$-submodule. Assume that there is a positive integer $n$ such that, viewing the elements of $V$ as maps $(\Q/\Z)^r\to(\Q/\Z)^s$, we have
\begin{align}
\label{eqn:KernelCondition}
\bigcap_{f\in V}\ker f\subseteq \left(\Q/\Z\right)^r[n].
\end{align}
Then $V\supseteq nM$.
\end{prop}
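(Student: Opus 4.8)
The plan is to dualize: use Pontryagin duality to turn the statement about an $R$-submodule $V$ of $M=\Mat_{s\times r}(\hat\Z)$ into a statement about a quotient of a discrete torsion group, where the hypothesis \eqref{eqn:KernelCondition} becomes a bound on an exponent. Concretely, via Remark \ref{rem:HigherDimPontryagin} we identify $M$ with $\Hom((\Q/\Z)^r,(\Q/\Z)^s)$, and under the evaluation pairing this is the dual of $(\Q/\Z)^r \otimes \left((\Q/\Z)^s\right)^\wedge \cong (\Q/\Z)^r\otimes\hat\Z^s$. Rather than chase tensor products, I would argue more directly: decompose $M=\Mat_{s\times r}(\hat\Z)$ as $r$ copies of the column module $\Mat_{s\times 1}(\hat\Z)=\hat\Z^s$ (as a left $R$-module), and similarly write an element $f\in V$ as $(f_1,\dots,f_r)$ where each $f_j\in\hat\Z^s$ corresponds to a homomorphism $\Q/\Z\to(\Q/\Z)^s$, i.e. to the image of the $j$-th standard generator. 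The kernel condition says that if $x=(x_1,\dots,x_r)\in(\Q/\Z)^r$ satisfies $\sum_j f_j x_j = 0$ for all $f\in V$, then $nx=0$.

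The key reduction is to pass to the $j$-th coordinate. Since $V$ is a left $R$-module and $R=\Mat_{s\times s}(\hat\Z)$ acts on each column separately, I would show it suffices to prove the claim for each of the $r$ column-projections $V_j\subseteq\hat\Z^s$: namely that each $V_j$, as a left $R$-module, contains $n\hat\Z^s$. For this, fix $j$ and set $x_i=0$ for $i\neq j$; the hypothesis then gives that $\bigcap_{f\in V}\ker f_j\subseteq(\Q/\Z)[n]$, where now $f_j$ ranges over a set of homomorphisms $\Q/\Z\to(\Q/\Z)^s$ that contains $V_j$ and is closed under left $R$-multiplication (indeed $V_j$ itself is such a set). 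So the whole problem reduces to the case $r=1$: an $R$-submodule $W\subseteq\hat\Z^s=\Mat_{s\times1}(\hat\Z)$ with $\bigcap_{f\in W}\ker f\subseteq(\Q/\Z)[n]$ satisfies $W\supseteq n\hat\Z^s$.

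For the $r=1$ case I would use that $R=\Mat_{s\times s}(\hat\Z)$ acting on $\hat\Z^s$ by left multiplication has the property that the $R$-submodule generated by a single vector $w=(w_1,\dots,w_s)^T\in\hat\Z^s$ is $\mathfrak{a}\cdot\hat\Z^s$, where $\mathfrak{a}=(w_1,\dots,w_s)\hat\Z$ is the ideal of $\hat\Z$ generated by the entries of $w$ — because left-multiplying by the elementary matrix with a single entry in position $(i,k)$ moves $w_k$ into slot $i$. Hence any $R$-submodule $W$ of $\hat\Z^s$ is of the form $\mathfrak{a}\hat\Z^s$ for an ideal $\mathfrak{a}=m\hat\Z$ (with $m\mid n^\infty$, or $m=0$); and $\bigcap_{f\in W}\ker f$, viewing $f\in\mathfrak{a}\hat\Z^s$ as the map $x\mapsto fx$ from $\Q/\Z$ to $(\Q/\Z)^s$, is exactly $\{x\in\Q/\Z: mx=0\}=(\Q/\Z)[m]$. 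The hypothesis $(\Q/\Z)[m]\subseteq(\Q/\Z)[n]$ forces $m\mid n$, hence $W=m\hat\Z^s\supseteq n\hat\Z^s$, as desired. (One must also handle $s=0$, where $M$ is trivial and the statement is vacuous, and the degenerate case where some primes do not appear in $n$, which is absorbed by allowing $m\mid n$.)

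The main obstacle I anticipate is not any single step but getting the module-theoretic bookkeeping clean: precisely, justifying that an arbitrary (not necessarily finitely generated, not necessarily closed) left $R$-submodule of $\hat\Z^s$ is generated by its one-element submodules in the appropriate sense, and that the kernel intersection commutes with this decomposition. Since $\hat\Z=\prod_\ell\Z_\ell$ splits as a product over primes and everything in sight decomposes compatibly, the cleanest route may be to reduce first to a single prime $\ell$ — replacing $\hat\Z$ by $\Z_\ell$ and $n$ by $\ell^{v_\ell(n)}$ — where $\Z_\ell$ is a PID (in fact a DVR) and the ideal structure is transparent, and then reassemble. I would make the prime-by-prime reduction the first move, then do the $r=1$, single-prime case by the elementary-matrix argument above, then undo both reductions.
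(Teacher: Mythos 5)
There is a genuine gap at the central reduction. You project $V$ onto its $r$ columns, prove that each projection $V_j\subseteq\hat\Z^s$ contains $n\hat\Z^s$, and conclude $V\supseteq nM$. That last implication is false: a \emph{left} $R$-submodule of $M=\Mat_{s\times r}(\hat\Z)$ is in general strictly smaller than the product of its column projections, because projecting onto the $j$-th column is \emph{right} multiplication by an idempotent of $\Mat_{r\times r}(\hat\Z)$, and $V$ is only assumed stable under the left action. Concretely, take $s=1$, $r=2$, so $R=\hat\Z$ and $M=\hat\Z^2$: the diagonal $V=\set{(a,a):a\in\hat\Z}$ is a closed $R$-submodule whose two column projections both equal $\hat\Z\supseteq n\hat\Z$, yet $V\not\supseteq nM$ for any $n\geq 1$. (This $V$ does not satisfy \eqref{eqn:KernelCondition} for any $n$ --- its joint kernel is the antidiagonal --- but that is exactly the point: the kernel condition on $V$ is strictly stronger than the kernel conditions on the $V_j$, and your argument discards the difference before the final step, after which no further input is available.) The correct reduction is the transpose of yours: a left $\Mat_{s\times s}(\hat\Z)$-submodule of $\Mat_{s\times r}(\hat\Z)$ \emph{is} determined by the $\hat\Z$-submodule of $\hat\Z^r$ generated by the rows of its elements (Morita equivalence, $M\cong N\otimes_{\hat\Z}\hat\Z^r$ with $N$ the column module), so one reduces to $s=1$, i.e.\ to a $\hat\Z$-submodule of $\hat\Z^r$, where the claim is plain Pontryagin duality. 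This is what the paper does by tensoring with the row module $L=\hat\Z^s$ over $R$.

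Your treatment of the $r=1$ case and the prime-by-prime splitting are essentially fine (the central idempotents of $\hat\Z$ do preserve $V$, unlike the column idempotents above). One smaller caveat: not every ideal of $\hat\Z$ is of the form $m\hat\Z$ (consider $\bigoplus_\ell\Z_\ell$), so your classification of $R$-submodules of $\hat\Z^s$ requires $V$ to be closed; the paper's proof also implicitly uses closedness when it invokes Pontryagin duality for $V_L$, so this is a shared and comparatively minor issue next to the column reduction.
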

\begin{proof}
Let $L$ denote the right $R$-module $\hat\Z^s$ of row vectors and let $N$ denote the left $R$-module $\hat\Z^s$ of column vectors. Notice that there is a natural $R$-module isomorphism
\begin{align*}
\begin{array}{cccc}
& N\otimes_{\hat\Z}L\otimes_{R} M&\to &M\\
&x\otimes y\otimes m&\mapsto& x\cdot y\cdot m
\end{array}
\end{align*}
whose inverse is
\begin{align*}
\begin{array}{cccc}
\psi: & M & \to & N\otimes_{\hat\Z}L\otimes_{R} M\\
& m& \mapsto & \sum_{i=1}^s e_i\otimes f_i\otimes m
\end{array}
\end{align*}
where $\{e_i\}$ and $\{f_i\}$ are the canonical bases for $N$ and $L$ respectively.

Consider now the abelian group $M_L:=L\otimes_{R}M$, which is isomorphic to $\hat\Z^r$ via
\begin{align*}
\begin{array}{cccc}
& L\otimes_R M & \to & \hat\Z^r\\
& y\otimes v & \mapsto & y\cdot v
\end{array}
\end{align*}
 and its subgroup
 \[V_L=\langle y\otimes v\mid y\in L,\,v\in V\rangle.\]
 Condition \eqref{eqn:KernelCondition} implies that, seeing the elements of $V_L$ as maps $(\Q/\Z)^r\to \Q/\Z$, we have $\bigcap_{f\in V_L}\ker f\subseteq(\Q/\Z)^r[n]$. Then by Pontryagin duality (Theorem \ref{thm:Pontryagin}) we have $V_L\supseteq nM_L$.

The image of $V$ in $N\otimes_{\hat\Z}L\otimes_RM$ under the isomorphism $\psi$ is
\begin{align*}
\psi(V)=\langle x\otimes y\otimes v\mid x\in N,\,y\in L,\,v\in V\rangle =
\langle x\otimes v_L\mid x\in N,\,v_L\in V_L\rangle
\end{align*}
and since
\begin{align*}
n(N\otimes_{\hat\Z}L\otimes_RM)&=\langle n(x\otimes y\otimes v)\mid x\in N,\,y\in L,\,v\in M\rangle=\\
&= \langle x\otimes n( y\otimes v)\mid x\in N,\,y\in L,\,v\in M\rangle=\\
&= \langle x\otimes w \mid x\in N,\, w\in nM_L\rangle
\end{align*}
we have
\begin{align*}
\psi(V)\supseteq n(N\otimes_{\hat\Z}L\otimes_RM)
\end{align*}
which is equivalent to $V\supseteq nM$.
\end{proof}

\begin{lem}
\label{lem:topmodules}
Let $R$ be a compact topological ring and let $M$ be a compact topological
$R$-module. Let $T\subseteq R$ be a subring of $R$ and let $S$ denote the
smallest closed subring of $R$ containing $T$. If $V\subseteq M$ is a closed
$T$-submodule, then $V$ is also an $S$-module.
\end{lem}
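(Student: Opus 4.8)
I want to show that a closed $T$-submodule $V \subseteq M$ is automatically closed under multiplication by every element of $S$, the smallest closed subring of $R$ containing $T$. The natural strategy is to show that the stabiliser of $V$, namely the set $S' := \{ r \in R \mid rV \subseteq V \}$, is a closed subring of $R$ containing $T$; by minimality of $S$ this will force $S \subseteq S'$, which is exactly the claim.

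First I would check that $S'$ is a subring of $R$. It clearly contains $1$ (since $1 \cdot V = V \subseteq V$) and $0$; it is closed under addition because $(r_1 + r_2)v = r_1 v + r_2 v \in V$ when $r_1, r_2 \in S'$ and $V$ is a subgroup; it is closed under negation since $V$ is a subgroup; and it is closed under multiplication because $(r_1 r_2) V = r_1 (r_2 V) \subseteq r_1 V \subseteq V$. Note $T \subseteq S'$ by hypothesis that $V$ is a $T$-submodule. This part is entirely formal and presents no difficulty.

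The only real step is to show that $S'$ is \emph{closed} in $R$. Here I would use compactness. For a fixed element $v \in V$, consider the multiplication map $\mu_v \colon R \to M$, $r \mapsto rv$, which is continuous since $M$ is a topological $R$-module. Then $\mu_v^{-1}(V)$ is closed in $R$ because $V$ is closed in $M$. Since $S' = \bigcap_{v \in V} \mu_v^{-1}(V)$ — indeed $r \in S'$ iff $rv \in V$ for all $v \in V$ — we see $S'$ is an intersection of closed sets, hence closed. Compactness of $R$ is not even strictly needed for this argument, though it is available; the essential inputs are continuity of the module action and closedness of $V$. (One should double-check that the hypotheses guarantee $rv$ makes sense and that $\mu_v$ is continuous for each fixed $v$; this is precisely what ``compact topological $R$-module'' provides, via continuity of the action $R \times M \to M$ restricted to $\{v\} \times R$... more precisely $R \times \{v\}$.)

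The main obstacle, if any, is purely a matter of being careful about which direction of continuity is being used and making sure no completeness or Hausdorff hypothesis is secretly required; but since $V$ closed and the action continuous suffice, the proof is short. Concretely: let $S' = \{r \in R : rV \subseteq V\}$. It is a subring of $R$ containing $T$ by the formal checks above. For each $v \in V$ the map $r \mapsto rv$ is continuous $R \to M$, so its preimage of the closed set $V$ is closed, and $S' = \bigcap_{v \in V}\{r : rv \in V\}$ is closed. By definition $S$ is the smallest closed subring of $R$ containing $T$, so $S \subseteq S'$, i.e. $SV \subseteq V$, which says $V$ is an $S$-module.
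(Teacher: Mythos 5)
Your proof is correct and rests on the same essential mechanism as the paper's: for each fixed $v\in V$ the map $r\mapsto rv$ is continuous, so closedness of $V$ does all the work (and, as you note, compactness is not actually needed). The only difference is packaging — you pull back, showing the stabiliser $\{r\in R: rV\subseteq V\}$ is a closed subring containing $T$ and invoking minimality of $S$, whereas the paper pushes forward, showing $f_v(S)=f_v(\overline{T})\subseteq\overline{f_v(T)}\subseteq V$; these are two formulations of the same argument.
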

\begin{proof}
Let $v\in V$ and consider the continuous map
\begin{align*}
f_v:R&\to M\\
x&\mapsto xv
\end{align*}
Since $S$ is the closure of $T$ in $R$, we have
\begin{align*}
f_v(S) = f_v\left(\bigcap\set{C \mid C\text{ closed, } T\subseteq C\subseteq R}\right)\subseteq \bigcap\set{f_v(C)\mid C\text{ closed, } T\subseteq C\subseteq R}.
\end{align*}
For any closed subset $D$ of $M$ containing $f(T)$ we have
that $f^{-1}(D)$ is closed and contains $T$
and
$f(f^{-1}(D))\subseteq D$,
so $f_v(S)$ is contained in the closure of $f(T)$.

Since $V$ is a $T$-module, we have $f_v(T)\subseteq V$, and since $V$ is closed we have $f_v(S)\subseteq V$ by what we have just said. Since this holds for any $v\in V$, we conclude that $V$ is an $S$-module.
\end{proof}

The following Proposition is essentially a generalization of \cite[Proposition 4.12(1)]{lt}. 
\begin{prop}
\label{prop:MoveVectorsAround}
Let $R:=\Mat_{s\times s}(\Z_\ell)$ and view $M:=\Mat_{s\times r}(\Z_\ell)$ as a left $R$-module. Let $H$ be a closed subgroup of $\GL_s(\Z_\ell)$ and $V\subseteq M$ a closed left $H$-submodule. Let $W=R\cdot V$ and let $S$ denote the closed $\Z_\ell$-subalgebra of $R$ generated by $H$. Suppose that there are non-negative integers $n$ and $m$ such that
\begin{enumerate}[(1)]
\item $W\supseteq \ell^n M$ and
\item $S\supseteq\ell^mR$.
\end{enumerate}
Then we have $V\supseteq \ell^{n+m}M$.
\end{prop}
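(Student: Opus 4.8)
The plan is to combine the two hypotheses by first using $S \supseteq \ell^m R$ to enlarge $V$ from an $H$-submodule to an $S$-submodule, and then transporting the inclusion $W \supseteq \ell^n M$ back down along a suitable projection. First I would apply Lemma \ref{lem:topmodules} with the compact ring $R = \Mat_{s\times s}(\Z_\ell)$, the compact $R$-module $M = \Mat_{s\times r}(\Z_\ell)$, and the subring $T = \Z_\ell[H]$ (the—not necessarily closed—$\Z_\ell$-subalgebra generated by $H$): since $V$ is a closed $H$-submodule and hence a closed $T$-submodule, it is automatically an $S$-module, where $S$ is the closed $\Z_\ell$-subalgebra generated by $H$. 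Thus $V$ is stable under multiplication by $S$, and in particular, by hypothesis (2), stable under multiplication by $\ell^m R$.

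Next I would exploit hypothesis (1). Since $W = R \cdot V \supseteq \ell^n M$, every element of $\ell^n M$ can be written as a finite sum $\sum_j r_j v_j$ with $r_j \in R$ and $v_j \in V$. Multiplying such an expression on the left by $\ell^m$ (or, more precisely, observing that $\ell^m M \supseteq \ell^m W \supseteq \ell^{n+m} M$ and that $\ell^m r_j \in \ell^m R$) gives $\ell^{n+m} M \subseteq \ell^m W = \ell^m(R \cdot V) = (\ell^m R)\cdot V \subseteq V$, where the last inclusion is precisely the $S$-module stability of $V$ restricted to the ideal $\ell^m R \subseteq S$. This chain of inclusions yields $V \supseteq \ell^{n+m} M$, as desired.

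The only point requiring care — and the step I expect to be the main obstacle — is the identity $\ell^m(R \cdot V) = (\ell^m R) \cdot V$, together with the fact that the closed $\Z_\ell$-subalgebra $S$ genuinely contains the two-sided ideal $\ell^m R$ (not merely $\ell^m$ times a subalgebra), so that closure of $V$ under $S$-multiplication really does give closure under $\ell^m R$-multiplication. Once one unwinds that $R \cdot V$ is the set of finite sums $\sum r_j v_j$ and that scalar multiplication by $\ell^m$ distributes over such sums as $\sum (\ell^m r_j) v_j$, the containment is immediate; and the containment $\ell^m R \subseteq S$ is literally hypothesis (2). Everything else is the bookkeeping of composing the two inclusions, and no compactness or topology is needed beyond invoking Lemma \ref{lem:topmodules} to pass from $H$-stability to $S$-stability of $V$.
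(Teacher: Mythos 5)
Your proposal is correct and follows essentially the same route as the paper: invoke Lemma \ref{lem:topmodules} to upgrade $V$ from a closed $T$-module (where $T=\Z_\ell[H]$) to an $S$-module, then chain $V\supseteq S\cdot V\supseteq \ell^m R\cdot V=\ell^m W\supseteq \ell^{m+n}M$. No gaps.
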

\begin{proof}
Let $T$ denote the (not necessarily closed) $\Z_\ell$-subalgebra of $R$ generated by $H$, so that $S$ is the closure of $T$. It is clear that $V$, being both a $\Z_\ell$-module and an $H$-module, is a $T$-module. Since it is closed, $V$ is also an $S$-module by Lemma \ref{lem:topmodules} above.

Then we have $V\supseteq S\cdot V\supseteq\ell^mR\cdot V=\ell^m W\supseteq \ell^m\cdot\ell^nM$.
\end{proof}

The following result is an adelic version of Proposition \ref{prop:MoveVectorsAround}.

\begin{prop}
\label{prop:AdelicMoveVectors}
Let $R:=\Mat_{s\times s}(\hat\Z)$ and view $M:=\Mat_{s\times r}(\hat\Z)$ as a left $R$-module. Let $H$ be a closed subgroup of $\GL_s(\hat\Z)$ and let $V\subseteq M$ be a closed left $H$-submodule. Let $W=R\cdot V$ and, for every prime $\ell$, let $H_\ell$ denote the image of $H$ under the projection $\GL_s(\hat\Z)\to\GL_s(\Z_\ell)$ and let $\Z_\ell[H_\ell]$ denote the closed sub-$\Z_\ell$-algebra of $\Mat_{s\times s}(\Z_\ell)$ generated by $H_\ell$. Suppose that there are positive integers $n$ and $m$ such that
\begin{enumerate}[(1)]
\item $W\supseteq n M$;
\item For every prime $\ell$ we have $\Z_\ell[H_\ell]\supseteq m \Mat_{s\times s}(\Z_\ell)$.
\end{enumerate}
Then we have $V\supseteq nmM$.
\end{prop}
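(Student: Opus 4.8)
The plan is to reduce the adelic statement to the $\ell$-local statement of Proposition \ref{prop:MoveVectorsAround} one prime at a time, and then reassemble the local conclusions using the fact that $\hat\Z = \prod_\ell \Z_\ell$ and that all the modules in sight are products of their $\ell$-components. Since $R = \Mat_{s\times s}(\hat\Z) = \prod_\ell \Mat_{s\times s}(\Z_\ell)$ and $M = \Mat_{s\times r}(\hat\Z) = \prod_\ell \Mat_{s\times r}(\Z_\ell)$, a closed $\hat\Z$-submodule $V \subseteq M$ decomposes as $V = \prod_\ell V_\ell$, where $V_\ell$ is the image of $V$ under the projection to the $\ell$-component (here one uses that $V$ is closed, hence compact, so its projections are closed and $V$ equals the product of them as $\hat\Z$-modules). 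The same applies to $W = R\cdot V$, and one checks $W_\ell = \Mat_{s\times s}(\Z_\ell)\cdot V_\ell$: the inclusion $\supseteq$ is clear, and $\subseteq$ holds because any element of $W$ is a finite sum $\sum r^{(k)} v^{(k)}$ with $r^{(k)}\in R$, $v^{(k)}\in V$, whose $\ell$-component is $\sum r^{(k)}_\ell v^{(k)}_\ell \in \Mat_{s\times s}(\Z_\ell)\cdot V_\ell$.

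Next I would translate the two hypotheses into their local forms. Hypothesis (1), $W \supseteq nM$, projects to $W_\ell \supseteq n\Mat_{s\times r}(\Z_\ell) = \ell^{v_\ell(n)}\Mat_{s\times r}(\Z_\ell)$ for every prime $\ell$ (and the condition is vacuous for $\ell \nmid n$). Hypothesis (2) is already stated locally: $\Z_\ell[H_\ell] \supseteq \ell^{v_\ell(m)}\Mat_{s\times s}(\Z_\ell)$. Moreover $V_\ell$ is a closed $H_\ell$-submodule of $\Mat_{s\times r}(\Z_\ell)$, since $H_\ell$ is the (closed, as a continuous image of a compact group) image of $H$ and $V$ is $H$-stable. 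Thus for each prime $\ell$ the triple $(V_\ell, W_\ell, \Z_\ell[H_\ell])$ satisfies exactly the hypotheses of Proposition \ref{prop:MoveVectorsAround} with the integers $v_\ell(n)$ and $v_\ell(m)$ in the roles of $n$ and $m$ there, so we conclude $V_\ell \supseteq \ell^{v_\ell(n)+v_\ell(m)}\Mat_{s\times r}(\Z_\ell) = \ell^{v_\ell(nm)}\Mat_{s\times r}(\Z_\ell)$.

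Finally I would reassemble: since $V = \prod_\ell V_\ell \supseteq \prod_\ell \ell^{v_\ell(nm)}\Mat_{s\times r}(\Z_\ell) = nm\,M$, which is the desired conclusion. The only genuinely non-formal points are the bookkeeping lemmas about products — that a closed $\hat\Z$-submodule of $\prod_\ell \Mat_{s\times r}(\Z_\ell)$ is the product of its projections, and that $R\cdot V$ and the $H$-action behave componentwise — but these are standard consequences of compactness together with the product decomposition $\hat\Z = \prod_\ell \Z_\ell$, so I do not anticipate a real obstacle; the main content has already been isolated in Proposition \ref{prop:MoveVectorsAround}. One subtlety worth a sentence in the write-up: for $\ell \nmid nm$ the local statement gives $V_\ell \supseteq \Mat_{s\times r}(\Z_\ell)$, i.e. $V_\ell$ is everything, which is consistent and needs hypothesis (1) to supply $W_\ell = \Mat_{s\times r}(\Z_\ell)$ there.
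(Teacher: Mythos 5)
Your proposal is correct and follows essentially the same route as the paper: decompose $R$, $M$, $V$, $W$ into their $\ell$-components, apply Proposition \ref{prop:MoveVectorsAround} prime by prime with the valuations $v_\ell(n)$, $v_\ell(m)$, and recover $V=\prod_\ell V_\ell$ from closedness (the paper does this by writing any element of $\prod_\ell V_\ell$ as a convergent sum $\sum_\ell e_\ell\tilde w_\ell$ of elements of $V$). The bookkeeping points you flag — $W_\ell=R_\ell\cdot V_\ell$, closedness of $V_\ell$ and $H_\ell$ as continuous images of compacta — are exactly the ones the paper handles, so there is nothing to change.
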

\begin{proof}
Let $R_\ell:=\Mat_{s\times s}(\Z_\ell)$ and $M_\ell:=\Mat_{s\times r}(\Z_\ell)$, so that
\begin{align*}
R=\prod_\ell R_\ell \qquad \text{and} \qquad M=\prod_\ell M_\ell.
\end{align*}
Let moreover $V_\ell$ and $W_\ell$ denote the images of $V$ and $W$ in $M_\ell$, respectively. Notice that $V_\ell$ is an $H_\ell$-submodule of $M_\ell$ and that $W_\ell$ is the $R_\ell$-submodule of $M_\ell$ generated by $V_\ell$.

By (1) we have that $W_\ell$ contains the image of $nM$ in $M_\ell$, which is $nM_\ell$. By (2) we have $\Z_\ell[H_\ell]\supseteq m\Mat_{s\times s}(\Z_\ell)$, so we can apply Proposition \ref{prop:MoveVectorsAround} and deduce that $V_\ell\supseteq nmM_\ell$.

We claim that $V=\prod_\ell V_\ell$, seen as a subgroup of $\prod_\ell M_\ell$. Clearly $V\subseteq \prod_\ell V_\ell$, since every $v\in V$ is equal to the tuple $(e_\ell v)_\ell$, where $e_\ell\in \hat\Z=\prod \Z_p$ is the element whose $\ell$-component is $1$ and whose $p$-component is $0$ for all $p\neq \ell$. For the other inclusion, let $(w_\ell)_\ell\in \prod_\ell V_\ell$. Since $V_\ell$ is the image of $V$ under the natural projection, for every $\ell$ there must be $\tilde w_\ell\in V$ whose $\ell$-component is $w_\ell$. Then the infinite sum
\begin{align*}
\sum_\ell e_\ell\tilde w_\ell
\end{align*}
converges to $(w_\ell)_\ell$ in $M$: consider the sequence of partial sums
\begin{align*}
\{x_k\}_{k\in \NN}=\left\{ \sum_{\ell\leq k}e_\ell\tilde w_\ell \right\}_{k\in \NN}
\end{align*}
and let $U\subseteq M$ be an open neighbourhood of $(w_\ell)_\ell$, which must be of the form
\begin{align*}
\prod_{\ell\leq N}U_\ell\times \prod_{\ell>N}M_\ell
\end{align*}
for some integer $N$ and some open neighbourhoods $U_\ell$ of $w_\ell$ in $M_\ell$; then clearly $x_k\in U$ for all $k\geq N$.

Since $V$ is closed in $M$, we must then have $(w_\ell)_\ell\in V$, which shows that $V=\prod_\ell V_\ell$.

Since for every prime $\ell$ the multiplication-by-$\ell$ endomorphism on a $\hat\Z$-module is invertible on all prime-to-$\ell$ components, we have $\prod_\ell nmM_\ell=\prod_\ell\ell^{v_\ell(nm)}M_\ell=nm M$, so
\begin{align*}
V=\prod_\ell V_\ell \supseteq \prod_{\ell}nmM_\ell=nmM
\end{align*}
and we conclude.
\end{proof}
\section{General entanglement theory}
\label{sec:GeneralTheory}

\subsection{Initial remarks and definitions}

Fix a number field $K$ and an algebraic closure $\Kbar$ of $K$. Let $G$ be a commutative connected algebraic group over $K$. It is well-known that there is a non-negative integer $s$, depending only on $G$, such that $G(\Kbar)[n]\cong (\Z/n\Z)^s$ for all integers $n> 1$. For example, if $G$ is an abelian variety of dimension $g$, we have $s=2g$.

Let $A\subseteq G(K)$ be a finitely generated and torsion-free subgroup of rank $r>0$ and consider the \emph{divisible hull} of $A$ in $G(\Kbar)$
\begin{align}
\label{eq:radicalGroup}
\Gamma:=\set{P\in G(\Kbar)\mid \exists\,n\in\NN_{\geq 1}:\,nP\in A}
\end{align}

which is a subgroup of $G(\Kbar)$ and a full $s$-extension of $A$.

We have $\Gamma_{\tors}=G(\Kbar)_{\tors}$, which we will also denote by $G_{\tors}$.
We also have
\begin{align*}
A+G(K)_{\tors}\subseteq\Gamma\cap G(K).
\end{align*}
The quotient group $(\Gamma\cap G(K))/ (A+G(K)_{\tors})$, being a quotient of a subgroup of $\Gamma/A$, is always a torsion group.

\begin{defi}
\label{def:divisibility}
We call any integer $d_A>1$ such that $d_A(\Gamma\cap G(K))\subseteq A+G(K)_{\tors}$ a \emph{divisibility parameter} for $A$ in $G(K)$. If such an integer exists, we say that \emph{$A$ has finite divisibility} in $G(K)$.
\end{defi}

\begin{exa}
\label{exa:divisibilityParam}
\begin{enumerate}[(1)]
\item If $G(K)$ is finitely generated, every torsion-free subgroup $A\subseteq G(K)$ has finite divisibility in $G(K)$: in fact, the abelian group $(\Gamma\cap G(K))/ (A+G(K)_{\tors})$ is torsion and finitely generated, so it is finite.
\item Let $G=\mathbb{G}_m$ be the multiplicative group, so that $s=1$. In this case $G(K)=K^\times$ is not finitely generated, but it still holds that every finitely generated $A\subseteq G(K)$ has finite divisibility. In order to prove this it is enough to show that for every prime number $\ell$ there is a non-negative integer $m_\ell$ such that the $\ell$-power torsion of $(\Gamma\cap G(K))/ (A+G(K)_{\tors})$ is contained in
\begin{align*}
\frac{\Gamma\cap G(K)}{A+G(K)_{\tors}}[\ell^{m_\ell}]
\end{align*}
and that we can take $m_\ell=0$ for all but finitely many primes $\ell$. The first part is just \cite[Lemma 12]{debryperucca}. As for the second part, assume that $A$ admits a strongly $\ell$-independent basis $a_1,\dots, a_r$ as in \cite[Definition 2.1]{peruccasgobba}, which is true for all but finitely many $\ell$ by \cite[Theorem 2.7]{peruccasgobba}. Let $b\in \Gamma\cap K^{\times}$ be such that $b^{\ell^m}\in A\cdot \mu(K)$ for some $m\geq 1$. Then
\begin{align*}
b^{\ell^m}=\zeta\cdot \prod_{i=1}^ra_i^{x_i}
\end{align*}
for some $x_1,\dots,x_r\in \Z$ and some root of unity $\zeta\in K$ of order a power of $\ell$. Since the $a_i$ are strongly $\ell$-independent, every $x_i$ is divisible by $\ell^m$. This means that $b\in A\cdot \mu(K)=A+G(K)_{\tors}$, so we can take $m_\ell=0$.

Notice that the cited results are fully explicit, so a divisibility parameter for $A$ is effectively computable.
\item Let $G=\G_a$ be the additive group, so that $s=0$. In this case no subgroup $A\subseteq G(K)$ has finite divisibility. In fact we have
\begin{align*}
\Gamma=\set{b\in \overline K\mid \exists\, n\in\NN_{\geq 1}\text{ such that }nb\in A}\subseteq K.
\end{align*}
Then $(\Gamma\cap G(K))/A=\Gamma/A$ contains elements of unbounded order. Since $\Gamma\subseteq G(K)$, \emph{Kummer theory for the additive group is trivial}.
\end{enumerate}
\end{exa}

\subsection{Torsion and Kummer representations and the entanglement group}
Fix for the rest of the section a finitely generated subgroup $A\subseteq G(K)$.
For simplicity, we will denote $K(G_{\tors})$ by $K_\infty$. We are interested in studying the tower of extensions $K(\Gamma)\mid K_\infty\mid K$. Notice that $K(\Gamma)$ is a Galois extension of $K$: in fact it is the union of its finite subextensions of the form $K(\Gamma_n)$, where $\Gamma_n=\{P\in G(\Kbar)\mid nP\in A\}$, which are Galois. Similarly, $K_\infty\mid K$ is Galois, since it is the union of the finite Galois extensions $K_n:=K(G[n])$ of $K$.

The action of $\Gal(\Kbar\mid K)$ on $G(\Kbar)$ gives rise, for every $n\geq1$, to injective homomorphisms
\begin{align*}
  \Gal(K(\Gamma_n)\mid K_n) &\hookrightarrow \Aut_{A+G[n]}(\Gamma_n)\cong\Hom\left(\frac{\Gamma_n}{A+G[n]},G[n]\right),\\[0.6em]
  \Gal(K(\Gamma_n)\mid K) &\hookrightarrow \Aut_A(\Gamma_n),\\[1em]
  \Gal(K_n\mid K) &\hookrightarrow \Aut(G[n])
\end{align*}
which by Proposition \ref{prop:SequenceAutContinuous} fit into the commutative diagram with exact rows
\begin{equation*}
\begin{tikzcd}[column sep=1.9em] 
1\arrow[r]&\Gal(K(\Gamma_n)\mid K_n) \arrow[r] \arrow[hook,d]&\Gal(K(\Gamma_n)\mid K)\arrow[r] \arrow[hook,d]& \Gal(K_n\mid K)\arrow[hook,d]\arrow[r]&1\\
0\arrow[r]&\Hom\left(\dfrac{\Gamma_n}{A+G[n]},G[n]\right) \arrow[r] &\Aut_A(\Gamma_n)\arrow[r] & \Aut (G[n])\arrow[r]&1
\end{tikzcd}
\end{equation*}

Taking the projective limit we obtain the following commutative diagram
of topological groups with exact rows:

\begin{equation*}
\begin{tikzcd}[column sep=1.9em] 
1\arrow[r]&\Gal(K(\Gamma)\mid K_{\infty}) \arrow[r] \arrow[hook,d]&\Gal(K(\Gamma)\mid K)\arrow[r] \arrow[hook,d]& \Gal(K_\infty\mid K)\arrow[hook,d]\arrow[r]&1\\
0\arrow[r]&\Hom\left(\dfrac{\Gamma}{A+G_{\tors}},G_{\tors}\right) \arrow[r] &\Aut_A(\Gamma)\arrow[r] & \Aut (G_{\tors})\arrow[r]&1
\end{tikzcd}
\end{equation*}

and the Krull topology on the Galois groups coincides with the subspace topology with respect to the automorphism groups.

\begin{defi}
We call the cokernel of the above defined map
\begin{align*}
\Gal(K(\Gamma)\mid K_\infty) \hookrightarrow \Hom\left(\dfrac{\Gamma}{A+G_{\tors}},G_{\tors}\right)
\end{align*}
the \textbf{entanglement group} of $A$, and we denote it by $\Ent(A)$.
\end{defi}

Fixing an isomorphism as in Proposition \ref{prop:StructureOfB}
\begin{align*}
\Phi:\Gamma\overset\sim\to \Q^r\oplus (\Q/\Z)^s
\end{align*}
that maps $A$ to $\Z^r\subseteq \Q^r$, we get by Proposition \ref{prop:AutFullStruct} isomorphisms of topological groups
\begin{align*}
\Phi^*_{\kumm}:\Hom\left(\dfrac{\Gamma}{A+\Gamma_{\tors}},\Gamma_{\tors}\right)\xrightarrow{\sim} \Mat_{s\times r}(\hat\Z),
\qquad
\Phi_{\tors}^*:\Aut(\Gamma_{\tors})\xrightarrow{\sim} \GL_s(\hat\Z).
\end{align*}
Then we get a diagram with exact rows
\begin{equation*}
\begin{tikzcd}[column sep=1.9em] 
1\arrow[r]&\Gal(K(\Gamma)\mid K_{\infty}) \arrow[r] \arrow[hook,d]&\Gal(K(\Gamma)\mid K)\arrow[r] \arrow[hook,d]& \Gal(K_\infty\mid K)\arrow[hook,d]\arrow[r]&1\\
0\arrow[r]&\Mat_{s\times r}(\hat\Z) \arrow[r] &\Aut_{\Z^r}\left(\Q^r\oplus (\Q/\Z)^s\right)\arrow[r] & \GL_s(\hat\Z)\arrow[r]&1
\end{tikzcd}
\end{equation*}
which we will refer to as the \textbf{torsion-Kummer representation} related to $A$. We will also call the map
\begin{align*}
\Gal(K(\Gamma)\mid K_\infty)\hookrightarrow \Mat_{s\times r}(\hat\Z)
\end{align*}
the \textbf{Kummer representation}, and the map
\begin{align*}
\Gal(K_\infty\mid K)\hookrightarrow \GL_s(\hat\Z)
\end{align*}
the \textbf{torsion representation}.

\begin{defi}
We will denote by $H(G)$ the image of $\Gal(K_\infty\mid K)$ in $\GL_s(\hat\Z)$ and by $V(A)$ the image of $\Gal(K(\Gamma)\mid K_\infty)$ in $\Mat_{s\times r}(\hat\Z)$. 
\end{defi}

Since all groups appearing in the diagram above are profinite and all the maps are continuous, it follows that $V(A)$ and $H(G)$ are closed subgroups of $\Mat_{s\times r}(\hat\Z)$ and $\GL_s(\hat\Z)$, respectively. One of our goals is proving that, under certain conditions, $V(A)$ is also open. More precisely, we want to bound the order of $\Ent(A)\cong\Mat_{s\times r}(\hat\Z)/V(A)$.

\begin{rem}
  \label{rem:kummerDegDividesN}
  It follows from the existence of the Kummer representation that for any
  $n\geq 1$ the degree $[K(n^{-1}A):K(G[n])]$ divides $n^{rs}$.
\end{rem}

\begin{rem}
The definition of entanglement group given here is different from that of \cite{palenstijn}, where the entanglement group for $G=\mathbb{G}_m$ is defined as the quotient of $\Aut_A(\Gamma)$ by the image of $\Gal(K(\Gamma)\mid K)$, which in the cases considered there is a normal subgroup (see \cite[Theorem 1.6]{palenstijn}). 
In fact, the entanglement group defined here is a subgroup of that of \cite{palenstijn}.
\end{rem}

We conclude this section by remarking the following fact.

\begin{lem}
\label{lemma:entBoundsDegrees}
Let $G$ be a commutative connected algebraic group over a number field $K$ and
let $A\subseteq G(K)$ be a finitely generated, torsion-free subgroup of $G(K)$
of rank $r>0$.
If $\Ent(A)$ is finite, for every $n\geq 1$
\begin{align*}
  \frac{n^{rs}}{\left[K\left(n^{-1}A\right):K\left(G[n]\right)\right]}
  \quad \text{divides} \quad \#\Ent(A)\,.
\end{align*}
\end{lem}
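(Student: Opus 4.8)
The statement relates the "defect" $n^{rs}/[K(n^{-1}A):K(G[n])]$ to the order of the entanglement group $\Ent(A) \cong \Mat_{s\times r}(\hat\Z)/V(A)$. The natural approach is to pass to the finite level $n$ and compare $[K(n^{-1}A):K(G[n])]$ with $n^{rs} = \#\Mat_{s\times r}(\Z/n\Z)$ via the Kummer representation modulo $n$. Write $V_n$ for the image of $\Gal(K(\Gamma_n)\mid K_n)$ in $\Mat_{s\times r}(\Z/n\Z)$ under the mod-$n$ Kummer representation; since this map is injective (as recorded in the torsion-Kummer diagram), we have $[K(n^{-1}A):K(G[n])] = \#V_n$. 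Hence the quantity we must control is $\#\Mat_{s\times r}(\Z/n\Z)/\#V_n = [\Mat_{s\times r}(\Z/n\Z):V_n]$, and the claim becomes: this index divides $\#\Ent(A)$ for every $n$.

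First I would identify $V_n$ with the image of $V(A)$ under the reduction map $\Mat_{s\times r}(\hat\Z)\twoheadrightarrow \Mat_{s\times r}(\Z/n\Z)$; this is exactly the compatibility of the Kummer representations at different levels, which comes from the projective-limit construction in Section~\ref{sec:GeneralTheory} (the diagrams there commute with the mod-$n$ and mod-$m$ reductions). So if $\pi_n$ denotes reduction mod $n$, then $V_n = \pi_n(V(A))$. Since $\Mat_{s\times r}(\hat\Z) = \varprojlim \Mat_{s\times r}(\Z/n\Z)$ and $V(A)$ is closed, $V(A) = \varprojlim V_n$ as well.

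Then the key observation is purely group-theoretic: if $Q$ is a profinite abelian group, $V\subseteq Q$ a closed subgroup with $Q/V$ finite, and $Q_n$, $V_n$ are the images in a compatible system of finite quotients $Q = \varprojlim Q_n$, then $[Q_n:V_n]$ divides $[Q:V]$ for every $n$. Indeed, $Q_n/V_n$ is a quotient of $Q/V$: the surjection $Q\twoheadrightarrow Q_n$ carries $V$ onto $V_n$, so it descends to a surjection $Q/V\twoheadrightarrow Q_n/V_n$, whence $\#(Q_n/V_n)$ divides $\#(Q/V)$. Applying this with $Q = \Mat_{s\times r}(\hat\Z)$, $V = V(A)$, $Q_n = \Mat_{s\times r}(\Z/n\Z)$, $V_n = \pi_n(V(A))$, and using $\#(Q/V) = \#\Ent(A)$, we get exactly that $n^{rs}/[K(n^{-1}A):K(G[n])] = [\Mat_{s\times r}(\Z/n\Z):V_n]$ divides $\#\Ent(A)$.

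**Main obstacle.** There is no serious obstacle; the content is entirely in correctly bookkeeping the identifications. The one point that needs a line of care is the claim $[K(n^{-1}A):K(G[n])] = \#V_n$, i.e. that the mod-$n$ Kummer map $\Gal(K(\Gamma_n)\mid K_n)\hookrightarrow \Mat_{s\times r}(\Z/n\Z)$ is injective with image $V_n$ and that this image is the mod-$n$ reduction of $V(A)$; this follows from the injectivity in the diagram in Section~\ref{sec:GeneralTheory} together with the surjectivity of $\Gal(K(\Gamma)\mid K_\infty)\twoheadrightarrow \Gal(K(\Gamma_n)\mid K_n)$, which in turn is a consequence of Proposition~\ref{prop:ResIsSurj} applied along the projective system. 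Everything else is the elementary divisibility fact about quotients of finite abelian groups.
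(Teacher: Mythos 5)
Your reduction to the finite level and the divisibility fact about quotients are fine, but the identification you flag as ``the one point that needs a line of care'' is exactly where the argument breaks: the restriction map $\Gal(K(\Gamma)\mid K_\infty)\to\Gal(K(\Gamma_n)\mid K_n)$ is \emph{not} surjective in general, so $V_n:=\mathrm{image\ of\ }\Gal(K(\Gamma_n)\mid K_n)$ need not equal $\pi_n(V(A))$. By Galois theory the image of that restriction is $\Gal(K(\Gamma_n)\mid K(\Gamma_n)\cap K_\infty)$, and the field $K(\Gamma_n)\cap K_\infty$ can strictly contain $K_n$; Proposition \ref{prop:ResIsSurj} concerns abstract automorphism groups of $s$-extensions and says nothing about which of those automorphisms are realized by Galois elements. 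A concrete failure: $G=\mathbb{G}_m$, $K=\Q$, $A=\langle 2\rangle$, $n=2$. Then $K(\Gamma_2)=\Q(\sqrt 2)$ and $\Q(\sqrt2)\subseteq\Q(\zeta_8)\subseteq K_\infty$, so $\pi_2(V(A))$ is trivial while $V_2=\Gal(\Q(\sqrt2)\mid\Q)$ has order $2$. What is true is only the inclusion $\pi_n(V(A))\subseteq V_n$, with index $[K(\Gamma_n)\cap K_\infty:K_n]$, so $\#V_n=[K(n^{-1}A):K(G[n])]$ but $\#\pi_n(V(A))=[K_\infty(n^{-1}A):K_\infty]$, and these differ in general.

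The repair is short and brings you to the paper's proof. Your quotient argument correctly shows that $\bigl[\Mat_{s\times r}(\Z/n\Z):\pi_n(V(A))\bigr]=n^{rs}/[K_\infty(n^{-1}A):K_\infty]$ divides $\#\Ent(A)$. Since $\pi_n(V(A))\subseteq V_n$, the index of $V_n$ divides the index of $\pi_n(V(A))$, and hence $n^{rs}/[K(n^{-1}A):K(G[n])]$ divides $\#\Ent(A)$ as claimed. Equivalently (this is how the paper phrases it), one splits the degree along the tower $K(G[n])\subseteq K_\infty\cap K(n^{-1}A)\subseteq K(n^{-1}A)$:
\begin{align*}
[K(n^{-1}A):K(G[n])]=[K_\infty(n^{-1}A):K_\infty]\cdot[K_\infty\cap K(n^{-1}A):K(G[n])],
\end{align*}
and observes that the left-hand side is a multiple of the factor controlled by $V(A)$. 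So the idea is right, but as written the proof asserts a false equality and a false surjectivity; you must work with $K_\infty$ as the base for the Kummer part and account separately for the ``vertical'' contribution $[K_\infty\cap K(n^{-1}A):K(G[n])]$.
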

  \begin{proof}
    The image of $V(A)$ under the natural quotient map
    $\Mat_{s\times r}(\hat\Z)\to \Mat_{s\times r}(\Z/n\Z)$
    is $\Gal(K_\infty(n^{-1}A)\mid K_\infty)$, so the ratio
    \begin{align*}
      \frac{n^{rs}}
        {\left[K_\infty\left(n^{-1}A\right):K_\infty\right]}
    \end{align*}
    divides $\#\Ent(A)$. In order to conclude it suffices to notice that
    \begin{align*}
      [K(n^{-1}A): K(G[n])]&=[K(n^{-1}A):K_\infty\cap K(n^{-1}A)]\cdot
      [K_\infty\cap K(n^{-1}A):K(G[n])]=\\
        &=[K_\infty\left(n^{-1}A\right):K_\infty]\cdot
          [K_\infty\cap K(n^{-1}A):K(G[n])].
    \end{align*}
  \end{proof}

\subsection{Bounding the entanglement group}

We now give some sufficient conditions for the finiteness of the entanglement group $\Ent(A)$. In particular, we want to explicitly bound its cardinality in terms of some known quantities. This will be accomplished by applying the results of Section \ref{sec:LinearAlgebra}.

Assume for the rest of this section that $A$ has finite divisibility and that
$d_A$ is a divisibility parameter for $A$ in $G(K)$. Consider the joint kernel of the elements of $V(A)$, that is
\begin{align*}
  S(A):= \bigcap_{f\in V(A)}\ker f\subseteq (\Q/\Z)^r.
\end{align*}
where we consider elements of $\Mat_{s\times r}(\hat \Z)$ as maps $(\Q/\Z)^r\to(\Q/\Z)^s$.
The image of any $[b]\in \Gamma/(A+G_{\tors})$ in $(\Q/\Z)^r$
is in the kernel of every $f\in V(A)$ if and only if $b$ is fixed by every automorphism $\sigma\in \Gal(K(\Gamma)\mid K_\infty)$, that is if and only if $b\in G(K_\infty)$. So we have
\begin{align*}
  S(A)=\overline\Phi\left(\frac{\Gamma\cap G(K_\infty)}{A+G_{\tors}}\right).
\end{align*}
where we have denoted by $\overline \Phi$ the isomorphism $\Gamma/(A+\Gamma_{\tors})
\overset{\sim}{\to}(\Q/\Z)^r$ induced by $\Phi$.  
Let
\begin{align*}
\varphi:\Gamma\cap G(K_\infty)&\longrightarrow H^1(\Gal(K_\infty\mid K),G_{\tors})
\end{align*}
be the group homomorphism that maps an element $b\in \Gamma\cap G(K_\infty)$ to the class of the cocyle $\varphi_b:\sigma\mapsto \sigma(b)-b$. Notice that $A+G_{\tors}\subseteq \ker\varphi$, because $\Gal(K_\infty \mid K)$ acts trivially on $A$ and $\varphi_t$ is a coboundary for every $t\in G_{\tors}$. So $\varphi$ gives rise to a map
\begin{align*}
S(A)\longrightarrow  H^1(\Gal(K_\infty\mid K),G_{\tors})
\end{align*}
which we also denote by $\varphi$.

\begin{prop}
\label{prop:JointKernelInH1}
The kernel of $\varphi:S(A)\to H^1(\Gal(K_\infty\mid K),G_{\tors})$ is contained in $S(A)[d_A]$. In particular, if $ H^1(\Gal(K_\infty\mid K),G_{\tors})$ has finite exponent $n$, then the exponent of $S(A)$ divides $nd_A$.
\end{prop}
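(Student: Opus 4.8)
The plan is to unwind the definition of $\varphi$ on the kernel and exploit the divisibility parameter $d_A$. Suppose $b \in \Gamma \cap G(K_\infty)$ represents a class $[b] \in S(A)$ with $\varphi([b]) = 0$. This means the cocycle $\sigma \mapsto \sigma(b) - b$ is a coboundary in $Z^1(\Gal(K_\infty \mid K), G_{\tors})$, so there exists $t \in G_{\tors}$ such that $\sigma(b) - b = \sigma(t) - t$ for all $\sigma \in \Gal(K_\infty \mid K)$. Equivalently, $b - t$ is fixed by every element of $\Gal(K_\infty \mid K)$, hence $b - t \in G(K)$. Since $t \in G_{\tors} \subseteq \Gamma$ and $b \in \Gamma$, we get $b - t \in \Gamma \cap G(K)$.

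Now I apply the divisibility parameter: $d_A(\Gamma \cap G(K)) \subseteq A + G(K)_{\tors}$, so $d_A(b - t) \in A + G(K)_{\tors} \subseteq A + G_{\tors}$. Since $d_A t \in G_{\tors} \subseteq A + G_{\tors}$ as well, we conclude $d_A b \in A + G_{\tors}$, i.e. $d_A [b] = 0$ in $\Gamma/(A + G_{\tors})$, and a fortiori in the subgroup $S(A)$. This shows $\ker \varphi \subseteq S(A)[d_A]$, which is the first assertion.

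For the second assertion, suppose $H^1(\Gal(K_\infty \mid K), G_{\tors})$ has finite exponent $n$. Then for any $[b] \in S(A)$ we have $n \cdot \varphi([b]) = \varphi(n[b]) = 0$, so $n[b] \in \ker\varphi \subseteq S(A)[d_A]$, whence $d_A n [b] = 0$. Thus every element of $S(A)$ is killed by $d_A n$, i.e. the exponent of $S(A)$ divides $n d_A$.

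The argument is essentially a diagram chase through the inflation-restriction setup, and there is no real obstacle: the only point requiring a moment of care is the passage from "$\varphi([b]) = 0$" to "there is a genuine element $t \in G_{\tors}$ trivializing the cocycle at the level of $\Gamma$" — one must check that the $t$ produced as a coboundary datum can be taken inside $G_{\tors} = \Gamma_{\tors} \subseteq \Gamma$, which is automatic since $H^1$ is computed with coefficients in $G_{\tors}$ itself. Everything else is bookkeeping with the inclusions $A + G(K)_{\tors} \subseteq \Gamma \cap G(K)$ and the definition of $d_A$.
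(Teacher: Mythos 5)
Your argument is correct and matches the paper's proof essentially step for step: a representative $b$ of a class in $\ker\varphi$ has its cocycle trivialized by some $t\in G_{\tors}$, whence $b-t\in\Gamma\cap G(K)$, and the divisibility parameter gives $d_Ab\in A+G_{\tors}$. The deduction of the exponent bound in the second assertion is the same routine consequence the paper intends.
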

\begin{proof}
Let $b\in \Gamma\cap G(K_\infty)$ and assume that $\varphi_b$ is a coboundary. We want to show that $d_Ab\in A+G_{\tors}$. Since $\varphi_b$ is a coboundary, there is $t_0\in G_{\tors}$ such that for all $\sigma\in \Gal(K_\infty\mid K)$ we have $\sigma(b)-b=\sigma(t_0)-t_0$, hence $\sigma(b-t_0)=b-t_0$. This means that $b-t_0\in \Gamma\cap G(K)$, hence $d_Ab=d_A(b-t_0)+d_At_0$. Since $d_A$ is a divisibility parameter for $A$, we have $d_A(b-t_0)\in A+G(K)_{\tors}$, so $d_Ab\in A+G_{\tors}$. Hence $d_Ab$ is zero in $S(A)$.
\end{proof}

We can finally prove the main theorem of this section. Recall that $s$ is a
non-negative integer such that $G[n]\cong (\Z/n\Z)^s$ for every $n\geq 1$ and
that $H(G)$ denotes the image of $\Gal(K_{\infty}\mid K)$ in $\GL_s(\hat\Z)$.

\begin{thm}
\label{thm:GeneralBoundEntanglement}
Let $G$ be a commutative connected algebraic group over a number field $K$ and let $A\subseteq G(K)$ be a finitely generated and torsion-free subgroup of rank $r>0$. For every prime $\ell$, let $H_\ell(G)$ denote the image of $H(G)$ under the projection $\GL_s(\hat\Z)\to\GL_s(\Z_\ell)$ and denote by $\Z_\ell[H_\ell(G)]$  the closed sub-$\Z_\ell$-algebra of $\Mat_{s\times s}(\Z_\ell)$ generated by $H_\ell(G)$. Assume that
\begin{enumerate}[(1)]
\item The group $A$ admits a divisibility parameter $d_A$ in $G(K)$.
\item There is an integer $n\geq 1$ such that $\Z_\ell[H_\ell(G)]\supseteq n\Mat_{s\times s}(\Z_\ell)$ for every prime $\ell$.
\item There is an integer $m\geq 1$ such that the exponent of $H^1(\Gal(K_\infty\mid K),G_{\tors})$ divides $m$.
\end{enumerate}
Then $V(A)$ is open in $\Mat_{r\times s}(\hat\Z)$. More precisely, the order of $\Ent(A)$ divides $(d_Anm)^{rs}$.
\end{thm}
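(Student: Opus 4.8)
The plan is to feed the Kummer representation into the $\hat\Z$-linear algebra results of Section~\ref{sec:LinearAlgebra}. First I would fix an isomorphism $\Phi$ as in Proposition~\ref{prop:StructureOfB} sending $A$ onto $\Z^r\subseteq\Q^r$; by Proposition~\ref{prop:AutFullStruct} this identifies $\Hom(\Gamma/(A+G_{\tors}),G_{\tors})$ with $M:=\Mat_{s\times r}(\hat\Z)$, identifies $\Aut(G_{\tors})$ with $\GL_s(\hat\Z)$, and turns the composition action into left matrix multiplication. Under these identifications $V(A)$ is a closed subgroup of $M$, and in fact a closed left $H(G)$-submodule: since $\Gal(K(\Gamma)\mid K_\infty)$ is normal in $\Gal(K(\Gamma)\mid K)$ and, by Remark~\ref{rem:exactSequenceAction} together with Proposition~\ref{prop:SequenceAut}, the conjugation action of the quotient $\Gal(K_\infty\mid K)$ is precisely the composition action, left multiplication by $H(G)$ preserves $V(A)$. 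I would then set $R:=\Mat_{s\times s}(\hat\Z)$ and $W:=R\cdot V(A)$.

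Next I would bound $W$ from below using Pontryagin duality. Since $V(A)\subseteq W$, the joint kernel $\bigcap_{f\in W}\ker f$ is contained in $\bigcap_{f\in V(A)}\ker f$, which under $\Phi_{\kumm}$ corresponds to the joint kernel $S(A)$; recall that $S(A)=(\Gamma\cap G(K_\infty))/(A+G_{\tors})$, as observed before Proposition~\ref{prop:JointKernelInH1}. Assumption~(3) says the exponent of $H^1(\Gal(K_\infty\mid K),G_{\tors})$ divides $m$, so Proposition~\ref{prop:JointKernelInH1} shows the exponent of $S(A)$ divides $d_Am$; that is, $\bigcap_{f\in W}\ker f\subseteq(\Q/\Z)^r[d_Am]$. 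Applying Proposition~\ref{prop:HigherDimensionalPontryagin} to the $R$-submodule $W\subseteq M$ then gives $W\supseteq(d_Am)M$.

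Finally I would apply Proposition~\ref{prop:AdelicMoveVectors} with $H=H(G)$, $V=V(A)$ and $W$ as above: its hypothesis~(1) is the inclusion $W\supseteq(d_Am)M$ just obtained, and its hypothesis~(2) is exactly assumption~(2). The conclusion is $V(A)\supseteq(d_Anm)M$. Since $[M:kM]=k^{rs}$ for every integer $k\geq 1$, the subgroup $(d_Anm)M$ has finite index, so $V(A)$ is open; and since $\Ent(A)\cong M/V(A)$ is a quotient of $M/(d_Anm)M$, its order divides $(d_Anm)^{rs}$.

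The routine analytic content is entirely absorbed into Section~\ref{sec:LinearAlgebra}, so the only real care is needed in the identifications of the first step: checking that conjugation in the Galois extension corresponds to left multiplication on $\Mat_{s\times r}(\hat\Z)$ (which is what makes $V(A)$ a genuine $H(G)$-module, hence lets Proposition~\ref{prop:AdelicMoveVectors} apply) and that $\Phi_{\kumm}$ really carries the joint kernel of $V(A)$ to the explicit group $(\Gamma\cap G(K_\infty))/(A+G_{\tors})$. After that the proof is just the concatenation of Propositions~\ref{prop:JointKernelInH1}, \ref{prop:HigherDimensionalPontryagin} and \ref{prop:AdelicMoveVectors}.
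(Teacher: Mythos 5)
Your proposal is correct and follows essentially the same route as the paper: fix the identification via Propositions~\ref{prop:StructureOfB} and \ref{prop:AutFullStruct}, use Proposition~\ref{prop:JointKernelInH1} together with assumptions (1) and (3) to bound the joint kernel $S(A)$, feed this into Proposition~\ref{prop:HigherDimensionalPontryagin} to get $W=R\cdot V(A)\supseteq d_Am\,M$, and conclude with Proposition~\ref{prop:AdelicMoveVectors} and assumption (2). Your explicit remark that the joint kernel of $W$ is contained in that of $V(A)$ is a small clarification the paper leaves implicit, but the argument is the same.
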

\begin{proof}
Let $\Gamma:=\set{P\in G(\Kbar)\mid \exists\,n\in\NN_{\geq 1}:\,nP\in A}$ and fix an isomorphism $\Gamma\overset\sim\to \Q^r\oplus (\Q/\Z)^s$ that sends $A$ to $\Z^r$ as in Proposition \ref{prop:StructureOfB}, so that we get a torsion-Kummer representation as in the previous subsection.We can then identify $H(G)$ with a subgroup of $\GL_s(\hat\Z)$ and $V(A)$ with a subgroup of $\Mat_{s\times r}(\hat\Z)$, and the natural action of $H(G)$ on $V(A)$ is indentified with the usual matrix multiplication on the left (see Proposition \ref{prop:AutFullStruct}).

Thanks to conditions (1) and (3) we can apply Proposition \ref{prop:JointKernelInH1} and deduce that
\begin{align*}
S(A)=\bigcap_{f\in V(A)}\ker f\subseteq (\Q/\Z)^r[d_Am],
\end{align*}
so that by Proposition \ref{prop:HigherDimensionalPontryagin} we have that the $\GL_s(\hat\Z)$-submodule of $\Mat_{s\times r}(\hat \Z)$ generated by $V(A)$ contains $d_Am \Mat_{s\times r}(\hat\Z)$. This property and (2) allow us to apply Proposition \ref{prop:AdelicMoveVectors} and deduce that the index of $V(A)$ in $\Mat_{s\times r}(\hat\Z)$ divides $(d_Anm)^{rs}$.
\end{proof}

\begin{rem}
Let $G=\mathbb{G}_m$ and let $A$ be a finitely generated and torsion-free subgroup of $G$ of rank $r>0$. Theorem \ref{thm:GeneralBoundEntanglement} gives us another way of proving \cite[Theorem 1.1]{peruccasgobba}, which states that there exists an integer $C\geq 1$ such that for every $n\geq 1$ the ratio
\begin{align}
\label{eqn:PS1}
\frac{n^r}{\left[K\left(\zeta_n,\sqrt[n]{A}\right):K\left(\zeta_n\right)\right]}
\end{align}
divides $C$. Indeed, the ratio \eqref{eqn:PS1} always divides $\#\Ent(A)$ (Lemma \ref{lemma:entBoundsDegrees}), and we have:
\begin{enumerate}[(1)]
\item The group $A$ has finite divisibility (see Example \ref{exa:divisibilityParam}).
\item The torsion representation $\tau:\Gal(K_\infty\mid  K)\to \GL_1(\hat\Z)=\hat\Z^\times$ coincides with the adelic cyclotomic character, whose image is open in $\hat\Z^\times$; more precisely, the index of $H(\mathbb{G}_m)$ in $\hat\Z^\times$ divides $[K:\Q]$, so that $\Z_\ell[H_\ell(\mathbb{G}_m)]\supseteq [K:\Q]\Mat_{s\times s}(\Z_\ell)$ for every prime $\ell$.
\item By (2) above $H(\mathbb{G}_m)$ contains every element of $\mathbb{Z}^\times$ that is congruent to the identity modulo $[K:\Q]$; an application of Sah's Lemma (see also the proof of Proposition \ref{prop:killH1forEC}) tells us that \[[K:\Q]H^1(\Gal(K_\infty\mid K),\mathbb{G}_{m,\tors})=0\,.\]
\end{enumerate}
So by Theorem \ref{thm:GeneralBoundEntanglement} we may take $C=(d_A\cdot [K:\Q]^2)^r$.

It is worth noticing that the methods of \cite{peruccasgobba} provide a more precise bound.
\end{rem}

\section{Elliptic curves}
\label{sec:ellipticNoCM}
For this section we fix a number field $K$ with algebraic closure $\Kbar$ and an elliptic curve $E$ over $K$ with $\End_K(E)=\Z$. Moreover, we let $A$ be a torsion-free subgroup of $E(K)$ of rank $r>0$ and let $\Gamma\subseteq E(\Kbar)$ be the subgroup defined in \eqref{eq:radicalGroup}, which is a full $2$-extension of $A$.

Our goal is to apply Theorem \ref{thm:GeneralBoundEntanglement} to get an explicit bound on the cardinality of $\Ent(A)$. In order to do so, we need to study the divisibility parameter $d_A$ and the torsion representations associated with $E/K$.

\subsection{The divisibility parameter}
\label{subsec:dParam}
If a set of generators for $A$, modulo torsion in $E(K)$, is known in terms of a $\Z$-basis for $E(K)/E(K)_{\tors}$, then we can compute $d_A$ effectively. In fact, let $\overline{E(K)}=E(K)/E(K)_{\tors}$ and let $\overline A$ be the image of $A$ in $\overline{E(K)}$. Let $\mathbf{e}_1,\dots,\mathbf{e}_\rho$ be a basis for $\overline{E(K)}$ as a free $\Z$-module and let $\mathbf{a}_1,\dots,\mathbf{a}_t$ be a set of generators for $\overline A$. Write
\begin{align*}
\mathbf{a}_i=\sum_{j=1}^\rho m_{ij}\mathbf{e}_j
\end{align*}
for some integers $m_{ij}$, and let $M$ be the $\rho\times t$ matrix $(m_{ji})$ whose columns are the coordinate vectors representing the $\mathbf{a}_i$.

We can then reduce $M$ to its \emph{Smith Normal Form} (see \cite[Chapter 3]{jacobson}), that is, we can find matrices $P\in \GL_\rho(\Z)$ and $Q\in\GL_t(\Z)$ such that
\begin{align*}
PMQ =\left(\begin{array}{cccccccc}
d_1 & 0 & \cdots & \cdots & \cdots & \cdots & 0 \\
0 & d_2 & &  & & & \vdots\\
\vdots & & \ddots & & & & \vdots\\
\vdots & & & d_r & & & \vdots\\
\vdots & & & & 0& & \vdots \\
\vdots & & & & & \ddots & \vdots \\
0 & \cdots & \cdots & \cdots & \cdots & \cdots & 0
\end{array}\right)
\end{align*}

where $d_1,\dots,d_r$ are integers such that $d_1\mid d_2\mid\cdots\mid d_r$ and $r$ is the rank of $A$. The integers $d_i$ are uniquely determined up to sign, and they are easily computable from the minors of $M$ (see \cite[Theorem 3.9]{jacobson}).

It follows that there is a $\Z$-basis $\{\mathbf{f}_1,\dots, \mathbf{f}_\rho\}$ of $\overline{E(K)}$ such that $\{d_1\mathbf{f}_1,\dots,d_r\mathbf{f}_r\}$ is a $\Z$-basis for $A$. Moreover, if $\Gamma$ is defined as in \eqref{eq:radicalGroup}, we have that $(\Gamma\cap E(K))/E(K)_{\tors}$ is generated by $\{\mathbf{f}_1,\dots,\mathbf{f}_r\}$. We then have that $d_r(\Gamma\cap E(K))\subseteq A+E(K)_{\tors}$, so we can take $d_A=d_r$.

\subsection{The torsion representation}

The torsion representation is nothing but the usual Galois representation attached to the torsion of $E$. After a choice of basis, we will denote it by
\begin{align*}
\tau_\infty:\Gal(K_\infty\mid K)\to \GL_2(\hat\Z)
\end{align*}
and we will denote its image by $H(E)$. If $\ell$ is a prime we will denote by $\tau_{\ell}$ the composition of $\tau_\infty$ with the natural projection $\GL_2(\hat\Z)\to \GL_2(\Z_\ell)$ and by $H_{\ell}(E)$ the image of $\tau_{\ell}$.

\subsubsection{The non-CM case} If $E$ does not have complex multiplication over $\Kbar$,
by Serre's Open Image Theorem (see \cite{serre72}) we know that there exist:
\begin{itemize}
\item an integer $m_E\geq 1$ such that $H(E)$ contains all the elements of $\GL_2(\hat\Z)$ that are congruent to the identity modulo $m_E$ (in particular, $H_{\ell}(E)=\GL_2(\Z_\ell)$ for $\ell\nmid m_E$);
\item for every prime number $\ell$, an integer $n_\ell\geq 1$ such that $H_{\ell}(E)\supseteq I+\ell^{n_\ell}\Mat_{2\times 2}(\Z_\ell)$.
\end{itemize}

\begin{rem}
Notice that, if an explicit bound for $m_E$ is known, one can easily give a bound for each $n_\ell$ by just letting $n_\ell=\max(1,v_\ell(m_E))$. However, it is possible to give an effective bound for each $n_\ell$ (see \cite[Theorem 14 and Remark 15]{lombardoperucca} and \cite[Remark 3.7]{lt}), so we will keep these constants separate.
\end{rem}

\begin{defi}
We call \emph{adelic bound} for the torsion representation a positive even integer $m_E$ such that $H(E)$ contains all the elements of $\GL_2(\hat\Z)$ congruent to the identity modulo $m_E$. If $\ell$ is a prime, we call an integer $n_\ell\geq 1$ such that $H_{\ell}(E)\supseteq I+\ell^{n_\ell}\Mat_{2\times 2}(\Z_\ell)$ a \emph{parameter of maximal growth} for the $\ell$-adic torsion representation. If $\ell=2$ we require $n_\ell\geq 2$.
\end{defi}

\begin{prop}
\label{prop:killH1forEC}
If $m_E$ is an adelic bound for the torsion representation of $E$ over $K$, then $m_EH^1(\Gal(K_\infty\mid K), E_{\tors})=0$. 
\end{prop}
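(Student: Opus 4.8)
The plan is to use the classical argument bounding the exponent of a Galois cohomology group $H^1(\Gal(L\mid K), T)$ by exploiting the existence of a \emph{scalar} element in the image of the Galois representation that acts without fixed points on the torsion module. This is the standard ``Sah's lemma'' trick, already alluded to in the remark on $\G_m$ after Theorem \ref{thm:GeneralBoundEntanglement}. Concretely, recall that Sah's lemma states: if a group $\mathcal{G}$ acts on an abelian group $T$ and $z$ is a \emph{central} element of $\mathcal{G}$, then $(z-1)$ annihilates $H^1(\mathcal{G}, T)$. Here $\mathcal{G} = \Gal(K_\infty\mid K)$, identified via $\tau_\infty$ with $H(E) \subseteq \GL_2(\hat\Z)$, and $T = E_{\tors} \cong (\Q/\Z)^2$, with $\GL_2(\hat\Z)$ acting in the natural way through the identification $\Aut(E_{\tors}) \cong \GL_2(\hat\Z)$ of Remark \ref{rem:HigherDimPontryagin}.

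The key step is to produce a suitable central element. By the definition of an adelic bound, $H(E)$ contains every element of $\GL_2(\hat\Z)$ congruent to the identity modulo $m_E$. In particular, since $m_E$ is even (hence at least $2$), the scalar matrix $z := (1+m_E)\cdot I \in \GL_2(\hat\Z)$ is congruent to $I$ modulo $m_E$ and therefore lies in $H(E)$. Being a scalar, $z$ is central in $\GL_2(\hat\Z)$, hence central in $H(E)$. Sah's lemma then gives that $(z - I) = m_E \cdot I$ annihilates $H^1(\Gal(K_\infty\mid K), E_{\tors})$; that is, $m_E H^1(\Gal(K_\infty\mid K), E_{\tors}) = 0$, which is exactly the claim.

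One point that requires a brief word of care: $K_\infty = K(E_{\tors})$ is an infinite extension, so $\Gal(K_\infty\mid K)$ is a profinite group and $H^1$ should be understood as continuous cohomology, with $E_{\tors}$ discrete. I would verify that Sah's lemma applies in this profinite/continuous setting — it does, since $H^1_{\mathrm{cont}}(\Gal(K_\infty\mid K), E_{\tors}) = \varinjlim_n H^1(\Gal(K(E[n])\mid K), E[n])$ and the scalar $z$ reduces modulo each $n$ to a central element of $\Gal(K(E[n])\mid K)$ acting as multiplication by $1+m_E$ on $E[n]$, so the finite-level Sah's lemma applies uniformly and passes to the limit.

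I expect no serious obstacle here: the argument is short once the right central element is identified. The only mildly delicate point is the verification that the relevant element of $\GL_2(\hat\Z)$ genuinely lies in $H(E)$ and is central — both are immediate from the definition of adelic bound and the fact that scalar matrices are central — together with the bookkeeping to make sure the cohomology is the continuous one and that Sah's lemma is being invoked in the correct generality.
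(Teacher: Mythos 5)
Your overall strategy is exactly the paper's: apply Sah's lemma to a central scalar element of $H(E)$ congruent to the identity modulo $m_E$. But the specific element you propose does not exist in the group. The matrix $(1+m_E)\cdot I$ is \emph{not} an element of $\GL_2(\hat\Z)$: since $m_E\geq 2$, the integer $1+m_E$ is divisible by some prime $p$, hence its image in $\Z_p$ is not a unit, so $1+m_E\notin\hat\Z^\times$ and $(1+m_E)I$ is not invertible. The definition of adelic bound only guarantees that elements \emph{of $\GL_2(\hat\Z)$} congruent to $I$ modulo $m_E$ lie in $H(E)$, and $H(E)$ is a subgroup of $\GL_2(\hat\Z)$, so it cannot contain your candidate. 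Your finite-level fallback has the same defect: multiplication by $1+m_E$ on $E[n]$ fails to be an automorphism whenever $\gcd(1+m_E,n)>1$ (for instance $n=1+m_E$), so there need not be any element of $\Gal(K(E[n])\mid K)$ acting that way. Since producing a valid central element is essentially the entire content of the proof, this is a genuine gap.

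The repair is to build the scalar prime by prime, which is what the paper does. Take $z=(z_\ell)_\ell\in\hat\Z$ with $z_\ell=1+\ell^{v_\ell(m_E)}$ for $\ell\mid m_E$ and $z_\ell=2$ for $\ell\nmid m_E$; the latter is a unit in $\Z_\ell$ because $m_E$ is required to be even, so every $\ell\nmid m_E$ is odd. Then $z\in\hat\Z^\times$, the scalar matrix $zI$ lies in $\GL_2(\hat\Z)$ and is congruent to $I$ modulo $m_E$, hence lies in $H(E)$ and is central there. Moreover $v_\ell(z-1)=v_\ell(m_E)$ for every $\ell$, so $z-1=um_E$ with $u\in\hat\Z^\times$, and Sah's lemma gives $m_EH^1(\Gal(K_\infty\mid K),E_{\tors})=0$. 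Your remark about interpreting $H^1$ as continuous cohomology and passing to the limit over finite levels is fine once the element is chosen correctly.
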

\begin{proof}
  Let  $G=\Gal(K_\infty\mid K)$ and let $z=(z_\ell)_\ell\in \hat\Z=\prod_{\ell}\Z_\ell$ be defined as
\begin{align*}
z_\ell=\begin{cases}
1+\ell^{v_\ell(m_E)} & \text{if } \ell \mid m_E,\\
2 &\text{if }\ell\nmid m_E.
\end{cases}
\end{align*}
Since by definition $2\mid m_E$ we have $z\in\hat\Z^\times$. Moreover $z-1=um_E$ for some $u\in\hat\Z^\times$.

Consider now the element $g=z I\in \GL_2(\hat\Z)$: it is congruent to the identity matrix modulo $m_E$, so it lies in $G$; moreover it is a scalar matrix, so it lies in the center of $G$. By Sah's Lemma (see \cite[Lemma A.2]{baker-ribet}) the endomorphism of $H^1(G,E_{\tors})$ defined by $f\mapsto (g-I)f$ kills $H^1(G, E_{\tors})$. Since $g-I=um_E I$ for $u\in \hat\Z^\times$, we have that $m_EH^1(G,E_{\tors})=0$, as required.
\end{proof}

\begin{defi}
\label{defi:setS}
Let $K$ be a number field with absolute discriminant $\Delta_K$ and let $E$ be an elliptic curve over $K$ without CM over $\Kbar$. We denote by $S(E)$ the finite set of primes $\ell$ that satisfy at least one of the following conditions:
\begin{enumerate}[(1)]
\item $\ell\mid 2\cdot 3\cdot 5 \cdot\Delta_K$;
\item the Galois group $\Gal(K_\ell\mid K)$ is not isomorphic to $\GL_2(\mathbb{F}_\ell)$.
\item $E$ has bad reduction at some prime of $K$ of characteristic $\ell$.
\end{enumerate}
\end{defi}

\begin{rem}
The set $S(E)$ is effectively computable (see \cite[Remark 5.2]{lt}).
\end{rem}

An explicit value for the adelic bound $m_E$ is provided by the following result by F. Campagna and P. Stevenhagen:

\begin{thm}[{\cite[Theorem 3.4]{campstev}}]
\label{thm:CampagnaStevenhagen}
Let $E$ be an elliptic curve over $K$ without CM over $\Kbar$.
Write $K_{\ell^\infty}$ for the compositum of all $\ell$-power division fields of $E$ over $K$, and $K_{S(E)}$ for the compositum of the fields $K_{\ell^\infty}$ with $\ell\in {S(E)}$. Then the family consisting of $K_{S(E)}$ and $\{K_{\ell^\infty}\}_{\ell\not\in {S(E)}}$ is linearly disjoint over $K$, that is, the natural map
\begin{align*}
\Gal(K_\infty\mid K)\to \Gal(K_{S(E)}\mid K)\times\prod_{\ell\not \in {S(E)}}\Gal(K_{\ell^\infty}\mid K)
\end{align*}
is an isomorphism.
\end{thm}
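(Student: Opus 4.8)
The plan is to deduce the stated linear disjointness from two ingredients: (i) $\ell$-adic surjectivity of the Galois representation for $\ell\notin S(E)$, obtained from mod-$\ell$ surjectivity via Serre's lemma, and (ii) a ramification argument ruling out the only entanglement that such large $\ell$-adic images could still produce, namely through the cyclotomic (determinant) character. First I would record that the compositum of $K_{S(E)}$ with all the $K_{\ell^\infty}$, $\ell\notin S(E)$, is $K_\infty$, and that the map in the statement is the restriction map, which is injective for free (an automorphism of $K_\infty$ fixing all of these subfields fixes their compositum). So the entire content is surjectivity, i.e.\ linear disjointness over $K$ of the family $\{K_{S(E)}\}\cup\{K_{\ell^\infty}\}_{\ell\notin S(E)}$. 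Since $\Gal(K_\infty\mid K)$ is profinite its image in the product is closed, so it is enough to prove surjectivity onto every finite subproduct: for distinct primes $\ell_1,\dots,\ell_k\notin S(E)$ I must show that $\Gal(K_\infty\mid K)$ maps onto $\Gal(K_{S(E)}\mid K)\times\prod_{i=1}^k\Gal(K_{\ell_i^\infty}\mid K)$.

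Next I would establish the two ingredients. For (i): if $\ell\notin S(E)$ then $\ell\nmid 2\cdot 3\cdot 5\cdot\Delta_K$, so $\ell\geq 7\geq 5$; since condition~(3) of Definition~\ref{defi:setS} does not hold for $\ell$, the group $\Gal(K_\ell\mid K)$ is abstractly isomorphic to $\GL_2(\mathbb{F}_\ell)$, and as the mod-$\ell$ representation realizes it as a subgroup of $\GL_2(\mathbb{F}_\ell)$ of equal cardinality it must be all of $\GL_2(\mathbb{F}_\ell)$; Serre's classical lemma (a closed subgroup of $\GL_2(\Z_\ell)$ surjecting onto $\GL_2(\mathbb{F}_\ell)$ is the whole group when $\ell\geq 5$) then gives $\Gal(K_{\ell^\infty}\mid K)=\GL_2(\Z_\ell)$. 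For (ii): if $\ell\notin S(E)$ then $E$ has good reduction at every prime of $K$ above $\ell$ (condition~(2)), so by the criterion of N\'eron--Ogg--Shafarevich each $K_{\ell'^\infty}$ with $\ell'\neq\ell$ is unramified over $K$ above $\ell$, hence so are $K_{S(E)}$ and any compositum of such fields; on the other hand $\ell\nmid\Delta_K$ (condition~(1)) makes $K/\mathbb{Q}$ unramified at $\ell$ while $\mathbb{Q}(\zeta_{\ell^\infty})/\mathbb{Q}$ is totally ramified there, so $\Gal(K(\zeta_{\ell^\infty})\mid K)\cong\Z_\ell^\times$ with inertia above $\ell$ equal to the whole group, whence every subfield of $K(\zeta_{\ell^\infty})$ strictly larger than $K$ is ramified above $\ell$.

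Then I would run an induction on $k$ with Goursat's lemma as the engine. Assuming $G:=\Gal(K_\infty\mid K)$ already surjects onto $\Gal(L\mid K)$ for $L:=K_{S(E)}\cdot K_{\ell_1^\infty}\cdots K_{\ell_{k-1}^\infty}$, and since $G$ also surjects onto $\Gal(K_{\ell_k^\infty}\mid K)=\GL_2(\Z_{\ell_k})$, Goursat's lemma says the image of $G$ in $\Gal(L\mid K)\times\GL_2(\Z_{\ell_k})$ is everything if and only if the common quotient $\Gal(L\cap K_{\ell_k^\infty}\mid K)$ is trivial. Here I would use that, for $\ell_k\geq 5$, any quotient of $\GL_2(\Z_{\ell_k})$ is either abelian (hence a quotient of the determinant map $\GL_2(\Z_{\ell_k})\to\Z_{\ell_k}^\times$, since $[\GL_2(\Z_{\ell_k}),\GL_2(\Z_{\ell_k})]=\SL_2(\Z_{\ell_k})$) or admits the simple group $\PSL_2(\mathbb{F}_{\ell_k})$ as a subquotient; whereas the only non-abelian composition factors of $\Gal(L\mid K)\subseteq\prod_{\ell'\in S(E)\cup\{\ell_1,\dots,\ell_{k-1}\}}\GL_2(\Z_{\ell'})$ are the groups $\PSL_2(\mathbb{F}_{\ell'})$ with $\ell'$ in that index set, all different from $\PSL_2(\mathbb{F}_{\ell_k})$ because the primes involved are pairwise distinct, $\PSL_2(\mathbb{F}_p)\not\cong\PSL_2(\mathbb{F}_q)$ for distinct primes $p,q\geq 5$, and $\PSL_2(\mathbb{F}_{\ell_k})$ (with $\ell_k\geq 7$) is not a composition factor of $\GL_2(\Z_2)$ or $\GL_2(\Z_3)$. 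Hence $\Gal(L\cap K_{\ell_k^\infty}\mid K)$ is abelian, so $L\cap K_{\ell_k^\infty}$ lies in the maximal abelian subextension of $K_{\ell_k^\infty}/K$, which by ingredient~(i) and the determinant computation is exactly $K(\zeta_{\ell_k^\infty})$; combined with the ramification statement in (ii) — $L$ unramified over $K$ above $\ell_k$, every nontrivial subextension of $K(\zeta_{\ell_k^\infty})/K$ ramified there — this forces $L\cap K_{\ell_k^\infty}=K$. Thus the common quotient is trivial, $G$ surjects onto $\Gal(L\mid K)\times\GL_2(\Z_{\ell_k})$, and the induction closes.

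I expect the genuine difficulty to be the group theory in this last step: pinning down exactly which finite simple groups occur as composition factors of the relevant Galois groups and making airtight the dichotomy ``a quotient of $\GL_2(\Z_\ell)$ is abelian or sees $\PSL_2(\mathbb{F}_\ell)$'' for $\ell\geq 5$ (this is precisely where the definition of $S(E)$ is used, in particular $\ell\nmid 2\cdot 3\cdot 5\cdot\Delta_K$ and the good-reduction condition). The reductions to finite level and the ramification bookkeeping I expect to be routine by comparison.
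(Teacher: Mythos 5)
This statement is not proved in the paper at all: it is imported verbatim as \cite[Theorem 3.4]{campstev}, so there is no internal argument to compare yours against. Your reconstruction is sound and in fact follows the same strategy as the cited source: reduce to finite subproducts by compactness, use surjectivity of the mod-$\ell$ (hence, by Serre's lifting lemma for $\ell\geq 5$, of the $\ell$-adic) representation for $\ell\notin S(E)$, split the possible entanglement into a non-solvable part killed by comparing composition factors and an abelian part killed by ramification at $\ell$ via N\'eron--Ogg--Shafarevich and total ramification of $\Q(\zeta_{\ell^\infty})/\Q$. The Goursat/induction bookkeeping and the identification of the maximal abelian subextension of $K_{\ell^\infty}/K$ with $K(\zeta_{\ell^\infty})$ (via $[\GL_2(\Z_\ell),\GL_2(\Z_\ell)]=\SL_2(\Z_\ell)$ and the Weil pairing) are all correct.

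One point deserves tightening. You assert that the only non-abelian composition factors of $\Gal(L\mid K)\subseteq\prod_{\ell'}\GL_2(\Z_{\ell'})$ are the groups $\PSL_2(\mathbb{F}_{\ell'})$ for $\ell'$ in the index set. Composition factors of a \emph{subgroup} of a finite group need only be composition factors of subgroups of its simple factors, and by Dickson's classification a subgroup of $\PSL_2(\mathbb{F}_{\ell'})$ may contribute $A_5\cong\PSL_2(\mathbb{F}_5)$ even when $\ell'\neq 5$. This does not break your argument --- since $5\in S(E)$ by condition (1) of Definition \ref{defi:setS}, every $\ell_k\notin S(E)$ satisfies $\ell_k\geq 7$, and $\PSL_2(\mathbb{F}_{\ell_k})\not\cong A_5$ and $\not\cong\PSL_2(\mathbb{F}_{\ell'})$ for $\ell'\neq\ell_k$ --- but the statement as you phrased it is not literally what Jordan--H\"older gives you, and the correct version is exactly where the inclusion of $5$ (and $2$, $3$) in $S(E)$ earns its keep. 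The remaining ``genuine difficulty'' you flag, namely that every non-abelian quotient of $\GL_2(\Z_\ell)$ for $\ell\geq 5$ has $\PSL_2(\mathbb{F}_\ell)$ as a composition factor, is a true and standard fact (it follows from the classification of closed normal subgroups of $\SL_2(\Z_\ell)$), so your outline closes.
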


\begin{rem}
\label{rem:elladicSurjOutsideS}
For every prime $\ell\not\in S(E)$, the $\ell$-adic representation associated with $E$ is surjective. This follows from the fact that the $\bmod \,\ell$ torsion representation associated with $E$ and the the $\ell$-adic cyclotomic character of $K$ are both surjective (since $\ell\nmid \Delta_K$): in fact in this case we have $(H(E)\bmod \ell)\supseteq\SL_2(\Z/\ell\Z)$ and $\det(H_{\ell}(E))=\Z_\ell^\times$, which implies (see \cite[IV-23]{serre1998abelian}) that $H_{\ell}(E)=\GL_2(\Z_\ell)$.
\end{rem}

\begin{cor}
\label{cor:boundmE}
For every prime $\ell\in S(E)$ let $n_\ell$ be a parameter of maximal growth for the $\ell$-adic torsion representation. Let moreover $R:=\prod_{\ell\in S(E)}\ell$ and $m_\ell = v_\ell\left([ K_R:K]\right)$.
Then an adelic bound for the torsion representation is given by
\begin{align*}
m_E = \prod_{\ell\in S(E)}\ell^{n_\ell+m_\ell}.
\end{align*}
\end{cor}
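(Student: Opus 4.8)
The plan is to reduce, via Theorem~\ref{thm:CampagnaStevenhagen} and Remark~\ref{rem:elladicSurjOutsideS}, to a statement about the entanglement among the finitely many primes in $S(E)$, and then to analyse that entanglement one prime at a time. The torsion representation is faithful, so we identify $H(E)$ with $\Gal(K_\infty\mid K)$ inside $\GL_2(\hat\Z)=\prod_\ell\GL_2(\Z_\ell)$. For $\ell\notin S(E)$ the $\ell$-adic torsion representation is surjective (Remark~\ref{rem:elladicSurjOutsideS}) and, by Theorem~\ref{thm:CampagnaStevenhagen}, the corresponding components split off as a direct factor of $H(E)$; since $m_E$ is supported on $S(E)$, the congruence ``$\equiv I\pmod{m_E}$'' is vacuous on those components. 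Writing $G:=\Gal(K_{S(E)}\mid K)$ and viewing it, via its (injective) torsion representation, as a closed subgroup of $\prod_{\ell\in S(E)}\GL_2(\Z_\ell)$, it therefore suffices to prove the inclusion
\[
  G\ \supseteq\ \prod_{\ell\in S(E)}\bigl(I+\ell^{\,n_\ell+m_\ell}\Mat_{2\times 2}(\Z_\ell)\bigr).
\]

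For $\ell\in S(E)$ let $\widehat K_\ell$ be the compositum of the fields $K_{\ell'^{\infty}}$ with $\ell'\in S(E)\setminus\{\ell\}$, and let $G^{(\ell)}\subseteq\GL_2(\Z_\ell)$ be the image of $\Gal(K_{S(E)}\mid\widehat K_\ell)$ under the $\ell$-adic representation; equivalently, $G^{(\ell)}$ consists of those $g$ for which the tuple with $\ell$-component $g$ and all other components equal to $I$ lies in $G$. The subgroups $G^{(\ell)}$ for distinct $\ell$ commute inside $G$ and pairwise intersect trivially, so their product inside $G$ is direct; hence it is enough to prove $G^{(\ell)}\supseteq I+\ell^{\,n_\ell+m_\ell}\Mat_{2\times 2}(\Z_\ell)$ for each $\ell\in S(E)$.

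The heart of the argument is the size of $Q:=H_\ell(E)/G^{(\ell)}$. By Galois theory $Q\cong\Gal(K_{\ell^\infty}\cap\widehat K_\ell\mid K)$, and I claim that $K_{\ell^\infty}\cap\widehat K_\ell\subseteq K_R=K(E[R])$, whence $\#Q\mid[K_R:K]$ and in particular $v_\ell(\#Q)\le m_\ell$. Indeed, put $L:=K_{\ell^\infty}\cap\widehat K_\ell$ and consider $\Gal(LK_R\mid K_R)$. On one hand $L\subseteq K_{\ell^\infty}$ and $K_\ell\subseteq K_R$, so this group is a quotient of $\Gal(K_{\ell^\infty}\mid K_{\ell^\infty}\cap K_R)\subseteq\Gal(K_{\ell^\infty}\mid K_\ell)$, hence pro-$\ell$. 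On the other hand $L\subseteq\widehat K_\ell$ and the compositum of the $K_{\ell'}$ with $\ell'\in S(E)\setminus\{\ell\}$ lies in $K_R$, so the same group is a quotient of a subgroup of $\Gal\bigl(\widehat K_\ell\mid\textstyle\prod_{\ell'\neq\ell}K_{\ell'}\bigr)$, which embeds into $\prod_{\ell'\in S(E)\setminus\{\ell\}}\Gal(K_{\ell'^\infty}\mid K_{\ell'})$ and thus has order prime to $\ell$, since each $\Gal(K_{\ell'^\infty}\mid K_{\ell'})$ sits inside the pro-$\ell'$ group $I+\ell'\Mat_{2\times 2}(\Z_{\ell'})$. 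A profinite group that is simultaneously pro-$\ell$ and of order prime to $\ell$ is trivial, so $LK_R=K_R$ and $L\subseteq K_R$.

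To conclude, note that $I+\ell^{\,n_\ell}\Mat_{2\times 2}(\Z_\ell)\subseteq H_\ell(E)$, so its image in $Q$ is a finite $\ell$-group of order dividing $\ell^{\,v_\ell(\#Q)}\mid\ell^{\,m_\ell}$; hence $x^{\ell^{m_\ell}}\in G^{(\ell)}$ for every $x\in I+\ell^{\,n_\ell}\Mat_{2\times 2}(\Z_\ell)$. A routine computation with the $\ell$-th power map on $\GL_2(\Z_\ell)$ — using $n_\ell\ge1$, and $n_\ell\ge2$ when $\ell=2$, which is precisely why the definition of a parameter of maximal growth requires $n_2\ge2$ — shows that raising to the $\ell^{m_\ell}$-th power carries $I+\ell^{\,n_\ell}\Mat_{2\times 2}(\Z_\ell)$ onto $I+\ell^{\,n_\ell+m_\ell}\Mat_{2\times 2}(\Z_\ell)$. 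Therefore $G^{(\ell)}\supseteq I+\ell^{\,n_\ell+m_\ell}\Mat_{2\times 2}(\Z_\ell)$, which gives the displayed inclusion; and since $2\in S(E)$ with $n_2\ge2$ we get $4\mid m_E$, so $m_E$ is the required positive even integer. I expect the field-theoretic claim $L\subseteq K_R$ — that all entanglement between the $\ell$-adic tower and the remaining towers in $S(E)$ is already visible over $K_R$ — to be the main obstacle, together with the $\ell=2$ bookkeeping in the power-map step.
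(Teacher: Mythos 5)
Your proof is correct and follows essentially the same route as the paper's: both reduce to the primes of $S(E)$ via Theorem~\ref{thm:CampagnaStevenhagen}, both use the pro-$\ell$ nature of $\Gal(K_{\ell^\infty}\mid K_\ell)$ to show that all cross-prime entanglement inside $S(E)$ is already captured over $K_R$, and both degrade the growth parameter by $m_\ell=v_\ell([K_R:K])$ through an index/power-map argument. The difference is purely organizational: the paper base-changes to $K_R$ once, invokes linear disjointness of the towers $K_{\ell^\infty R}$ over $K_R$, and cites \cite[Lemma 3.10]{lt} for the power-map step, whereas you argue prime by prime with the subgroups $G^{(\ell)}$ and prove both ingredients from scratch.
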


\begin{proof}
It will be enough to show that the image of $\Gal(K_\infty\mid K)$ in $\GL_2(\hat\Z)$ contains
\begin{align*}
\prod_{\ell\in S(E)}\left(I+\ell^{m_\ell+n_\ell}\Mat_{2\times 2}(\Z_\ell)\right)\times \prod_{\ell\not\in S(E)} \GL_2(\Z_\ell)\,.
\end{align*}
We will do so by considering the subgroup $\Gal(K_\infty\mid K_R)$ of $\Gal(K_\infty\mid K)$.

Notice that, since for every prime $\ell$ and every $n\geq 1$ the degree of $K_{\ell^n}$ over $K_\ell$ is a power of $\ell$, the family $\{K_{\ell^\infty R}\}_{\ell\in S(E)}$ is linearly disjoint over $K_R$. Then we have
\begin{align*}
\Gal(K_\infty\mid K_R)=& \Gal(K_{S(E)}\mid K_R)\times\prod_{\ell\not \in {S(E)}}\Gal(K_{\ell^\infty}\mid K)=\\
=& \prod_{\ell\in {S(E)}}\Gal(K_{\ell^\infty R}\mid K_R)\times\prod_{\ell\not \in {S(E)}}\Gal(K_{\ell^\infty}\mid K).
\end{align*}
For every $\ell\in S(E)$ we have $\tau_{\ell}(\Gal(K_{\ell^\infty R}\mid K_R))\supseteq I+\ell^{r_\ell}\Mat_{2\times 2}(\Z_\ell)$, where $r_\ell$ is a parameter of maximal growth for the $\ell$-adic torsion representation attached to $E$ over $K_R$. By \cite[Lemma 3.10]{lt} we can take $r_\ell\leq n+m_\ell$, so $\rho_{\infty}(\Gal(K_\infty\mid K_R))$ contains
\begin{align*}
\prod_{\ell\in {S(E)}}\left(I+\ell^{n_\ell+m_\ell}\Mat_{2\times 2}(\Z_\ell)\right)\times\prod_{\ell\not \in {S(E)}}\GL_2(\Z_\ell)
\end{align*}
so it contains all elements that are congruent to $I$ modulo $m_E$, as required.
\end{proof}

\begin{rem}
\label{rem:boundmE}
We can give an explicit bound for the integers $m_\ell$ of the above corollary:
\begin{align*}
m_\ell= v_\ell\left([K_R:K]\right)\leq v_\ell\left(\#\GL_2\left({\Z}/{R\Z}\right)\right)=\sum_{p \in S(E)}v_\ell\left((p^2-1)(p^2-p)\right).
\end{align*}
\end{rem}

\subsubsection{The CM case.}

The torsion representations associated with elliptic curves with complex multiplication have been studied for example in \cite{deur1} and \cite{deur2}. They are deeply related to the endomorphism ring $\mathcal{O}_E=\End_{\Kbar}(E)$ of $E$, which is an order in an imaginary quadratic number field $F$.

For every prime $\ell$, the group
\begin{align*}
\mathcal{C}_{\ell}(E):=\left(\mathcal{O}_E\otimes_{\Z}\Z_\ell\right)^\times
\end{align*}
can be identified with a subgroup of $\GL_2(\Z_\ell)$ via the action of $\mathcal O_E$ on the $\ell$-power torsion of $E$, and is called the \emph{Cartan subgroup of $\GL_2(\Z_\ell)$} associated with $E$. We also let
\begin{align*}
\mathcal{C}(E):=\left(\mathcal{O}_E\otimes_{\Z}\hat\Z\right)^\times=\prod_{\ell\text{ prime}}\mathcal{C}_{\ell}(E)
\end{align*}
which can be identified with a subgroup of $\GL_2(\hat\Z)$, and we denote by $\mathcal{N}_{\ell}(E)$ and $\mathcal{N}(E)$ the normalizers of $\mathcal{C}_{\ell}(E)$ in $\GL_2(\Z_\ell)$ and of $\mathcal{C}(E)$ in $\GL_2(\hat\Z)$, respectively.

The group $\mathcal{C}_\ell(E)$ is always conjugate to a subgroup of $\GL_2(\Z_\ell)$ of the form
\begin{align*}
\left\{ \begin{pmatrix}
x & \delta y \\ y & x+\gamma y
\end{pmatrix} : x,y \in \Z_{\ell},\; v_{\ell}(x(x+\gamma y)-\delta y^2)=0 \right\}
\end{align*}
for some integers $\gamma$ and $\delta$, which are called \emph{parameters} for $\mathcal{C}_\ell(E)$ (see \cite[\S 2.3]{lombardoperucca1eigenspace}).

The image of the torsion representation associated with $E$ is contained in $\mathcal{N}(E)$, and can be described as follows.

\begin{prop}[{\cite[Theorem 1.5]{lombardoGRCM}}]
  \label{prop:CMindex}
Let $E$ be an elliptic curve over $K$ with CM over $\Kbar$, and let $F$ be the CM field of $E$. Let $\mathcal S$ denote the set of primes $\ell$ that either ramify in $K\cdot F$ or are such that $E$ has bad reduction at some prime of $K$ of characteristic $\ell$. Then:
\begin{enumerate}
\item if $F\subseteq K$, then $H(E)\subseteq \mathcal{C}(E)$ and $[\mathcal C(E):H(E)]$ divides $6[K:\Q]$. Moreover, $H_{\ell}(E)=\mathcal{C}_{\ell}(E)$ for every $\ell\not\in \mathcal S$;
\item if $F\not\subseteq K$, then $H(E)\subseteq \mathcal{N}(E)$, but $H(E)\not\subseteq \mathcal C(E)$, and $[\mathcal C(E):\mathcal C(E)\cap H(E)]$ divides $12[K:\Q]$. Moreover, $H_{\ell}(E)=\mathcal{N}_{\ell}(E)$ for every $\ell\not\in \mathcal S$.
\end{enumerate}
\end{prop}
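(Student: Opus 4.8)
The statement is \cite[Theorem 1.5]{lombardoGRCM}, so one option is simply to invoke it; the plan for a self-contained argument, built entirely from the main theorem of complex multiplication, would run as follows.

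First I would treat the case $F\subseteq K$. Here $\End_K(E)=\End_{\Kbar}(E)=\mathcal O_E$, so every $\sigma\in\Gal(\Kbar\mid K)$ commutes with the $\mathcal O_E$-action on $E[\ell^\infty]$; hence the image of $\tau_\ell$ lies in the centralizer of $\mathcal O_E\otimes_\Z\Z_\ell$ inside $\GL_2(\Z_\ell)$, which is exactly $\mathcal C_\ell(E)=(\mathcal O_E\otimes_\Z\Z_\ell)^\times$, and taking the product over all $\ell$ gives $H(E)\subseteq\mathcal C(E)$. In particular $H(E)$ is abelian, so $K_\infty\mid K$ is an abelian extension and class field theory applies: the representation $\Gal(K_\infty\mid K)\hookrightarrow\mathcal C(E)$ corresponds, via the Artin reciprocity map, to an algebraic Hecke character $\psi$ of $K$ valued in $F^\times$. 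The main theorem of complex multiplication identifies $\psi$ up to finite data and shows in particular that its conductor is divisible only by primes of bad reduction of $E$ and primes ramifying in $K\cdot F=K$, hence is supported inside the set $\mathcal S$.

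Next I would bound the index $[\mathcal C(E):H(E)]$ by an idelic computation. The image $H(E)$ contains the image under $\psi$ of the group of units at all finite places of $K$, which surjects onto $\prod_{\ell\notin\mathcal S}\mathcal C_\ell(E)$ precisely because $\psi$ is unramified there; this already yields $H_\ell(E)=\mathcal C_\ell(E)$ for every $\ell\notin\mathcal S$. The remaining cokernel is controlled by the global units $\mathcal O_K^\times$ together with the roots of unity in $F$: the former contribute a factor whose exponent is bounded in terms of $[K:\Q]$ (coming from the cyclotomic-type twist needed to promote $\psi$ to an idele-class character), while the latter contribute a factor dividing $\#\mathcal O_F^\times\leq 6$. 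Combining these bounds gives that $[\mathcal C(E):H(E)]$ divides $6[K:\Q]$.

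Finally I would deduce the case $F\not\subseteq K$ by base change to $L:=K\cdot F$, which satisfies $F\subseteq L$ and $[L:\Q]=2[K:\Q]$. Writing $H'$ for the image of the torsion representation of $E$ over $L$, the previous case gives $H'\subseteq\mathcal C(E)$ with $[\mathcal C(E):H']$ dividing $6[L:\Q]=12[K:\Q]$ and $H'_\ell=\mathcal C_\ell(E)$ for $\ell\notin\mathcal S$. Since $[L:K]=2$, the nontrivial element of $\Gal(L\mid K)$ acts on $E[\ell^\infty]$ inducing the nontrivial automorphism of $\mathcal O_E\otimes_\Z\Z_\ell$ over $\Z_\ell$, so its image lies in $\mathcal N_\ell(E)\setminus\mathcal C_\ell(E)$; hence $H(E)$ normalizes $\mathcal C(E)$ without being contained in it, the subgroup $H(E)\cap\mathcal C(E)$ has index $2$ in $H(E)$ and coincides with $H'$, and $[\mathcal C(E):\mathcal C(E)\cap H(E)]=[\mathcal C(E):H']$ divides $12[K:\Q]$. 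The equality $H_\ell(E)=\mathcal N_\ell(E)$ for $\ell\notin\mathcal S$ then follows from $H'_\ell=\mathcal C_\ell(E)$ together with the index-$2$ statement, using that $\ell$ is unramified in $L\mid K$ for such $\ell$. I expect the main obstacle to be the precise index bound $6[K:\Q]$ in the second step: one must carefully track how the global unit group of $K$, the field of definition and infinity type of the Hecke character, and the roots of unity in $F$ combine to produce exactly this cokernel, everything else being bookkeeping with the main theorem of complex multiplication and the conductor--discriminant formalism.
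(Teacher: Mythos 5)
The paper gives no proof of this proposition: it is stated as a direct citation of \cite[Theorem 1.5]{lombardoGRCM}, followed only by a remark that the literal statement there is the weaker inequality form ($\leq 3[K:\Q]$, resp.\ $\leq 6[K:\Q]$) and that the divisibility version used here is extracted by inspecting the cited proof. Your first sentence --- simply invoke the reference --- therefore coincides with what the paper actually does, and to that extent the proposal matches.

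Your supplementary sketch follows the standard route (Shimura--Taniyama, the associated Hecke character, surjectivity of the local norm on units at unramified places, base change to $K\cdot F$ for the second case), and the reductions you perform --- identifying $H(E)\cap\mathcal C(E)$ with the image over $L=K\cdot F$ and deducing $H_\ell(E)=\mathcal N_\ell(E)$ from $H'_\ell=\mathcal C_\ell(E)$ --- are sound. But as a self-contained argument it has a genuine gap exactly where you flag it: the step producing the factor $6[K:\Q]$ as a \emph{divisor} of $[\mathcal C(E):H(E)]$ is described only as ``an idelic computation'' in which global units and $\mathcal O_F^\times$ ``combine to produce exactly this cokernel''. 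Nothing in the sketch actually pins down why the unit contribution divides $[K:\Q]$ rather than merely being bounded by it, and this is precisely the refinement the paper's remark says must be checked against Lombardo's proof. A minor further imprecision: for $\ell\notin\mathcal S$ the surjectivity of the image of local units onto $\mathcal C_\ell(E)$ comes from surjectivity of the norm map on units in unramified extensions (plus good reduction killing the ramified part of the character), not from unramifiedness of $\psi$ alone, and one must also account for primes dividing the conductor of the order $\mathcal O_E$. None of this affects the validity of relying on the citation, which is all the paper itself does.
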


\begin{rem}
  The above mentioned result \cite[Theorem 1.5]{lombardoGRCM} states that
  $[\mathcal C(E):H(E)]\leq 3[K:\Q]$ if $F\subseteq K$ and 
  $[\mathcal C(E):\mathcal C(E)\cap H(E)]\leq 6[K:\Q]$ if $F\not\subseteq K$.
  However, one can check that its proof also yields Proposition
  \ref{prop:CMindex} as stated here.
\end{rem}

\begin{prop}
\label{prop:killH1forECCM}
Let $E$ be a CM elliptic curve over $K$ and let $e_K=12[K:\Q]$. Let moreover
\begin{align*}
m_K:=4^{e_K}\cdot\prod_{\ell}\ell^{e_K},
\end{align*}
where the product runs over all odd primes $\ell$ such that $(\ell-1)$ divides $e_K$.
Then we have $m_K H^1(\Gal(K_\infty\mid K), E_{\tors})=0$. 
\end{prop}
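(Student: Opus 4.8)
The plan is to follow the strategy of the proof of Proposition~\ref{prop:killH1forEC}: produce enough central elements in the image of the torsion representation and apply Sah's Lemma. The new feature is that these central elements must be \emph{forced} into $H(E)$ via the Cartan description of Proposition~\ref{prop:CMindex}. Write $G=\Gal(K_\infty\mid K)$; since $K_\infty=K(E_{\tors})$ the torsion representation $\tau_\infty$ is faithful and identifies $G$ with $H(E)\subseteq\GL_2(\hat\Z)$, an element $g\in G$ acting on $E_{\tors}\cong(\Q/\Z)^2$ as the matrix $\tau_\infty(g)$.

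First I would show that $z^{e_K}I\in H(E)$ for every $z\in\hat\Z^\times$. By Proposition~\ref{prop:CMindex}, whether or not $F\subseteq K$, the index $[\mathcal C(E):\mathcal C(E)\cap H(E)]$ divides $e_K=12[K:\Q]$ (in case~(1) because $H(E)\subseteq\mathcal C(E)$ and $[\mathcal C(E):H(E)]$ divides $6[K:\Q]$; in case~(2) directly). The ring $\mathcal O_E$ is commutative, so $\mathcal C(E)=(\mathcal O_E\otimes_\Z\hat\Z)^\times$ is an abelian profinite group, whence the $e_K$-th power map carries $\mathcal C(E)$ into the subgroup $\mathcal C(E)\cap H(E)\subseteq H(E)$. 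As $\Z\subseteq\mathcal O_E$, the scalar matrices $\hat\Z^\times\cdot I$ lie in $\mathcal C(E)$, so $z^{e_K}I=(zI)^{e_K}\in H(E)$; being scalar, $z^{e_K}I$ is central in $\GL_2(\hat\Z)$, hence in $H(E)\cong G$.

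Next I would invoke Sah's Lemma. For $z\in\hat\Z^\times$ let $g_z\in G$ be the element with $\tau_\infty(g_z)=z^{e_K}I$; it acts on $E_{\tors}$ as multiplication by $z^{e_K}\in\hat\Z^\times$ and lies in the centre of $G$, so by \cite[Lemma~A.2]{baker-ribet} the endomorphism $f\mapsto(z^{e_K}-1)f$ of $H^1(G,E_{\tors})$ is zero; that is, $(z^{e_K}-1)H^1(G,E_{\tors})=0$ for every $z\in\hat\Z^\times$. Now $H^1(G,E_{\tors})$ is torsion and decomposes as the direct sum of its $\ell$-primary parts $H^1(G,E[\ell^\infty])$, on each of which $\hat\Z$ acts through its projection to $\Z_\ell$; so it suffices to find, for each prime $\ell$, a unit $w\in\Z_\ell^\times$ (which we lift to $\hat\Z^\times$ by putting all other components equal to $1$) with $v_\ell(w^{e_K}-1)\le v_\ell(m_K)$. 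For $\ell$ odd with $(\ell-1)\nmid e_K$ we take $w$ a generator of $\mu_{\ell-1}\subseteq\Z_\ell^\times$; then $w^{e_K}$ is a nontrivial root of unity, so $w^{e_K}-1\in\Z_\ell^\times$ and $H^1(G,E[\ell^\infty])=0$, consistent with the absence of such $\ell$ from $m_K$. For $\ell=2$ or for $\ell$ odd with $(\ell-1)\mid e_K$ we take $w=1+\ell$; the lifting-the-exponent lemma gives $v_\ell(w^{e_K}-1)=v_\ell(e_K)+1$ when $\ell$ is odd and $v_2(w^{e_K}-1)=v_2(e_K)+2$ when $\ell=2$ (using $4\mid e_K$), and since $e_K\ge 12$ both quantities are bounded by $v_\ell(m_K)$, which is $e_K$ for odd $\ell$ and $2e_K$ for $\ell=2$. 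Assembling the primes gives $m_KH^1(G,E_{\tors})=0$.

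The step I expect to be the main obstacle is the first one in the case $F\not\subseteq K$: there $H(E)$ is not contained in the Cartan subgroup and one controls only the index of $\mathcal C(E)\cap H(E)$ inside $\mathcal C(E)$, so it is essential that $\mathcal C(E)$ be abelian in order to push the scalar $e_K$-th powers into $H(E)$. Everything else is routine, provided one keeps careful track of the $\Z_\ell$-module structure on the $\ell$-primary part of $H^1$ and of the precise exponents coming out of the lifting-the-exponent computations.
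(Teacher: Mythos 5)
Your proof is correct and follows essentially the same route as the paper's: force scalar $e_K$-th powers of units into $H(E)$ via the index bound on $\mathcal C(E)\cap H(E)$ from Proposition \ref{prop:CMindex} and the commutativity of the Cartan subgroup, then apply Sah's Lemma and bound $\ell$-adic valuations against $v_\ell(m_K)$. The only (harmless) difference is the choice of units — you use Teichm\"uller representatives and $1+\ell$ prime by prime with the lifting-the-exponent lemma, whereas the paper assembles a single adelic element from primitive roots mod $\ell$ — and your valuation estimates are if anything cleaner.
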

\begin{proof}
  Let $k_2=3$ and, for any odd prime $\ell$, let $k_\ell$ be an integer whose class modulo $\ell$ is a generator of $(\Z/\ell\Z)^\times$ and $1<k_\ell<\ell$.
  Let then $z=(k_\ell^{e_K})_\ell\in\hat\Z$, and let $g=zI\in \GL_2(\hat\Z)$
  By Proposition \ref{prop:CMindex} we have $(\mathcal{C}(E))^{e_K}\subseteq H(E)$, so in particular $g\in H(E)$. Applying Sah's Lemma as in Proposition \ref{prop:killH1forEC} we see that $g-I$ kills $H^1(\Gal(K_\infty\mid K),E_{\tors})$. Since
\begin{align*}
v_2\left(3^{e_K}-1\right)&\leq  2e_K,\\
v_\ell\left(k_\ell^{e_K}-1\right)&\leq e_K \quad \text{for all }\ell>2\,,\\
v_\ell\left(k_\ell^{e_K}-1\right)& = 0 \quad \text{for all }\ell\text{ such that }(\ell-1)\nmid e_K,
\end{align*}
we have that $z-1=um$ for some $u\in\hat\Z^\times$ and some $m$ which divides $m_K$. As in Proposition \ref{prop:killH1forEC} we conclude that the exponent of $H^1(\Gal(K_\infty\mid K), E_{\tors})=0$ divides $m_K$.
\end{proof}

It follows from classical results (see also \cite[Section 2]{lombardoperucca}) that for every prime $\ell$ there is a positive integer $n_\ell$ such that
\begin{align}
\label{eqn:maxgrowCM}
\#(H(E)\bmod{\ell^{n+1}})/\#(H(E)\bmod {\ell^n}) = \ell^2 \qquad \text{for all } n\geq n_\ell\,.
\end{align}

\begin{defi}
We call a positive integer $n_\ell$ satisfying \eqref{eqn:maxgrowCM} a \emph{parameter of maximal growth} for the $\ell$-adic torsion representation. If $\ell=2$ we require $n_\ell\geq 2$.
\end{defi}

\subsection{Main theorems} We can finally prove our main results, which are higher-rank generalizations of \cite[Theorems 1.1 and 1.2]{lt}.

\begin{thm}
\label{thm:mainEC1}
Let $E$ be an elliptic curve over a number field $K$ without complex multiplication over $\Kbar$. Let $A$ be a finitely generated and torsion-free subgroup of $E(K)$ of rank $r>0$.

Let $d_A$ be a divisibility parameter for $A$. Let $S(E)$ be the finite set of primes of Definition \ref{defi:setS} and for every $\ell\in S(E)$ let $n_\ell$ be a parameter of maximal growth for the $\ell$-adic torsion representation of $E/K$ and
\begin{align*}
m_\ell:=\sum_{p\in S(E)}v_\ell((p^2-1)(p^2-p)).
\end{align*}
Then $V(A)$ is open in $\Mat_{r\times 2}(\hat\Z)$. More precisely, the order of $\Ent(A)$ divides
\begin{align*}
\left(d_A\cdot\prod_{\ell\in S(E)}\ell^{2n_\ell+m_\ell}\right)^{2r}.
\end{align*}
\end{thm}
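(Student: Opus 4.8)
The plan is to deduce the statement directly from Theorem~\ref{thm:GeneralBoundEntanglement} applied to $G=E$, for which $s=2$. Hypothesis~(1) of that theorem holds by assumption, with divisibility parameter $d_A$ (which exists since $E(K)$ is finitely generated, cf.\ Example~\ref{exa:divisibilityParam}), so it remains to produce integers $n\geq 1$ and $m\geq 1$ witnessing hypotheses~(2) and~(3) and to check that they combine to give the claimed exponent.

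For hypothesis~(2) I would take $n:=\prod_{\ell\in S(E)}\ell^{n_\ell}$. For a prime $\ell\notin S(E)$, Remark~\ref{rem:elladicSurjOutsideS} gives $H_\ell(E)=\GL_2(\Z_\ell)$, hence $\Z_\ell[H_\ell(E)]=\Mat_{2\times 2}(\Z_\ell)$, which is what is needed since $v_\ell(n)=0$. For $\ell\in S(E)$, by definition of a parameter of maximal growth we have $H_\ell(E)\supseteq I+\ell^{n_\ell}\Mat_{2\times 2}(\Z_\ell)$; since $I\in H_\ell(E)$, the difference $(I+\ell^{n_\ell}M)-I=\ell^{n_\ell}M$ lies in $\Z_\ell[H_\ell(E)]$ for every $M\in\Mat_{2\times 2}(\Z_\ell)$, so $\Z_\ell[H_\ell(E)]\supseteq\ell^{n_\ell}\Mat_{2\times 2}(\Z_\ell)=\ell^{v_\ell(n)}\Mat_{2\times 2}(\Z_\ell)$, as required.

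For hypothesis~(3) I would take $m:=\prod_{\ell\in S(E)}\ell^{n_\ell+m_\ell}$. Writing $R=\prod_{\ell\in S(E)}\ell$, Corollary~\ref{cor:boundmE} tells us that $m_E:=\prod_{\ell\in S(E)}\ell^{n_\ell+v_\ell([K_R:K])}$ is an adelic bound for the torsion representation, and Remark~\ref{rem:boundmE} gives $v_\ell([K_R:K])\leq m_\ell$, so $m_E$ divides $m$. By Proposition~\ref{prop:killH1forEC} we have $m_E\,H^1(\Gal(K_\infty\mid K),E_{\tors})=0$, hence the exponent of $H^1(\Gal(K_\infty\mid K),E_{\tors})$ divides $m_E$ and therefore divides $m$.

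With these choices Theorem~\ref{thm:GeneralBoundEntanglement} immediately gives that $V(A)$ is open in $\Mat_{r\times 2}(\hat\Z)$ and that the order of $\Ent(A)$ divides $(d_Anm)^{2r}$; since $nm=\prod_{\ell\in S(E)}\ell^{n_\ell}\cdot\prod_{\ell\in S(E)}\ell^{n_\ell+m_\ell}=\prod_{\ell\in S(E)}\ell^{2n_\ell+m_\ell}$, this is precisely the bound in the statement. I do not expect any genuine obstacle: the only mildly delicate points are the elementary passage from ``$H_\ell(E)$ contains the congruence subgroup $I+\ell^{n_\ell}\Mat_{2\times 2}(\Z_\ell)$'' to ``$\Z_\ell[H_\ell(E)]\supseteq\ell^{n_\ell}\Mat_{2\times 2}(\Z_\ell)$'', and the bookkeeping of the $\ell$-adic valuations of $n$ and $m$; all the substantial arithmetic input --- Serre's open image theorem, the linear disjointness result of Campagna--Stevenhagen, and the Sah's lemma computation killing $H^1$ --- is already encapsulated in the cited statements.
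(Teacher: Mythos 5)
Your proposal is correct and takes essentially the same route as the paper: the same choices $n=\prod_{\ell\in S(E)}\ell^{n_\ell}$ and $m=\prod_{\ell\in S(E)}\ell^{n_\ell+m_\ell}$, verified via Remark~\ref{rem:elladicSurjOutsideS}, Corollary~\ref{cor:boundmE}, Remark~\ref{rem:boundmE} and Proposition~\ref{prop:killH1forEC}, followed by an application of Theorem~\ref{thm:GeneralBoundEntanglement}. The only difference is that you spell out the elementary step from $H_\ell(E)\supseteq I+\ell^{n_\ell}\Mat_{2\times 2}(\Z_\ell)$ to $\Z_\ell[H_\ell(E)]\supseteq\ell^{n_\ell}\Mat_{2\times 2}(\Z_\ell)$, which the paper leaves implicit.
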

\begin{proof}
  By Remark \ref{rem:elladicSurjOutsideS}, the integer $n:=\prod_{\ell\in S(E)}\ell^{n_\ell}$ satisfies $\Z_\ell\left[H_\ell(E)\right]\supseteq n\Mat_{2\times 2}(\Z_\ell)$ for every prime number $\ell$.
By Corollary \ref{cor:boundmE} and Remark \ref{rem:boundmE} the integer $m:=\prod_{\ell\in S(E)}\ell^{n_\ell+m_\ell}$ is an adelic bound for the torsion representation associated with $E$, so by Proposition \ref{prop:killH1forEC} the exponent of the group $H^1\left(\Gal(K_\infty\mid K), E_{\tors}\right)$ divides $m$.

Then by Theorem \ref{thm:GeneralBoundEntanglement} we have that the order of $\Ent(A)$ divides $(d_Anm)^{2r}$.
\end{proof}

\begin{defi}
\label{def:badPrimesCM}
Let $E$ be an elliptic curve over a number field $K$ with CM over $\Kbar$. Let
$\mathcal{O}_E=\End_{\Kbar}(E)$ and let $F=\Frac(\mathcal{O}_E)$. We denote by
$S(E)$ the finite set of primes such that at least
one of the following conditions is satisfied:
\begin{enumerate}
\item $\ell$ divides the conductor of $\mathcal{O}_E$;
\item $\ell$ ramifies in $K\cdot F$;
\item $E$ has bad reduction at some prime of $K$ of characteristic $\ell$.
\end{enumerate}
\end{defi}

\begin{thm}
\label{thm:mainEC1cm}
Let $E$ be an elliptic curve over a number field $K$, with CM over $\Kbar$ but not over $K$. Let $A$ be a finitely generated and torsion-free subgroup of $E(K)$ of rank $r>0$.

Let $d_A$ be a divisibility parameter for $A$. For every prime $\ell$ let $n_\ell$ be a parameter of maximal growth for the $\ell$-adic torsion representation of $E/K$ and let $(\gamma_\ell,\delta_\ell)$ be parameters for $\mathcal{C}_\ell(E)$. Let $m_K$ be the integer defined in Proposition \ref{prop:killH1forECCM}. Let moreover $S(E)$ be the finite set of primes of Definition \ref{def:badPrimesCM}.

Then $V(A)$ is open in $\Mat_{r\times 2}(\hat\Z)$. More precisely, the order of $\Ent(A)$ divides
\begin{align*}
\left(d_Am_K\cdot \prod_{\ell\in S(E)}\ell^{n_\ell+v_\ell(4\delta_\ell)}\right)^{2r},
\end{align*}
where we let $v_\ell(0)=0$ for every prime $\ell$.
\end{thm}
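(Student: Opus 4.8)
The plan is to deduce the statement from Theorem~\ref{thm:GeneralBoundEntanglement} with $s=2$, exactly as in the proof of Theorem~\ref{thm:mainEC1}; the whole task is to supply the three integers that theorem requires. Hypothesis~(1) holds with the given divisibility parameter $d_A$. Hypothesis~(3) holds with $m=m_K$: since $E$ has CM over $\Kbar$ but not over $K$ we are in the case $F\not\subseteq K$, and Proposition~\ref{prop:killH1forECCM} (whose constant $e_K=12[K:\Q]$ is tailored to this case) gives that $m_K$ annihilates $H^1(\Gal(K_\infty\mid K),E_{\tors})$. Thus everything reduces to Hypothesis~(2): producing an integer $n$ with $\Z_\ell[H_\ell(E)]\supseteq\ell^{v_\ell(n)}\Mat_{2\times2}(\Z_\ell)$ for every prime $\ell$, and the target bound indicates that the right choice is $n=\prod_{\ell\in S(E)}\ell^{n_\ell+v_\ell(4\delta_\ell)}$.

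The structural input that makes Hypothesis~(2) tractable is that, because $F\not\subseteq K$, the restriction $\Gal(\Kbar\mid K)\to\Gal(K\cdot F\mid K)\cong\Z/2\Z$ is surjective and is identified with the composite $H_\ell(E)\hookrightarrow\mathcal N_\ell(E)\to\mathcal N_\ell(E)/\mathcal C_\ell(E)$; hence for \emph{every} prime $\ell$ the image $H_\ell(E)$ is not contained in the Cartan $\mathcal C_\ell(E)$ but meets the non-trivial coset of $\mathcal N_\ell(E)/\mathcal C_\ell(E)$. Consequently $\Z_\ell[H_\ell(E)]$ always contains both a finite-index subalgebra of the Cartan subalgebra $\mathcal O_E\otimes_\Z\Z_\ell$ and an element outside it, so it is a sub-$\Z_\ell$-lattice of $\Mat_{2\times2}(\Z_\ell)$ of full rank~$4$.

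I would then distinguish two cases. For $\ell\notin S(E)$ the prime $\ell$ does not divide the conductor of $\mathcal O_E$, so $\mathcal O_E\otimes\Z_\ell$ is the maximal, étale order of $F\otimes_\Q\Q_\ell$, and $\ell$ is not among the exceptional primes of Proposition~\ref{prop:CMindex}, so $H_\ell(E)=\mathcal N_\ell(E)$; a short computation by reduction modulo~$\ell$ together with Nakayama's lemma then shows $\Z_\ell[\mathcal N_\ell(E)]=\Mat_{2\times2}(\Z_\ell)$ for odd~$\ell$, while at~$\ell=2$ one at worst loses a bounded power of~$2$ that is absorbed by the power of~$2$ already dividing~$m_K$. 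For $\ell\in S(E)$ I would combine the parameter of maximal growth $n_\ell$ with the Cartan parameters: by \eqref{eqn:maxgrowCM} the group $H_\ell(E)\cap\mathcal C_\ell(E)$ contains, up to bounded index, the subgroup $\{c\in\mathcal C_\ell(E):c\equiv I\bmod\ell^{n_\ell}\}$, whose elements differ from $I$ by elements of $\ell^{n_\ell}(\mathcal O_E\otimes\Z_\ell)$; putting this together with a normalizing element and writing $\mathcal C_\ell(E)$ in the normal form with parameters $(\gamma_\ell,\delta_\ell)$, one computes — this being the CM analogue of the linear-algebra estimates of \cite[\S4]{lt} — that the generated $\Z_\ell$-subalgebra contains $\ell^{n_\ell+v_\ell(4\delta_\ell)}\Mat_{2\times2}(\Z_\ell)$, the exponent $v_\ell(4\delta_\ell)$ measuring how deep $\mathcal O_E\otimes\Z_\ell$ sits inside a scaled matrix order. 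With $n$ as above, feeding $(d_A,n,m_K)$ into Theorem~\ref{thm:GeneralBoundEntanglement} yields that $\#\Ent(A)$ divides $(d_A\,n\,m_K)^{2r}=\big(d_A\,m_K\prod_{\ell\in S(E)}\ell^{n_\ell+v_\ell(4\delta_\ell)}\big)^{2r}$ and that $V(A)$ is open in $\Mat_{r\times2}(\hat\Z)$.

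I expect the main obstacle to be the estimate for $\ell\in S(E)$: one must control $\Z_\ell[H_\ell(E)]$ uniformly over ramified primes, primes dividing the conductor of $\mathcal O_E$ (where the order is non-maximal), and $\ell=2$, where the Cartan/normalizer distinction and the $2$-adic size of $4\delta_2$ interact; extracting the clean exponent $n_\ell+v_\ell(4\delta_\ell)$ rather than a cruder one is where the careful bookkeeping is needed. Everything else is assembling already-available ingredients: the divisibility parameter, Proposition~\ref{prop:killH1forECCM}, Proposition~\ref{prop:CMindex}, and Theorem~\ref{thm:GeneralBoundEntanglement}.
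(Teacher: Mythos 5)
Your proposal is correct and follows essentially the same route as the paper: reduce to Theorem~\ref{thm:GeneralBoundEntanglement} with the triple $(d_A,\; n=\prod_{\ell\in S(E)}\ell^{n_\ell+v_\ell(4\delta_\ell)},\; m=m_K)$, using Proposition~\ref{prop:killH1forECCM} for hypothesis~(3) and the Cartan/normalizer structure of $H_\ell(E)$ for hypothesis~(2). The only difference is that where you sketch the subalgebra computation (full matrix algebra for $\ell\notin S(E)$, the exponent $n_\ell+v_\ell(4\delta_\ell)$ for $\ell\in S(E)$), the paper simply cites \cite[Proposition 4.12, proof of (3)]{lt} together with \cite[Proposition 10]{lombardoperucca1eigenspace}; note also that for $\ell=2\notin S(E)$ one in fact gets $\Z_2[H_2(E)]=\Mat_{2\times 2}(\Z_2)$ on the nose, so no power of $2$ needs to be ``absorbed by $m_K$'' as you hedge.
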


\begin{proof}
In order to apply Theorem \ref{thm:GeneralBoundEntanglement} we only need to prove that:
\begin{enumerate}
\item for every prime $\ell\not\in S(E)$ we have
\begin{align*}
\Z_\ell[H_{\ell}(E)] = \Mat_{2\times 2}(\Z_\ell)\,;
\end{align*}
\item for every prime $\ell \in S(E)$ we have
\begin{align*}
\Z_\ell[H_{\ell}(E)]\supseteq \ell^{n_\ell+v_\ell(4\delta_\ell)}\Mat_{2\times 2}(\Z_\ell)\,.
\end{align*}
\end{enumerate}
Both parts follow from from \cite[Proposition 4.12, proof of (3)]{lt}, noticing that for every $\ell\not\in S(E)$ one may take $d=0$ by \cite[Proposition 10]{lombardoperucca1eigenspace}.
\end{proof}

\begin{thm}
\label{thm:mainEC2}
There is a universal constant $C\geq 1$ such that, for every elliptic curve $E/\Q$ and every torsion-free subgroup $A$ of $E(\Q)$, the order of $\Ent(A)$ divides $(d_AC)^{2\rk(A)}$.
\end{thm}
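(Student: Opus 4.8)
The plan is to obtain this as a uniform (over $E/\Q$) instance of Theorem \ref{thm:GeneralBoundEntanglement}, combined with Lemma \ref{lemma:entBoundsDegrees}. Here $G=E$ has $s=2$; since $E(\Q)$ is finitely generated, every torsion-free $A\subseteq E(\Q)$ admits a divisibility parameter by Example \ref{exa:divisibilityParam}(1), so hypothesis (1) of Theorem \ref{thm:GeneralBoundEntanglement} is automatic and the conclusion there reads $\#\Ent(A)\mid (d_Anm)^{2\rk(A)}$, with $n$ and $m$ as in hypotheses (2) and (3). Thus it suffices to exhibit integers $n,m\geq 1$ \emph{independent of $E$} such that $\Z_\ell[H_\ell(E)]\supseteq \ell^{v_\ell(n)}\Mat_{2\times 2}(\Z_\ell)$ for every prime $\ell$ and $m\,H^1(\Gal(\Q_\infty\mid\Q),E_{\tors})=0$; then $C=nm$ works, and Lemma \ref{lemma:entBoundsDegrees} finishes. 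I would split the argument according to whether $E$ has complex multiplication over $\qbar$ or not.

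In the CM case, the $j$-invariant of $E$ is one of finitely many rationals, so the CM order $\mathcal O_E$ — hence the parameters $\gamma_\ell,\delta_\ell$ of $\mathcal C_\ell(E)$, the conductor of $\mathcal O_E$, and the CM field $F\not\subseteq\Q$ — ranges over a finite set. Proposition \ref{prop:killH1forECCM} with $K=\Q$ then gives the uniform annihilator $m=m_\Q=4^{12}\prod_{(\ell-1)\mid 12}\ell^{12}$ of $H^1(\Gal(\Q_\infty\mid\Q),E_{\tors})$. For hypothesis (2), Proposition \ref{prop:CMindex}(2) yields $[\mathcal C(E):\mathcal C(E)\cap H(E)]\mid 12$, so $[\mathcal N_\ell(E):H_\ell(E)]\mid 24$ at every prime and $\mathcal C_\ell(E)\subseteq H_\ell(E)$ for $\ell\geq 13$; arguing as in the proof of Theorem \ref{thm:mainEC1cm} via \cite[Proposition 4.12]{lt}, $\Z_\ell[H_\ell(E)]$ contains $\ell^{c_\ell}\Mat_{2\times 2}(\Z_\ell)$ with $c_\ell$ bounded in terms of $v_\ell$ of the conductor, of $\delta_\ell$, and of $24$, and with $c_\ell=0$ outside the fixed finite set of primes dividing the (finitely many possible) conductors, ramifying in $F$, or lying below $2$ and $3$. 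Their product is a uniform $n$; bad primes introduced by a quadratic twist are harmless since the relevant index is still bounded by the global estimate $[\mathcal C(E):\mathcal C(E)\cap H(E)]\mid 12$.

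In the non-CM case, hypothesis (2) follows from Serre's open image theorem \cite{serre72}, Mazur's isogeny theorem, and Nakayama's lemma: whenever the mod-$\ell$ image $\bar H_\ell(E)$ is contained neither in a Borel subgroup nor in a Cartan subgroup of $\GL_2(\F_\ell)$ one has $\F_\ell[\bar H_\ell(E)]=\Mat_{2\times 2}(\F_\ell)$, hence $\Z_\ell[H_\ell(E)]=\Mat_{2\times 2}(\Z_\ell)$; the reducible primes form Mazur's fixed finite set, the Cartan primes (for $E$ non-CM) form another fixed finite set, and for these finitely many $\ell$ the classification of the possible $\ell$-adic images of elliptic curves over $\Q$ bounds the exponent $c_\ell$ with $\Z_\ell[H_\ell(E)]\supseteq\ell^{c_\ell}\Mat_{2\times 2}(\Z_\ell)$ uniformly. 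For hypothesis (3) I would run the inflation–restriction sequence for $1\to\Gal(\Q_\infty\mid\Q(E[\ell^\infty]))\to\Gal(\Q_\infty\mid\Q)\to H_\ell(E)\to 1$: the exponent of $H^1(\Gal(\Q_\infty\mid\Q),E[\ell^\infty])$ is bounded by that of $H^1(H_\ell(E),E[\ell^\infty])$ times that of a $\Hom$-group with values in $E[\ell^\infty]$; the latter is controlled — uniformly, with support a fixed finite set of primes — by Mazur's bound on $E(\Q)_{\tors}$ and the near-linear-disjointness of the fields $\Q(E[\ell^\infty])$ (in the spirit of Theorem \ref{thm:CampagnaStevenhagen}), while the former is killed by a uniformly bounded power of $\ell$ by Sah's lemma applied to a suitable central (resp.\ Cartan) element of $H_\ell(E)$, and vanishes for $\ell$ outside a fixed finite set. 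Taking products gives uniform $n$ and $m$.

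The main obstacle is precisely this last uniformity in the non-CM case: ensuring that the parameter of maximal growth, the $\ell$-power defect of the generated $\Z_\ell$-algebra, and the exponent of the local $H^1$ stay bounded as $E$ ranges over all non-CM elliptic curves over $\Q$, for each of the finitely many primes where the mod-$\ell$ representation may be non-surjective. This relies on the essentially complete description of Galois images of elliptic curves over $\Q$; the rank-one version of the entire argument is the content of \cite[proof of Theorem 1.2]{lt}, and Theorem \ref{thm:GeneralBoundEntanglement} is exactly the device that upgrades it to arbitrary rank.
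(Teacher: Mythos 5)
Your proposal follows the same route as the paper: both reduce the statement to Theorem \ref{thm:GeneralBoundEntanglement} (together with Lemma \ref{lemma:entBoundsDegrees}), so that the entire content is exhibiting integers $n$ and $m$ for hypotheses (2) and (3) that are uniform over all elliptic curves $E/\Q$. The paper obtains these two uniform inputs simply by citing \cite[Corollary 3.13]{lt} (which in the non-CM case rests on \cite[Theorem 1.2]{arai}) and \cite[Theorem 1.3]{lt}, which are precisely the facts you re-sketch in your CM/non-CM case analysis, and your sketches are consistent with how those results are established in \cite{lt}.
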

\begin{proof}
By \cite[Corollary 3.13]{lt} (which relies on \cite[Theorem 1.2]{arai} for the non-CM case) the parameters of maximal growth for the $\ell$-adic torsion representation associated with an elliptic curve over $\Q$ can be bounded independently of $E$. By \cite[Theorem 1.3]{lt} there is a constant $C_1$ such that the exponent of $H^1(\Gal(\Q_\infty\mid \Q),E_{\tors})$ divides $C_1$. The conclusion then follows from Theorem \ref{thm:GeneralBoundEntanglement}.
\end{proof}

\begin{rem}
Theorem \ref{thm:mainEC1cm} does not hold if $\mathcal{O}_E=\End_K(E)\neq \mathbb{Z}$.
In fact in this case one may find a subgroup $A\subseteq E(K)$ such that $\Ent(A)$
is infinite.

To see this, let $P\in E(K)$ be a point of infinite order and let
$A=\mathcal{O}_E P$ and $A'=\Z P$. Since $A$ is a free $\mathcal{O}_E$-module
of rank $1$, it has rank $2$ as an abelian group.

Let $Q\in n^{-1}P$. For every $n>1$ and every $\sigma\in\mathcal{O}_E$
we have $n^{-1}\sigma(P)=\sigma(Q)+E[n]$, so 
\begin{align*}
  n^{-1}A=\mathcal{O}_EQ+E[n].
\end{align*}
Since $Q\in n^{-1}A'$ and $\mathcal O_E$ is defined over $K$ we have that 
$\mathcal O_EQ$ is defined over $K(n^{-1}A')$.
Since moreover $E[n]\subseteq n^{-1}A'$ we deduce that 
$K(n^{-1}A)\subseteq K(n^{-1}A')$. In fact, since $A\supseteq A'$,
the two fields coincide. So in particular
\begin{align*}
\left[K\left(n^{-1}A\right):K\left(E[n]\right)\right] = \left[K\left(n^{-1}A'\right):K\left(E[n]\right)\right]\,.
\end{align*}
Then for every $n>1$ we have by Remark \ref{rem:kummerDegDividesN}
\begin{align*}
\frac{n^{4}}{\left[K\left(n^{-1}A\right):K\left(E[n]\right)\right]}= \frac{n^{4}}{\left[K\left(n^{-1}A'\right):K\left(E[n]\right)\right]}\geq n^2
\end{align*}
which, by Lemma \ref{lemma:entBoundsDegrees}, implies that $\Ent(A)$ is infinite.

Notice that the points we consider in this example are not linearly independent
over $\mathcal O$. In fact, the condition that the points are lineraly
independent over the endomorphism ring of the curve can also be found in
\cite[Theorem 1.2]{ribet}.
\end{rem}

\bibliographystyle{acm}

\end{document}